\documentclass[11pt]{article}
\usepackage[utf8]{inputenc}
\usepackage{times}
\usepackage{amsmath, amssymb, amsthm}
\usepackage{graphicx}
\usepackage[letterpaper, margin=0.75in]{geometry}
\usepackage{subcaption}
\usepackage{xparse}
\usepackage{float}
\usepackage{xcolor}
\usepackage{enumerate}
\usepackage[colorlinks=true, linkcolor=blue, citecolor=blue, urlcolor=blue]{hyperref}
\usepackage[none]{hyphenat}
\usepackage{dsfont}
\usepackage{booktabs}
\usepackage{tikz}
\usepackage{cite}

\numberwithin{equation}{section}
\newtheorem{lemma}{Lemma}[section]
\newtheorem{theorem}{Theorem}[section]

\newtheorem{remark}{Remark}
\newtheorem{corollary}{Corollary}[section]

\allowdisplaybreaks

\title{Domain decomposition methods for the Stokes-Biot model of fluid-poroelastic structure interaction}
\author{Manraj Ghumman\thanks{Department of Mathematics, University of Pittsburgh, Pittsburgh, PA 15260, USA, email: msg85@pitt.edu.}
  \and Manu Jayadharan\thanks{Engineering Sciences \& Applied Mathematics, Northwestern University, 2145 Sheridan Road, Evanston, IL 60208, email: manu.jayadharan@northwestern.edu.}
\and Ivan Yotov\thanks{Department of Mathematics, University of Pittsburgh, Pittsburgh, PA 15260, USA, email: yotov@math.pitt.edu. Partially supported by the US National Science Foundation grant DMS-2410686 and the Alexander von Humboldt Foundation via the Humboldt Research Award.}}
\date{\today}

\begin{document}

\newcommand{\bu}{\mathbf{u}}
\newcommand{\bv}{\mathbf{v}}
\newcommand{\bF}{\mathbf{f}}
\newcommand{\bn}{\mathbf{n}}
\newcommand{\fp}{{fp}}
\newcommand{\bD}{\mathbf{D}}
\newcommand{\bg}{\mathbf{g}}
\newcommand{\bsigma}{\boldsymbol{\sigma}}
\newcommand{\bI}{\boldsymbol{\mathrm{I}}}
\newcommand{\bK}{\mathbf{K}}
\newcommand{\disp}{\boldsymbol{\eta}}
\newcommand{\bt}{\mathbf{t}}
\renewcommand{\ij}{{i,j}}
\newcommand{\bjs}{\scriptscriptstyle\mathrm{BJS}}
\newcommand{\bV}{\mathbf{V}}
\newcommand{\rW}{\mathrm{W}}
\newcommand{\fh}{{fh}}
\newcommand{\ph}{{ph}}
\newcommand{\bH}{\mathbf{H}}
\newcommand{\rL}{\mathrm{L}}
\newcommand{\btau}{\boldsymbol{\tau}}
\newcommand{\bL}{\mathbf{L}}
\renewcommand{\dh}{{dh}}
\newcommand{\bxi}{\boldsymbol{\xi}}
\newcommand{\bchi}{\boldsymbol{\chi}}
\newcommand{\bgamma}{\boldsymbol{\gamma}}
\newcommand{\blambda}{\boldsymbol{\lambda}}
\newcommand{\bLambda}{\boldsymbol{\Lambda}}
\newcommand{\bmu}{\boldsymbol{\mu}}
\newcommand{\br}{\mathbf{r}}
\newcommand{\bq}{\mathbf{q}}
\newcommand{\ff}{{ff}}
\newcommand{\pp}{{pp}}
\newcommand{\fhi}{{fh,i}}
\newcommand{\rphi}{{ph,i}}
\newcommand{\rphj}{{ph,j}}
\newcommand{\hij}{{h,i,j}}
\newcommand{\fhj}{{fh,j}}
\newcommand{\ee}{{ee}}
\newcommand{\rH}{\mathrm{H}}
\newcommand{\kai}{\text{ker }a_{f,i}}
\newcommand{\bpsi}{\boldsymbol{\psi}}
\newcommand{\bvarphi}{\boldsymbol{\varphi}}

\newcommand{\<}{\langle}
\renewcommand{\>}{\rangle}

\maketitle

\begin{abstract}
  We develop a non-overlapping domain decomposition method for the numerical solution of the Stokes-Biot model of fluid-poroelastic structure interaction in a mixed form. The model is based on a velocity-pressure formulation for the free fluid, a three-field stress-displacement-rotation formulation with weakly symmetric stress for the solid deformation, and a Darcy velocity-pressure formulation for the fluid in the poroelastic media. Mass conservation, balance of stress, and the Beavers-Joseph-Saffman slip with friction condition are imposed on the interface. The interface conditions are incorporated through Lagrange multipliers modeling the traces of the displacement and the Darcy pressure. The system is discretized using stable mixed finite element spaces for Stokes flow, elasticity, and Darcy flow. The domain is decomposed into a union of subdomains of either Stokes or Biot type with three types of interfaces: Stokes-Stokes, Biot-Biot, and Stokes-Biot. On the Stokes-Stokes interfaces, a normal stress Lagrange multiplier is introduced to impose weakly velocity continuity, while the Biot-Biot and Stokes-Biot interfaces are equipped with displacement and pressure Lagrange multipliers to impose weakly continuity of normal stress and normal velocity, respectively.  The global problem is reduced via Schur complement to an interface problem for the Lagrange multipliers, which is solved by GMRES. Each iteration requires the solution of local Stokes or Biot problems, which can be performed in parallel. We show that the resulting interface operator is positive definite and analyze the convergence of the GMRES iteration through fields-of-value analysis. Numerical experiments are presented to illustrate the performance of the method.
\end{abstract}

\section{Introduction}
In this paper we develop a non-overlapping domain decomposition method for the numerical solution of the Stokes- Biot model of fluid-poroelastic structure interaction (FPSI). The model problem occurs in a wide range of applica- tions, including in biomedicine, the geosciences, and industry, such as fluid flows in the eye, the brain, or the lungs, arterial flows, interaction of surface and subsurface hydrological systems, groundwater flow through deformable or fractured aquifers, and design of industrial filters. In the mathematical model, the free fluid is governed by the Stokes equations, while the flow in the deformable porous media is modeled by the Biot system of poroelasticity. The two regions are coupled through conservation of mass and momentum interface conditions, as well as the Beavers--Joseph--Saffman slip with friction condition. The model exhibits features of the widely studied coupled Stokes-Darcy flows \cite{LSY,DMQ,ErvJenSun,GalSar,ry2005,gos2011,Changqing} and fluid-structure interaction \cite{galdi2010fundamental,bazilevs2013computational,bungartz2006fluid,richter2017fluid}.The FPSI model has attracted increased attention in recent years. Mathematical analysis can be found in \cite{show2005,cesmelioglu2017analysis,ambartsumyan2019nonlinear,Stokes-Biot-eye,fpsi-mixed-elast,fpsi-msfmfe,Bociu-etal-2021,wy2022,augmented}. For the development of discretization methods, splitting methods, and preconditioning techniques we refer the reader to in
\cite{AKYZ,bukavc2015operator,bukavc2015partitioning,Buk-Yot-Zun-fracture,ambartsumyan2019nonlinear,Stokes-Biot-eye,fpsi-mixed-elast,fpsi-msfmfe,Cesm-Chid,HDG-SB,Boon-precond-SB,hyper-SB,augmented,CLR-NSB-HDG,wy2022,badia2009coupling,fpsi-robin-robin}.

Due to the complexity of the FPSI model, the efficient solution of the resulting coupled algebraic system is key for its use in large scale applications. There has been some work in this direction, including operator-splitting methods in \cite{bukavc2015operator,Bukac-JCP}, Nitsche's coupling in \cite{bukavc2015partitioning},
an optimization-based decoupling method in \cite{Cesm-etal-optim}, a second order in time split schemes in \cite{Kunwar-etal,pb2024}, parameter-robust preconditioning in \cite{Boon-precond-SB}, and Robin-Robin partitioned methods in \cite{badia2009coupling,hyper-SB,pb2024,fpsi-robin-robin,SB-robin-robin-parallel}. 

In this paper we develop a new approach for the solution of the algebraic system arising in the discretization of the Stokes-Biot model, which is based on non-overlapping domain decomposition \cite{Toselli-Widlund,QV}. To the best of our knowledge, this approach has not been studied in the literature for FPSI. Domain decomposition methods split the computational domain into multiple non-overlapping subdomains. The underlying partial differential equation (PDE) model is locally discretized in each subdomain and interface continuity conditions are enforced through Lagrange multipliers. The global problem is reduced via Schur complement to an interface problem for the Lagrange multipliers, which can be solved by a Krylov space iterative method. Each iteration requires the solution of local subdomain problems of lower complexity that are easier to solve. This approach naturally leads to scalable algorithms on massively parallel computers with distributed memory. Previous works on domain decomposition methods for mixed finite element discretizations of PDEs that have informed the developments in this paper include methods for Darcy flow \cite{glowinski1988domain,cowsar1995balancing,arbogast2000mixed}, elasticity \cite{EldarElast}, poroelasticity \cite{ManuBiot,ManuMortarBiot}, and Stokes-Darcy flow \cite{Changqing}. We also mention other relevant domain decomposition and preconditioning works on elasticity \cite{Kim-elast-BDDC}, poroelasticity \cite{Ahmed-FS-JCAM,Ahmed-apost-CMAME,FlorezDD,girault2011domain,Boon-precond-Biot}, and Stokes-Darcy flow \cite{Disc-Quart-Valli-2007,Hu-precond-SD,Boon-SD,Boon-etal-robust-SD}.

In this work we consider a mixed formulation for the Stokes-Biot model, which is based on a velocity-pressure Stokes formulation, a three-field stress-displacement-rotation elasticity formulation with weakly symmetric stress, and a velocity-pressure Darcy formulation \cite{fpsi-mixed-elast}. The advantages of this formulation at the discrete level include local momentum and mass conservation in the poroelastic region, robustness with respect to the physical parameters, and locking-free behavior in the almost incompressible regime. The two regions are coupled at the interface via continuity of flux, force and momentum balance, and the Beavers-Joseph-Saffman slip with friction condition. The system is discretized using stable mixed finite element spaces for Stokes flow, elasticity, and Darcy flow. We allow for non-matching grids on the Stokes-Biot interfaces, but for simplicity focus on matching grids on the Stokes-Stokes and Biot-Biot interfaces. The domain is decomposed into a union of subdomains of either Stokes or Biot type, resulting in three types of interfaces: Stokes-Stokes, Biot-Biot, and Stokes-Biot. A normal stress Lagrange multiplier is introduced on the Stokes-Stokes interfaces to impose weakly velocity continuity. On the Biot-Biot and Stokes-Biot interfaces, displacement and pressure Lagrange multipliers are introduced to impose weakly continuity of normal stress and normal velocity, respectively. We use Nitsche's method to impose weakly the Dirichlet boundary conditions in the Stokes region. This allows us to define the normal stress Lagrange multiplier space on the Stokes-Stokes interfaces as the trace of the Stokes velocity space, without the need to modify it at the intersection points with the outer Dirichlet boundary. The displacement and pressure Lagrange multiplier spaces on the Biot-Biot and Stokes-Biot interfaces are chosen to be the normal traces of the solid stress and the Darcy velocity spaces, respectively. 
The global system is reduced to an interface problem for the Lagrange multipliers by forming the Schur complement. The interface operator involves solving Stokes or Biot subdomain problems, which can be performed in parallel. It can be considered as a generalized Steklov-Poincar\'e operator. The Stokes subdomain problems take normal fluid stress Neumann data on the interfaces and return the computed velocity, while the Biot subdomain problems take displacement and pressure Dirichlet data and return the computed normal solid stress and Darcy flux. The interface operator gives the jump in velocity on the Stokes-Stokes interfaces, the jump in normal solid stress and normal flux on the Biot-Biot interfaces, as well as the jump in flux and momentum and the tangential slip on the Stokes-Biot interfaces. When a Krylov space method is employed for the the solution of the interface problem, each iteration requires computing the action of the interface operator, i.e., the solution of Stokes or Biot subdomain problems. We prove that the interface operator is positive definite, despite being non-symmetric, which allows us to employ GMRES for its solution. Furthermore, we derive lower and upper spectral bounds and analyze the convergence of the GMRES iteration through fields-of-value estimates. The analysis is based on characterizing the norm induced by the interface bilinear form in terms of norms of the discrete extensions computed through the subdomain problems and establishing discrete extension and trace inequalities. The use of Nitsche's method for the Stokes Dirichlet boundary conditions presents a significant technical challenge, due to the presence of boundary terms scaled by $h^{-1}$. To deal with this, we use a modified scaled norm for the Stokes velocity induced by the Stokes bilinear form and construct a special discrete Stokes subdomain extension that vanishes on the Dirichlet boundary. 

The rest of the paper is organized as follows. The model problem is presented in Section~\ref{sec:model}, followed by its monolithic mixed finite element discretization in Section~\ref{sec:mfe}. Section~\ref{sec:dd} is devoted to the development and analysis of the domain decomposition algorithm. Numerical experiments illustrating the performance of the method are presented in Section~\ref{sec:numer}. We end with conclusions and possible extensions in Section~\ref{sec:concl}. 

\section{Model problem}\label{sec:model}

\begin{minipage}[c]{0.65\textwidth}
We use the Stokes equations to model the fluid motion in the free fluid region and the steady-state Biot equations to model the poroelastic region. The analysis extends directly to the unsteady quasistatic Biot model, since its backward Euler discretization yields the steady-state model. Let $\Omega$ be an open and bounded domain in $\mathbb{R}^d$, $d = 2\text{ or } 3$, and $\overline\Omega = \overline\Omega_f\cup\overline\Omega_p$ where $\Omega_f$ is the fluid region and $\Omega_p$ is the poroelastic region. Let $\Gamma_{\fp} = \partial\Omega_f\cap\partial\Omega_p$ denote the interface between the Stokes and Biot regions. Let $\partial\Omega = \Gamma_f\cup\Gamma_p$ where $\Gamma_f = \partial\Omega_f\setminus\Gamma_{\fp}$ denotes the external Stokes boundary and $\Gamma_p = \partial\Omega_p\setminus\Gamma_{\fp}$ denotes the external boundary for the poroelastic region. The computational domain is shown in Figure~\ref{fig:domain}.
\end{minipage}
\hfill
\begin{minipage}[c]{0.35\textwidth}
  \centering
    \includegraphics[width=.9\textwidth]{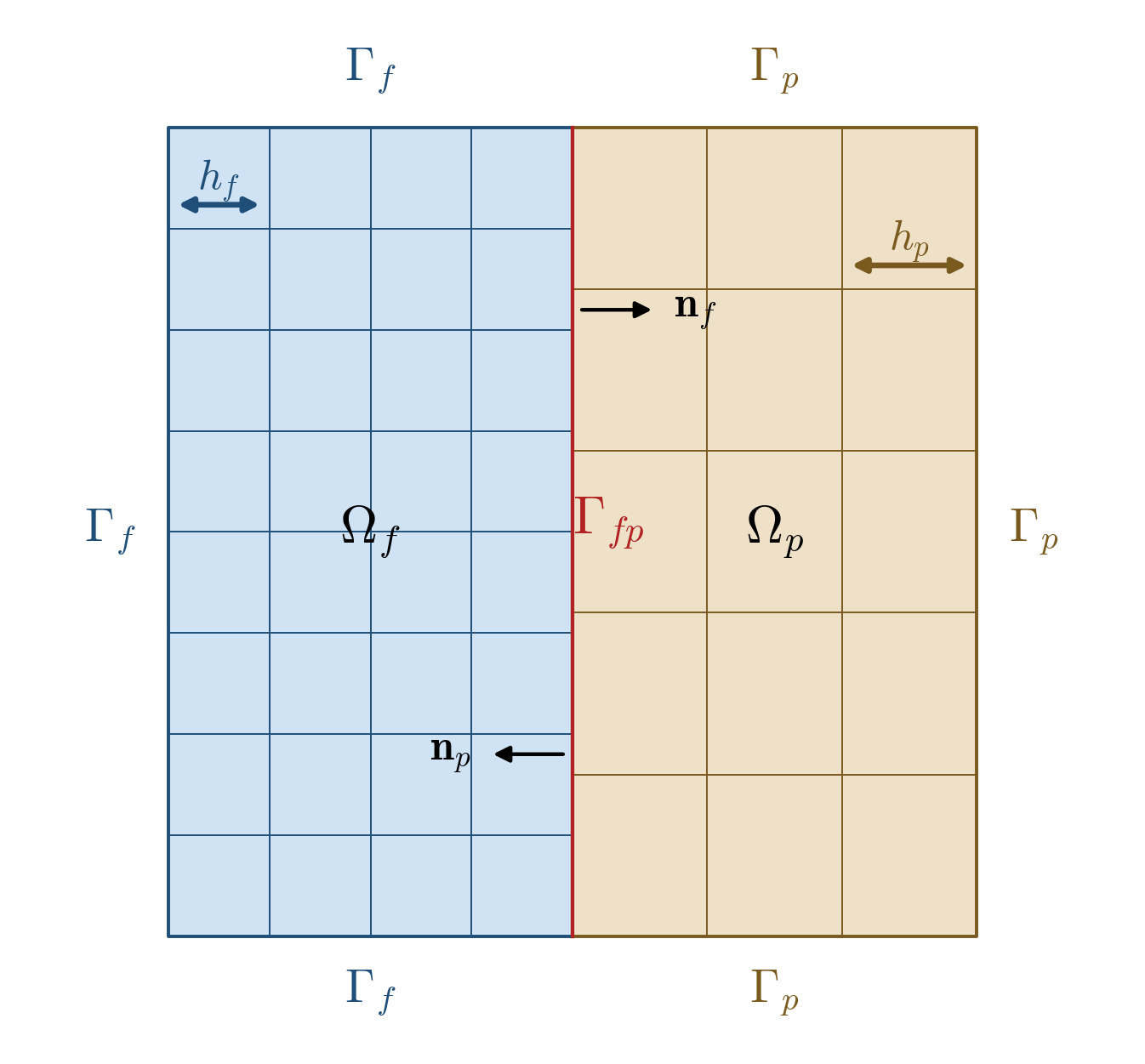}
  \captionof{figure}{Computational domain.}
\label{fig:domain}
\end{minipage}

\subsection{Stokes problem}
The free fluid in $\Omega_f$ is governed by the Stokes equations
\begin{subequations}\label{e2.1}
\begin{align}
    -\nabla\cdot \bsigma_f &= \bF_f,\quad \nabla\cdot \bu_f = q_f\quad \text{in } \Omega_f,\label{e2.1a}\\
    \bu_f &= \bg_f^D\quad \text{on }\Gamma_f^D,\label{e2.1b}\\
    \bsigma_f\bn_f &= \bg_f^N\quad \text{on }\Gamma_f^N,\label{e2.1c}
\end{align}
\end{subequations}
where $\bu_f$ is the fluid velocity, $p_f$ is the fluid pressure, $\bsigma_f = -p_f\bI+2\mu\bD(\bu_f)$ is the stress tensor. Here $\bD(\bv) = \frac{1}{2}(\nabla \bv + \nabla\bv^\mathrm{T})$ and $\mu > 0$ is the fluid viscosity. In addition, $\bF_f$ is a fluid body force, $q_f$ is an external source or sink, and $\bg_f^D$, $\bg_f^N$ are the Dirichlet and Neumann boundary conditions on the external Stokes boundaries $\Gamma_f^D$ and $\Gamma_f^N$, respectively, with $\Gamma_f = \Gamma_f^D\cup \Gamma_f^N$. To avoid non-uniqueness issues, we assume that $|\Gamma_f^D|> 0$.

\subsection{Biot problem}
The poroelastic region is governed by the steady-state Biot system 
\begin{subequations}\label{e2.2}
\begin{align}
    -\nabla \cdot \bsigma_p = \bF_p,\quad \mu\bK^{-1}&\bu_p +\nabla p_p = \boldsymbol{0},\quad s_0 p_p + \alpha \nabla \cdot \disp_p+\nabla \cdot \bu_p = q_p\quad \text{in }\Omega_p,\label{e2.2a}\\
    &p_p = g_p^{D_p}\quad \text{on }\Gamma_p^{D_p}, \quad \bu_p\cdot \bn_p = 0\quad \text{on }\Gamma_p^{N_v},\label{e2.2b}\\
    &\disp_p = \bg_p^{D_d}\quad \text{on }\Gamma_p^{D_d},\quad \bsigma_p\bn_p = \boldsymbol{0}\quad \text{on }\Gamma_p^{N_s}\label{e2.2c}.
\end{align}
\end{subequations}
Here $\bu_p$ is the Darcy velocity, $p_p$ is the Darcy pressure, $\disp_p$ is the displacement, $s_0> 0$ is the mass storativity coefficient, $0< \alpha\leq 1$ is the Biot-Willis constant, and $\bsigma_p$ is the poroelastic stress tensor, with
\begin{align}\label{e2.3}
    \bsigma_p = \bsigma_e - \alpha p_p\bI,\quad A\bsigma_e = \bD(\disp_p),
\end{align}
where $\boldsymbol{\sigma}_e$ is the elastic stress tensor and $A:\mathbb{S}\rightarrow\mathbb{S}$ is the compliance tensor. We assume that $A$ is a symmetric, uniformly positive definite, and bounded operator on the space $\mathbb{S}$ of $d\times d$ symmetric matrices, and that it can be extended to the space $\mathbb{M}$ of general $d\times d$ matrices. The first equation in \eqref{e2.3} models the poroelastic stress due to fluid pressure and the elastic body, and the second equation characterizes the stress-strain relation of an elastic body. In addition, $\bK$ is a symmetric and uniformly positive definite and bounded rock permeability tensor. Finally, $\bF_p$ is a structure body force, $q_p$ is a source or sink term, $g_p^{D_p}$ is the Dirichlet boundary data for the Darcy equations on $\Gamma_p^{D_p}$, with 
$\Gamma_p = \Gamma_p^{D_p}\cup\Gamma_p^{N_v}$, and $\bg_p^{D_d}$ is the Dirichlet boundary data for the elasticity equations on $\Gamma_p^{D_d}$, with $\Gamma_p = \Gamma_p^{D_d}\cup\Gamma_p^{N_s}$. Furthermore, to avoid non-uniqueness issues, we assume that $|\Gamma_p^{D_p}|,\ |\Gamma_p^{D_d}|\ > 0$.

\subsection{Stokes-Biot coupling}
The Stokes and Biot equations are coupled through interface condition on the fluid-poroelastic structure interface $\Gamma_{\fp}$. They are mass conservation, balance of normal components of the stresses, conservation of momentum, and the Beavers–Joseph–Saffman (BJS) condition modeling slip with friction:
\begin{subequations}\label{e2.4}
\begin{align}
    \bu_f\cdot \bn_f\ +\left(\disp_p + \bu_p\right)\cdot \bn_p = 0,\quad -(\bsigma_f\bn_f)\cdot\bn_f = p_p\quad \text{on } \Gamma_{\fp},\label{e2.4a}\\
    \bsigma_f\bn_f + \bsigma_p\bn_p = 0,\quad -(\bsigma_f\bn_f)\cdot\bt_{f,l} = \mu\alpha_{\bjs}\sqrt{\bK^{-1}_l}\left(\bu_f-\disp_p\right)\cdot \bt_{f,l}\quad \text{on }\Gamma_{\fp},\label{e2.4b}
\end{align}
\end{subequations}
where $\bn_f$ and $\bn_p$ are the outward unit normal vectors to $\partial\Omega_f$ and $\partial\Omega_p$ respectively, $\bt_{f,l}$, $l=1,\cdots,d-1$ is an orthonormal set of tangential vectors on $\Gamma_{\fp}$, $\bK_l = \left(\bK\bt_{f,l}\right)\cdot\bt_{f,l}$, and $\alpha_{\bjs}$ is a friction coefficient. 

\section{Mixed finite element method}\label{sec:mfe}
We derive a mixed finite element method for the model problem \eqref{e2.1}, \eqref{e2.2}, \eqref{e2.4}. Assume that $\Omega_f$ and $\Omega_p$ are polygonal ($d=2$) or polyhedral ($d=3$) subdomains. Let $\mathcal{T}^f_{h_f}$ and $\mathcal{T}^p_{h_p}$ be shape regular affine finite element partitions of $\Omega_f$ and $\Omega_p$, respectively, which may be non-matching along the interface $\Gamma_\fp$. Here $h_f,\ h_p$ are the maximum diameters in $\Omega_f,\ \Omega_p$, respectively, and we denote $h = \max\{h_f,\ h_p\}$. We also assume that the trace of $\mathcal{T}^f_{h_f}$ on $\Gamma_f^D$ is quasiuniform, see \cite{ciarlet}.

For a domain $G$ in $\mathbb{R}^d$, the $\rL^2(G)$ inner product and norm for scalar, vector-valued, and tensor-valued functions are denoted by $(\cdot,\cdot)_G$ and $\|\cdot\|_G$, respectively. The norm and seminorm of the Hilbert spaces $\rH^k(G)$ are denoted by $\|\cdot\|_{k,G}$ and $|\cdot|_{k,G}$, respectively. For a section of the domain or element boundary $S\subset\mathbb{R}^{d-1}$ we write $\langle\cdot,\cdot\rangle_S$ and $\|\cdot\|_S$ for the $\rL^2(G)$ inner product and norm, respectively. For a scalar-valued space $H$, we use $\bH$ and $\mathbb{H}$ to denote the corresponding vector-valued and tensor-valued spaces, respectively. We also let $C$ denote a generic constant independent of $h$.

\subsection{Stokes variational formulation}\label{s3.1}

In the fluid region $\Omega_f$, let
\begin{align*}
    \bV_f := \bH^1(\Omega_f),\quad \rW_f = \rL^2(\Omega_f).
\end{align*}
Let $\bV_\fh\times \rW_\fh\subset \bV_f\times \rW_f$ be any Stokes mixed finite element spaces satisfying the inf-sup condition
\begin{align}\label{e3.1}
    C\sup_{\bv_\fh\in \bV^{0,\Gamma_f^D}_\fh\setminus \{\boldsymbol{0}\}} \frac{(w_\fh, \nabla\cdot \bv_\fh)_{\Omega_f}}{\|\bv_\fh\|_{1,\Omega_f}}\geq \|w_\fh\|_{\Omega_f},\quad \forall\ w_\fh\in \rW_\fh,
\end{align}
where
\begin{align*}
    \bV_\fh^{0, \Gamma_f^D} = \left\{\bv_\fh\in\bV_\fh\ :\ \bv_\fh = \mathbf{0}\ \text{on }\Gamma_f^D \right\}.
\end{align*}
We note that $\bv_\fh$ in \eqref{e3.1} can be restricted to $\bV_\fh^{0, \Gamma_f^D}$, since part of $\partial\Omega_f$ is $\Gamma_\fp$. Multiplying \eqref{e2.1a} by $\bv_\fh\in\bV_\fh$, integrating by parts, and substituting the normal fluid stress on $\Gamma_f$ from \eqref{e2.1c}, we get
\begin{align*}
     &2\mu(\bD(\bu_\fh),\bD(\bv_\fh))_{\Omega_f}- \langle \bsigma_\fh \bn_f, \bv_\fh\rangle_{\partial\Omega_f\setminus{\Gamma_f^N}}-(p_\fh,\nabla\cdot \bv_\fh)_{\Omega_f}= (\bF_f, \bv_f)_{\Omega_f}+\langle\bg_f^N,\bv_\fh\rangle_{\Gamma_f^N},
\end{align*}
where $\bsigma_\fh = -p_\fh\bI+2\mu\bD(\bu_\fh)$. We impose the Dirichlet boundary condition weakly via Nitsche's method, obtaining that $(\bu_{fh},p_{fh} \in \bV_{fh}\times W_{fh}$ satisfy, for all $(\bv_\fh,w_\fh)\in\bV_\fh\times\rW_\fh$
\begin{subequations}\label{e3.2}
\begin{align}
    &2\mu(\bD(\bu_\fh),\bD(\bv_\fh))_{\Omega_f} -2\mu\langle \bD(\bu_\fh) \bn_f, \bv_\fh\rangle_{\Gamma_f^D}-2\mu\langle \bu_\fh, \bD(\bv_\fh) \bn_f\rangle_{\Gamma_f^D} + \frac{\gamma}{h}\langle \bu_\fh, \bv_\fh\rangle_{\Gamma_f^D}\nonumber\\ 
  &\qquad-(p_\fh,\nabla\cdot \bv_\fh)_{\Omega_f}
  +\langle p_\fh, \bv_\fh\cdot \bn_f\rangle_{\Gamma_f^D}
  -\langle \bsigma_\fh\bn_f, \bv_\fh\rangle_{\Gamma_\fp}
  \nonumber\\
  &\quad =(\bF_\fh, \bv_\fh)_{\Omega_f} -2\mu\langle \bg_f^D, \bD(\bv_\fh) \bn_f\rangle_{\Gamma_f^D} +\frac{\gamma}{h}\langle \bg_f^D, \bv_\fh\rangle_{\Gamma_f^D}
  +\langle \bg_f^N, \bv_\fh\rangle_{\Gamma_f^N},\label{e3.2a}
    \\
    &-(\nabla\cdot \bu_\fh, w_\fh)_{\Omega_f} + \langle \bu_\fh\cdot \bn_f, w_\fh\rangle_{\Gamma_f^D}=- (q_f, w_\fh)_{\Omega_f} + \langle \bg_f^D\cdot \bn_f, w_\fh\rangle_{\Gamma_f^D},\label{e3.2b}
\end{align}
\end{subequations}
where $\gamma> 0$ is the Nitsche penalty parameter. We will later see that we need $\gamma$ large enough for the well posedness of the global problem and also for the Stokes subproblems. Note that the term $\langle \bsigma_\fh\bn_f, \bv_\fh\rangle_{\Gamma_\fp}$ in \eqref{e3.2a} will be used for the coupling with the Biot system.
\subsection{Biot variational formulation}
In the poroelastic region $\Omega_p$. Let
\[
\begin{alignedat}{2}
\bV_p&:=\{\bv_p\in\bH(\mathrm{div};\Omega_p):\bv_p\cdot\bn_p = 0 \text{ on }\Gamma_p^{N_v}\},\quad
& \rW_p &:= \rL^2(\Omega_p),\\
\mathbb{X}_p &:= \{\btau_p\in\mathbb{H}(\mathrm{div};\Omega_p)
 : \btau_p\bn_p = \boldsymbol{0} \ \text{on } \Gamma_p^{N_s}\},\quad
& \mathbb{Q}_p &= \mathbb{L}^2(\Omega_p,\mathbb{N}),\\
\bV_d &= \bL^2(\Omega_p),\quad
& && 
\end{alignedat}
\]
where $\mathbb{N}$ denotes the space of $d\times d$ skew-symmetric matrices and
\begin{align*}
    \bH(\mathrm{div};\Omega)=\{\bv\in\rL^2(\Omega,\mathbb{R}^d)\ :\ \mathrm{div}\ \bv\in\rL^2(\Omega) \},\quad \mathbb{H}(\mathrm{div};\Omega)=\{\btau\in\rL^2(\Omega,\mathbb{M})\ :\ \mathrm{div}\ \btau\in\rL^2(\Omega,\mathbb{R}^d) \},
\end{align*}
equipped with the norms
\begin{align*}
    \|\bv\|_{\bH(\mathrm{div};\Omega)} = \left(\|\bv\|^2_{\Omega}+\|\mathrm{div}\ \bv\|^2_{\Omega}\right)^{1/2},\quad \|\btau\|_{\mathbb{H}(\mathrm{div};\Omega)} = \left(\|\btau\|^2_{\Omega}+\|\mathrm{div}\ \btau\|^2_{\Omega}\right)^{1/2}.
\end{align*}
Let $\bV_\ph\times\rW_\ph\subset\bV_p\times\rW_p$ be any stable Darcy mixed finite element pair satisfying the inf-sup condition
\begin{align}\label{e3.3}
    C\sup_{\bv_p\in \bV_\ph\setminus \{\boldsymbol{0}\}} \frac{(w_\ph, \nabla\cdot \bv_\ph)_{\Omega_p}}{\|\bv_\ph\|_{\bH(\mathrm{div};\Omega_p)}}\geq \|w_\ph\|_{\Omega_p},\quad \forall\ w_\ph\in \rW_\ph.
\end{align}
Let $\mathbb{X}_\ph\times\bV_\dh\times\mathbb{Q}_\ph\subset\mathbb{X}_p\times\bV_d\times\mathbb{Q}_p$ be any stable finite element triple for mixed elasticity with weak stress symmetry satisfying the inf-sup condition
\begin{align}\label{e3.4}
\begin{split}
     C\sup_{\btau_\ph\in \mathbb{X}_\ph^{0,\Gamma_\fp}\setminus \{0\}} \frac{(\bxi_\ph,\nabla\cdot\btau_\ph)_{\Omega_p}+(\bchi_\ph,\btau_\ph)_{\Omega_p}}{\|\btau_\ph\|_{\mathbb{H}(\mathrm{div};\Omega_p)}}&\geq  \|\bxi_\ph\|_{\Omega_p}+\|\bchi_\ph\|_{\Omega_p},\quad\forall\ \bxi_\ph\in\bV_\dh,\ \bchi_\ph\in\mathbb{Q}_\ph,
     \end{split}
\end{align}
where
\begin{align*}
    \mathbb{X}_\ph^{0,\Gamma_\fp}=\{\btau_\ph\in\mathbb{X}_\ph\ :\ \btau_\ph\bn_p = 0\ \text{on }\Gamma_\fp\}.
\end{align*}
We note that $\btau_\ph$ in \eqref{e3.4} can be restricted to $\mathbb{X}_\ph^{0,\Gamma_\fp}$, since $|\Gamma_p^{D_d}|>0$. We next note that 
\begin{align*}
    \nabla\cdot\disp_p = \text{tr}(\bD(\disp_p))=\text{tr}(A\bsigma_e)=\text{tr}(A(\bsigma_p+\alpha p_p\bI)),
\end{align*}
where $\text{tr}$ is the trace operator.
From the stress-strain relation in \eqref{e2.3} we have
\begin{align}\label{e3.5}
    A(\bsigma_p+\alpha p_p\bI) = \bD(\disp_p) = \nabla\disp_p - \bgamma_p,
\end{align}
where $\bgamma_p = \frac{1}{2}(\nabla\disp_p - \nabla\disp_p^\mathrm{T})$. We test equation \eqref{e3.5} with $\btau_\ph\in \mathbb{X}_\ph$ and the second equation in \eqref{e2.2a} with $\bv_\ph\in\bV_\ph$ and integrate by parts. In \eqref{e2.2a} we test the first equation with $\bxi_\ph\in \mathbb{Q}_\ph$ and the third equation with $w_\ph\in\rW_\ph$. Using the Lagrange multiplier $\bchi_\ph\in\mathbb{Q}_\ph$, we weakly impose the symmetry of $\bsigma_\ph\in\mathbb{X}_\ph$. We obtain that 
$(\bsigma_\ph,\disp_\ph,\bgamma_\ph,\bu_\ph,p_\ph)\in \mathbb{X}_\ph\times\bV_{dh}\times\mathbb{Q}_\ph\times\bV_\ph\times\rW_\ph$ satisfy, 
for all $(\btau_\ph,\bxi_\ph,\bchi_\ph,\bv_\ph,w_\ph)\in \mathbb{X}_\ph\times\bV_{dh}\times\mathbb{Q}_\ph\times\bV_\ph\times\rW_\ph$,
\begin{subequations}\label{e3.6}
\begin{align}
  &(A(\bsigma_\ph + \alpha p_\ph\bI),\btau_\ph)_{\Omega_p}+ (\disp_\ph, \nabla\cdot\btau_\ph)_{\Omega_p}+(\bgamma_\ph, \btau_\ph)_{\Omega_p}
  -\langle \disp_\ph, \btau_\ph\bn_p\rangle_{\Gamma_\fp}
  =\langle \bg_p^{D_d}, \btau_\ph\bn_p\rangle_{\Gamma_p^{D_d}},\label{e3.6b}\\
    &(\nabla\cdot\bsigma_\ph,\bxi_\ph)_{\Omega_p}=-(\bF_p,\bxi_\ph)_{\Omega_p},\label{e3.6a}\\
    &(\bsigma_\ph,\bchi_\ph)_{\Omega_p} = 0,\label{e3.6c}\\
  &(\mu\bK^{-1}\bu_\ph,\bv_\ph)_{\Omega_p}-(p_\ph,\nabla\cdot\bv_\ph)_{\Omega_p}
  +\langle p_\ph, \bv_\ph\cdot \bn_p\rangle_{\Gamma_{fp}}
  =-\langle g_p^{D_p},\bv_\ph\cdot\bn_p\rangle_{\Gamma_p^{D_p}},\label{e3.6d}\\
    &(s_0 p_\ph,w_\ph)_{\Omega_p} + \alpha (A(\bsigma_\ph+\alpha p_\ph\bI),w_\ph\bI)_{\Omega_p} +(\nabla \cdot \bu_\ph,w_\ph)_{\Omega_p} = (q_\ph,w_\ph)_{\Omega_p}.\label{e3.6e}
\end{align}    
\end{subequations}
Note that the terms $\langle \disp_\ph, \btau_\ph\bn_p\rangle_{\Gamma_\fp}$ in \eqref{e3.6a} and $\langle p_\ph, \bv_\ph\cdot \bn_p\rangle_{\Gamma_{fp}}$ in \eqref{e3.6d} will be used for the coupling with the Stokes problem.

\subsection{Stokes-Biot mixed finite element method}
We introduce two Lagrange multipliers on the Stokes-Biot interface:
\begin{equation}\label{LM}
    \lambda^p_h = p_\ph,\quad \blambda^d_h = \disp_\ph\quad \text{on }\Gamma_{\fp}.
\end{equation}
The first one is standard in Stokes–Darcy and Stokes–Biot models with a mixed Darcy formulation and it is used to impose weakly continuity of flux. The second one is needed in the mixed elasticity formulation, since the trace of $\disp_p$ on $\Gamma_\fp$ is not well defined for $\disp_p\in\bL^2(\Omega_p)$. It is used to impose weakly the continuity of normal stress condition $\bsigma_f\bn_f + \bsigma_p\bn_p=0$ and the BJS condition, \textit{cf}. \eqref{e2.4b}. For the spaces we choose non-conforming approximations:
\begin{align}\label{e3.7}
\begin{split}
  &\Lambda^p_h := \bV_\ph\cdot\bn_p|_{\Gamma_\fp},\quad \bLambda^d_h := \mathbb{X}_\ph\bn_p|_{\Gamma_\fp}, \quad \text{with norms} \quad
  \|\bmu^d_h\|_{ \bLambda^d_h}=\|\bmu^d_h\|_{\Gamma_\fp}, \quad
  \|\mu^p_h\|_{ \Lambda^p_h}=\|\mu^p_h\|_{\Gamma_\fp}.
    \end{split}
\end{align}
Using \eqref{LM} and the second equalities in \eqref{e2.4a} and \eqref{e2.4b}, the term $-\langle \bsigma_\fh\bn_f, \bv_\fh\rangle_{\Gamma_\fp}$ in \eqref{e3.2a} can be rewritten as
\begin{equation*}
  \begin{split}
    -\langle \bsigma_\fh\bn_f, \bv_\fh\rangle_{\Gamma_\fp} & =
    -\langle \bsigma_\fh\bn_f\cdot\bn_f, \bv_\fh\cdot\bn_f\rangle_{\Gamma_\fp}
    -\sum_{l=1}^{d-1}\langle \bsigma_\fh\bn_f\cdot\bt_{f,l}, \bv_\fh\cdot\bt_{f,l}\rangle_{\Gamma_\fp} \\
    & = \<\lambda^p_h,\bv_\fh\cdot\bn_f\>_{\Gamma_\fp}
    + \sum_{l=1}^{d-1}\Big\<\mu\alpha_{\bjs}\sqrt{\bK^{-1}_l}\big(\bu_f-\blambda_h^d\big)\cdot \bt_{f,l},\bv_\fh\cdot\bt_{f,l}\Big\>_{\Gamma_{fp}}.
  \end{split}
\end{equation*}
The variational formulation for the Stokes-Biot problem reads: Find $(\bu_\fh,p_\fh,\bsigma_\ph,\disp_\ph,\bgamma_\ph,\bu_\ph,p_\ph,\blambda^d_h,\lambda^p_h)\in\bV_\fh\times\rW_\fh\times\mathbb{X}_\ph\times\bV_\dh\times\mathbb{Q}_\ph\times\bV_\ph\times\rW_\ph\times\bLambda^d_h\times\Lambda^p_h$ such that for all $(\bv_\fh,w_\fh,\btau_\ph,\bxi_\ph,\bchi_\ph,\bv_\ph,w_\ph,\bmu^d_h,\mu^p_h)\in\bV_\fh\times\rW_\fh\times\mathbb{X}_\ph\times\bV_d\times\mathbb{Q}_\ph\times\bV_\ph\times\rW_\ph\times\bLambda^d_h\times\Lambda^p_h$, 
\begin{subequations}\label{e3.8}
\begin{align}    &2\mu(\bD(\bu_\fh),\bD(\bv_\fh))_{\Omega_f} -2\mu\langle \bD(\bu_\fh) \bn_f, \bv_\fh\rangle_{\Gamma_f^D} -2\mu\langle \bu_\fh, \bD(\bv_\fh) \bn_f\rangle_{\Gamma_f^D}+ \frac{\gamma}{h}\langle \bu_\fh, \bv_\fh\rangle_{\Gamma_f^D}\nonumber\\
  &\quad +\sum_{l=1}^{d-1}\Big\<\mu\alpha_{\bjs}\sqrt{\bK^{-1}_l}(\bu_\fh - \blambda_h^d)\cdot\bt_{f,l},\bv_\fh\cdot\bt_{f,l}\Big\>_{\Gamma_\fp} -(p_\fh,\nabla\cdot \bv_\fh)_{\Omega_f}+\langle p_\fh, \bv_\fh\cdot \bn_f\rangle_{\Gamma_f^D}
  \nonumber\\
  & \quad + \langle \lambda^p_h,\bv_\fh\cdot\bn_f\rangle_{\Gamma_\fp}
  =(\bF_f, \bv_\fh)_{\Omega_f}-2\mu\langle \bg_f^D, \bD(\bv_\fh) \bn_f\rangle_{\Gamma_f^D}+\frac{\gamma}{h}\langle \bg_f^D, \bv_\fh\rangle_{\Gamma_f^D}+\langle \bg_f^N, \bv_\fh\rangle_{\Gamma_f^N}, \label{e3.8a}\\
    -&(\nabla\cdot \bu_f, w_\fh)_{\Omega_f} + \langle \bu_\fh\cdot \bn_f, w_\fh\rangle_{\Gamma_f^D}= -(q_f, w_\fh)_{\Omega_f} + \langle \bg_f^D\cdot \bn_f, w_\fh\rangle_{\Gamma_f^D},\label{e3.8b}\\
    &(A(\bsigma_\ph + \alpha p_\ph\bI),\btau_\ph)_{\Omega_p}+ (\disp_\ph, \nabla\cdot\btau_\ph)_{\Omega_p}+(\bgamma_\ph, \btau_\ph)_{\Omega_p}
    -\langle \blambda^d_h, \btau_\ph\bn_p\rangle_{\Gamma_\fp}
    =\langle \bg_p^{D_d}, \btau_\ph\bn_p\rangle_{\Gamma_p^{D_d}},\label{e3.8c}\\
    &(\nabla\cdot\bsigma_\ph,\bxi_\ph)_{\Omega_p}=-(\bF_p,\bxi_\ph)_{\Omega_p},\label{e3.8d}\\
    &(\bsigma_\ph,\bchi_\ph)_{\Omega_p} = 0,\label{e3.8e}\\ &(\mu\bK^{-1}\bu_\ph,\bv_\ph)_{\Omega_p}-(p_\ph,\nabla\cdot\bv_\ph)_{\Omega_p}
    +\langle \lambda^p_h, \bv_\ph\cdot \bn_p\rangle_{\Gamma_\fp}
    =-\langle g_p^{D_p},\bv_\ph\cdot\bn_p\rangle_{\Gamma_p^{D_p}},\label{e3.8f}\\
    &(s_0 p_\ph,w_\ph)_{\Omega_p} + \alpha (A(\bsigma_\ph+\alpha p_\ph\bI),w_\ph\bI)_{\Omega_p} +(\nabla \cdot \bu_\ph,w_\ph)_{\Omega_p} = (q_p,w_\ph)_{\Omega_p},\label{e3.8g}\\
    &\langle\bu_\fh\cdot\bn_f +\blambda^d_h\cdot\bn_p
    +\bu_\ph\cdot\bn_p,\mu^p_h\rangle_{\Gamma_\fp}=0,\label{e3.8h}\\
    &\langle\lambda^p_h,\bmu^d_h\cdot\bn_p\rangle_{\Gamma_\fp}
    -\sum_{l=1}^{d-1}\Big\<\mu\alpha_{\bjs}\sqrt{\bK^{-1}_l}
(\bu_\fh-\blambda^d_h)\cdot\bt_{f,l},\bmu^d_h\cdot\bt_{f,l}\Big\>_{\Gamma_\fp}+\langle\bsigma_\ph\bn_p,\bmu^d_h\rangle_{\Gamma_\fp}=0.\label{e3.8i}
\end{align}
\end{subequations}
Here \eqref{e3.8h} enforces weakly the conservation of mass across the interface $\Gamma_\fp$ -- the first equation in \eqref{e2.4a}, while \eqref{e3.8i} enforces weakly the conservation of momentum -- the first equation in \eqref{e2.4b}. The second equations in \eqref{e2.4a} and \eqref{e2.4b} have been implicitly incorporated in \eqref{e3.8i}.

To simplify the notation and make the structure of the system more evident, we introduce the bilinear forms
\begin{subequations}\label{e3.9}
\begin{align}
    &a_f(\bu_\fh,\bv_\fh) =  2\mu(\bD(\bu_\fh),\bD(\bv_\fh))_{\Omega_f} -2\mu\langle \bD(\bu_\fh)\cdot \bn_f, \bv_\fh\rangle_{\Gamma_f^D}\nonumber\\
  &\hspace{3cm}-2\mu\langle \bu_\fh, \bD(\bv_\fh)\cdot \bn_f\rangle_{\Gamma_f^D}+ \frac{\gamma}{h}\langle \bu_\fh, \bv_\fh\rangle_{\Gamma_f^D},\label{e3.9a}\\ &a_{\bjs}(\bu_\fh,\blambda^d_h;\bv_\fh,\bmu^d_h)
  =\sum_{l=1}^{d-1}\Big\<\mu\alpha_{\bjs}\sqrt{\bK^{-1}_l} (\bu_\fh-\blambda_h^d)\cdot\bt_{f,l},(\bv_\fh-\bmu_h^d)\cdot\bt_{f,l}\Big\>_{\Gamma_\fp}, \\
  &b_f(\bv_\fh,w_\fh)=-(\nabla\cdot \bv_f, w_\fh)_{\Omega_f}
  + \langle \bv_\fh\cdot \bn_f, w_\fh\rangle_{\Gamma_f^D}, \\
  &a_e(\bsigma_\ph,p_\ph;\btau_\ph,w_\ph)= (A(\bsigma_\ph+\alpha p_\ph\bI),\btau_\ph+\alpha w_\ph\bI)_{\Omega_p}, \\
&   a_p^p(p_\ph,w_\ph) = (s_0p_\ph,w_\ph)_{\Omega_p}, \quad a_p(\bu_\ph,\bv_\ph)=(\mu\bK^{-1}\bu_\ph,\bv_\ph)_{\Omega_p}, \\ 
  &b_p(\bv_\ph,w_\ph)=(\nabla\cdot\bv_\ph,w_\ph)_{\Omega_p},\quad b_e(\btau_\ph;\bxi_\ph,\bchi_\ph)=(\nabla\cdot\btau_\ph,\bxi_\ph)_{\Omega_p}
  +(\btau_\ph,\bchi_\ph)_{\Omega_p}, \\    &b_{\Gamma_1}(\bu_\fh,\bu_\ph,\blambda^d_h;\mu^p_h)=\langle\bu_\fh\cdot\bn_f+\blambda^d_h\cdot\bn_p+\bu_\ph\cdot\bn_p,\mu^p_h\rangle_{\Gamma_\fp}, \\
  \begin{split}
    &b_{\Gamma_2}(\bsigma_\ph,\bmu^d_h)
    =    \langle\bsigma_\ph\bn_p,\bmu^d_h\rangle_{\Gamma_\fp},
    \end{split}
\end{align}
\end{subequations}
and the functionals
\begin{align*}
    &f_f(\bv_\fh) = (\bF_\fh, \bv_\fh)_{\Omega_f}-2\mu\langle \bg_f^D, \bD(\bv_\fh)\cdot \bn_f\rangle_{\Gamma_f^D}+\frac{\gamma}{h}\langle \bg_f^D, \bv_\fh\rangle_{\Gamma_f^D}+\langle \bg_f^N, \bv_\fh\rangle_{\Gamma_f^N},\\
    &q_f(w_\fh) = -(q_f,w_\fh)_{\Omega_f}+\langle \bg_f^D\cdot\bn_f, w_\fh\rangle_{\Gamma_f^D},\quad g_s(\btau_\ph) = \langle \bg_p^{D_p},\btau_\ph\bn_p\rangle_{\Gamma_p^{D_d}},\\
    &g_p(\bv_\ph) = -\langle g_p^{D_p},\bv_\ph\cdot\bn_p\rangle_{\Gamma_p^{D_p}}, \quad f_p(\bxi_\ph) = -(\bF_p,\bxi_\ph)_{\Omega_p}, \quad q_p(w_\ph) = (q_p,w_\ph)_{\Omega_p}.
\end{align*}
We can now rewrite the above variational formulation as follows: Find $(\bu_\fh,p_\fh,\bsigma_\ph,\disp_\ph,\bgamma_\ph,\bu_\ph,p_\ph,\blambda^d_h,\lambda^p_h)\in\bV_\fh\times\rW_\fh\times\mathbb{X}_\ph\times\bV_d\times\mathbb{Q}_\ph\times\bV_\ph\times\rW_\ph\times\bLambda^d_h\times\Lambda^p_h$ such that for all $(\bv_\fh,w_\fh,\btau_\ph,\bxi_\ph,\bchi_\ph,\bv_\ph,w_\ph,\bmu^d_h,\mu^p_h)\in\bV_\fh\times\rW_\fh\times\mathbb{X}_\ph\times\bV_\dh\times\mathbb{Q}_\ph\times\bV_\ph\times\rW_\ph\times\bLambda^d_h\times\Lambda^p_h$,
\begin{subequations}\label{e3.10}
\begin{flalign}
  &a_f(\bu_\fh,\bv_\fh) + a_{\bjs}(\bu_\fh,\blambda^d_h;\bv_\fh,\mathbf{0}) + b_f(\bv_\fh,p_\fh)
  + b_{\Gamma_1}(\bv_\fh,\mathbf{0},\mathbf{0};\lambda^p_h)
  = f_f(\bv_\fh) 
  \label{e3.10a}&&\\
  &b_f(\bu_\fh,w_\fh)=q_f(w_\fh),\label{e3.10b}&&\\   &a_e(\bsigma_\ph,p_\ph;\btau_\ph,0)+b_e(\btau_\ph;\disp_\ph,\bgamma_\ph)
  - b_{\Gamma_2}(\btau_\ph,\blambda^d_h)
  =g_s(\btau_\ph),\label{e3.10c}&&\\
    &b_e(\bsigma_\ph;\bxi_\ph,\bchi_\ph)_{\Omega_p} = f_p(\bxi_\ph),\label{e3.10d}&&\\
  &a_p(\bu_\ph,\bv_\ph)-b_p(\bv_\ph,p_\ph) + b_{\Gamma_1}(\mathbf{0},\bv_\ph,\mathbf{0};\lambda^p_h)
  =g_p(\bv_\ph),\label{e3.10e}&&\\
    &a_p^p(p_\ph,w_\ph) + a_e(\bsigma_\ph,p_\ph;\mathbf{0},w_\ph)+b_p(\bu_\ph,w_\ph) = q_p(w_\ph),\label{e3.10f}&&\\
&b_{\Gamma_1}(\bu_\fh,\bu_\ph,\blambda^d_h;\mu^p_h)=0,\label{e3.10g}&&\\
  &a_{\bjs}(\bu_\fh,\blambda^d_h;\mathbf{0},\blambda_h^d) + b_{\Gamma_1}(\mathbf{0},\mathbf{0},\bmu_h^d;\lambda^p_h)
+ b_{\Gamma_2}(\bsigma_\ph,\bmu^d_h)=0.\label{e3.10h}&&
\end{flalign}
\end{subequations}

We next show that the system \eqref{e3.10} has a solution. Recall Korn's inequality \cite{Toselli-Widlund}
\begin{align}\label{e3.11}
    \|\bD(\bv_\fh)\|_{\Omega_f}^2+\frac{1}{h}\|\bv_\fh\|_{\Gamma_f^D}^2\geq C_{K}\|\bv_\fh\|_{1,\Omega_f}^2\quad \forall\ \bv_\fh\in\bH^1(\Omega_f),
\end{align}
where $C_{K}$ depends on the shape of $\Omega_f$ and dimension $d$. We also recall the inverse estimate \cite{ciarlet}, for all $e \in \mathcal{T}_{h_f}^f|_{\Gamma_f^D}$, $e \subset \partial E$,
\begin{align}\label{e3.12}
    h\|\varphi\|_{e}^2\leq C_{I}\|\varphi\|_{E}^2,\quad \forall\text{ polynomial valued }\varphi \text{ on } E,
\end{align}
where $C_I$ depends on the shape regularity of the mesh $\mathcal{T}_{h_f}^f$ and the quasiuniformity of its trace on $\Gamma_f^D$. We next show that $a_f$ is coercive.

\begin{lemma}\label{lemma3.1}
  Assume that $|\Gamma_f^D| > 0$ and $\gamma > 2\mu C_I^2$. Then there exists a constant $C_f$ that depends on $C_I$, $\gamma$, and $\mu$, such that
    \begin{align}\label{e3.13}
    a_f(\bu_\fh,\bu_\fh)\geq C_f\|\bu_\fh\|_{1,\Omega_f}.
\end{align}
\end{lemma}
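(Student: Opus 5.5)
The plan is the standard Nitsche coercivity argument: bound the two symmetric consistency terms on $\Gamma_f^D$ by Cauchy--Schwarz, the inverse estimate \eqref{e3.12} and Young's inequality, absorb them into $2\mu\|\bD(\bu_\fh)\|^2_{\Omega_f}$ and the penalty term, and finish with Korn's inequality \eqref{e3.11}. (The statement should presumably read $\|\bu_\fh\|^2_{1,\Omega_f}$ on the right; I aim for that.)

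First, setting $\bv_\fh=\bu_\fh$ in \eqref{e3.9a} gives
\begin{align*}
a_f(\bu_\fh,\bu_\fh)=2\mu\|\bD(\bu_\fh)\|^2_{\Omega_f}-4\mu\langle \bD(\bu_\fh)\bn_f,\bu_\fh\rangle_{\Gamma_f^D}+\frac{\gamma}{h}\|\bu_\fh\|^2_{\Gamma_f^D}.
\end{align*}
To bound the middle term, I sum over faces $e\subset\Gamma_f^D$. Each face is contained in $\partial E$ for some element $E$, and $\bD(\bu_\fh)$ is polynomial on $E$. Applying \eqref{e3.12} elementwise (with its constant written so that $h^{1/2}\|\bD(\bu_\fh)\|_{e}\le C_I\|\bD(\bu_\fh)\|_E$, consistent with the $C_I^2$ in the hypothesis), and using that each element has a bounded number of boundary faces, I get
\begin{align*}
4\mu|\langle \bD(\bu_\fh)\bn_f,\bu_\fh\rangle_{\Gamma_f^D}|\le 4\mu C_I\|\bD(\bu_\fh)\|_{\Omega_f}\,h^{-1/2}\|\bu_\fh\|_{\Gamma_f^D}\le \mu\epsilon\|\bD(\bu_\fh)\|^2_{\Omega_f}+\frac{4\mu C_I^2}{\epsilon h}\|\bu_\fh\|^2_{\Gamma_f^D}.
\end{align*}
The quasiuniformity of the trace mesh on $\Gamma_f^D$ is what lets the global $h$ replace the local face size.

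Next I choose $\epsilon\in(0,2)$, for example $\epsilon=1$, which gives
\begin{align*}
a_f(\bu_\fh,\bu_\fh)\ge \mu\|\bD(\bu_\fh)\|^2_{\Omega_f}+\frac{\gamma-4\mu C_I^2}{h}\|\bu_\fh\|^2_{\Gamma_f^D}.
\end{align*}
The main obstacle is making this work under the stated threshold $\gamma>2\mu C_I^2$. That needs $\epsilon$ close to $2$, so the surviving coefficients are $(2-\epsilon)\mu$ and $\gamma-2\mu C_I^2\cdot(2/\epsilon)$, both positive once $2/\epsilon<\gamma/(2\mu C_I^2)$. Whether the exact threshold is $2\mu C_I^2$ or $4\mu C_I^2$ depends on how $C_I$ is normalized (squared or not) in \eqref{e3.12}. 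I would track that normalization and pick $\epsilon$ accordingly. Any fixed admissible $\epsilon$ yields constants depending only on $\mu$, $\gamma$ and $C_I$.

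Finally, with $c=\min\{(2-\epsilon)\mu,\ \gamma-4\mu C_I^2/\epsilon\}>0$, I have $a_f(\bu_\fh,\bu_\fh)\ge c\big(\|\bD(\bu_\fh)\|^2_{\Omega_f}+h^{-1}\|\bu_\fh\|^2_{\Gamma_f^D}\big)$. Korn's inequality \eqref{e3.11}, valid since $|\Gamma_f^D|>0$, then gives $a_f(\bu_\fh,\bu_\fh)\ge cC_K\|\bu_\fh\|^2_{1,\Omega_f}$, so $C_f=cC_K$.
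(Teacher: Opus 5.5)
Your proof is correct and is the paper's argument: the inverse estimate \eqref{e3.12} and Young's inequality on the Nitsche consistency terms, followed by Korn's inequality \eqref{e3.11}. Your $\epsilon$ is the paper's $\delta$ via $\epsilon=8\mu/\delta$, so the paper's choice $\gamma/C_I^2>\delta/2>2\mu$ is exactly your $\epsilon\in(4\mu C_I^2/\gamma,\,2)$, and you are right that the conclusion should read $\|\bu_\fh\|^2_{1,\Omega_f}$; your hedge about $2\mu C_I^2$ versus $4\mu C_I^2$ is unnecessary, since that factor comes only from fixing $\epsilon=1$, whereas the normalization of \eqref{e3.12} would at most turn $C_I^2$ into $C_I$, and the paper uses the same normalization $h^{1/2}\|\varphi\|_e\le C_I\|\varphi\|_E$ that you adopted.
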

\begin{proof}
Recalling the definition of $a_f(\cdot,\cdot)$, cf. \eqref{e3.9a}, using the inverse estimate \eqref{e3.12}, and the Cauchy-Schwarz inequality, we obtain
\begin{align*}
    a_f(\bu_\fh,\bu_\fh)&\geq 2\mu\|\bD(\bu_\fh)\|_{\Omega_f}^2-\frac{4\mu C_I}{\sqrt{h}}\| \bD(\bu_\fh)\|_{\Omega_f} \|\bu_\fh\|_{\Gamma_f^D}+ \frac{\gamma}{h}\|\bu_\fh\|_{\Gamma_f^D}^2\\
    &\geq 2\mu\left( 1-\frac{4\mu}{\delta}\right)\|\bD(\bu_\fh)\|_{\Omega_f}^2+\frac{1}{h}\left(\gamma-\frac{\delta C_I^2}{2}\right)\|\bu_\fh\|_{\Gamma_f^D}^2,
\end{align*}
where in the second step, we used Young's inequality 
\begin{align*}
    ab \le \frac{a^2}{2\delta}+\frac{\delta b^2}{2}.
\end{align*}
Now, since $\gamma > 2\mu C_I^2$ we can choose $\delta$ such that $\frac{\gamma}{C_I^2}>\frac{\delta}{2} > 2\mu$ and using Korn's inequality \eqref{e3.11}, we obtain \eqref{e3.13}.
\end{proof}

\begin{lemma}
 Under the assumptions of Lemma~\ref{lemma3.1}, there exists a unique solution to the problem \eqref{e3.10}.
\end{lemma}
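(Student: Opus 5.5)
Since \eqref{e3.10} is a square, finite-dimensional linear system, existence is equivalent to uniqueness. It therefore suffices to show that the homogeneous problem, with all data $\bF_f,q_f,\bg_f^D,\bg_f^N,\bF_p,\bg_p^{D_d},g_p^{D_p},q_p$ set to zero, has only the trivial solution. The argument is an energy argument followed by inf-sup arguments for the remaining multiplier-type variables.

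\emph{Energy step.} Test the homogeneous system with the solution itself: $\bv_\fh=\bu_\fh$, $w_\fh=p_\fh$, $\btau_\ph=\bsigma_\ph$, $\bxi_\ph=\disp_\ph$, $\bchi_\ph=\bgamma_\ph$, $\bv_\ph=\bu_\ph$, $w_\ph=p_\ph$, $\mu^p_h=\lambda^p_h$, $\bmu^d_h=\blambda^d_h$. Take \eqref{e3.10h} with a minus sign, to match the sign of $-b_{\Gamma_2}$ in \eqref{e3.10c}, and \eqref{e3.10g} with the appropriate sign. Then the following terms cancel in pairs: the $b_f$ terms, the $b_e$ terms, the $b_p$ terms (which appear with opposite signs in \eqref{e3.10e} and \eqref{e3.10f}), the $b_{\Gamma_1}$ terms, and the $b_{\Gamma_2}$ terms. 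The two BJS contributions combine into $a_{\bjs}(\bu_\fh,\blambda^d_h;\bu_\fh,\blambda^d_h)$. I will check the signs carefully from \eqref{e3.8h}--\eqref{e3.8i}, since that is where the skew structure lives. The result is
\begin{align*}
a_f(\bu_\fh,\bu_\fh)+a_{\bjs}(\bu_\fh,\blambda^d_h;\bu_\fh,\blambda^d_h)+a_e(\bsigma_\ph,p_\ph;\bsigma_\ph,p_\ph)+a_p(\bu_\ph,\bu_\ph)+a_p^p(p_\ph,p_\ph)=0.
\end{align*}
Each term is nonnegative. Lemma~\ref{lemma3.1} gives $\bu_\fh=0$. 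Positivity of $\bK^{-1}$ gives $\bu_\ph=0$, and $s_0>0$ gives $p_\ph=0$. Positive definiteness of $A$ then gives $\bsigma_\ph+\alpha p_\ph\bI=0$, so $\bsigma_\ph=0$. Finally, $a_{\bjs}=0$ yields $\blambda^d_h\cdot\bt_{f,l}=0$.

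\emph{Inf-sup step.} With these variables gone, \eqref{e3.10a} reduces to $b_f(\bv_\fh,p_\fh)+\langle\lambda^p_h,\bv_\fh\cdot\bn_f\rangle_{\Gamma_\fp}=0$. Restricting to $\bv_\fh$ vanishing on $\Gamma_\fp$ and on $\Gamma_f^D$, and using \eqref{e3.1}, I would try to conclude $p_\fh=0$. This is the main obstacle. The inf-sup condition \eqref{e3.1} is stated over $\bV^{0,\Gamma_f^D}_\fh$, not over velocities that also vanish on $\Gamma_\fp$. I would instead first use \eqref{e3.10e}, which now reads $b_p(\bv_\ph,p_\ph)=\langle\lambda^p_h,\bv_\ph\cdot\bn_p\rangle_{\Gamma_\fp}$ with $p_\ph=0$. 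Since $\Lambda^p_h=\bV_\ph\cdot\bn_p|_{\Gamma_\fp}$, I can choose $\bv_\ph$ with normal trace equal to $\lambda^p_h$, which gives $\lambda^p_h=0$. Then \eqref{e3.10a} reduces to $b_f(\bv_\fh,p_\fh)=0$ for all $\bv_\fh\in\bV^{0,\Gamma_f^D}_\fh$, and \eqref{e3.1} gives $p_\fh=0$.

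\emph{Remaining variables.} On the Biot side, \eqref{e3.10c} becomes $b_e(\btau_\ph;\disp_\ph,\bgamma_\ph)=\langle\blambda^d_h,\btau_\ph\bn_p\rangle_{\Gamma_\fp}$. Restricting to $\btau_\ph\in\mathbb{X}_\ph^{0,\Gamma_\fp}$ and applying \eqref{e3.4} gives $\disp_\ph=0$ and $\bgamma_\ph=0$. Then $\langle\blambda^d_h,\btau_\ph\bn_p\rangle_{\Gamma_\fp}=0$ for all $\btau_\ph$. Since $\bLambda^d_h=\mathbb{X}_\ph\bn_p|_{\Gamma_\fp}$, choosing $\btau_\ph$ with $\btau_\ph\bn_p=\blambda^d_h$ gives $\blambda^d_h=0$. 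Hence the homogeneous problem has only the zero solution, and the square system \eqref{e3.10} has a unique solution.
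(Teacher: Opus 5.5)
Your proof is correct and follows the paper's argument essentially step for step: an energy identity from testing with the solution, then $\lambda^p_h=0$ from \eqref{e3.10e} via $\Lambda^p_h=\bV_\ph\cdot\bn_p|_{\Gamma_\fp}$, then $p_\fh=0$ from \eqref{e3.1}, then $\disp_\ph=\bgamma_\ph=\mathbf{0}$ from \eqref{e3.4}, and finally $\blambda^d_h=\mathbf{0}$ by choosing $\btau_\ph\bn_p=\blambda^d_h$. One small correction: \eqref{e3.10h} must be added with a plus sign, not a minus sign, because then $+b_{\Gamma_2}(\bsigma_\ph,\blambda^d_h)$ cancels the $-b_{\Gamma_2}(\bsigma_\ph,\blambda^d_h)$ from \eqref{e3.10c}, $a_{\bjs}(\bu_\fh,\blambda^d_h;\mathbf{0},\blambda^d_h)$ combines with $a_{\bjs}(\bu_\fh,\blambda^d_h;\bu_\fh,\mathbf{0})$ from \eqref{e3.10a} into $a_{\bjs}(\bu_\fh,\blambda^d_h;\bu_\fh,\blambda^d_h)$, and \eqref{e3.10g} removes the remaining $b_{\Gamma_1}$ terms, giving exactly the identity you state.
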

\begin{proof}
This is a finite dimensional linear problem. To show well posedness it is enough to show that the solution is unique. We consider the homogeneous system \eqref{e3.10},
take $\bv_\fh = \bu_\fh,\ w_\fh = p_\fh,\ \btau_{ph}=\bsigma_{ph},\ \bxi_{ph}=\disp_{ph},\ \bchi_{ph}=\gamma_{ph},\ \bv_{ph}=\bu_{ph},\ w_{ph}=p_{ph},\ \bmu^d_h = \blambda_h^d,$ and $\mu_h^p = \lambda_h^p$, and combine the equations, obtaining
\begin{align*}
\begin{split}
&a_f(\bu_\fh,\bu_\fh)+a_e(\bsigma_\ph,p_\ph;\bsigma_\ph,p_\ph)+a_p(\bu_\ph,\bu_\ph)+a_p^p(p_\ph,p_\ph)+a_{\bjs}(\bu_\fh,\blambda_h^d;\bu_\fh,\blambda_h^d)=0.
    \end{split}
\end{align*}
Using the coercivity of $a_f$, Lemma \ref{lemma3.1}, and the definitions of the bilinear forms \eqref{e3.9}, we obtain
\begin{align*}
    &C_f\|\bu_\fh\|^2_{1,\Omega_f}+\|A^{1/2}(\bsigma_{ph}+\alpha p_{ph}\bI)\|_{\Omega_p}^2+\|\bK^{-1/2}\bu_{ph}\|_{\Omega_p}^2+\|s_0^{1/2}p_{ph}\|_{\Omega_p}^2\\
  &\qquad +\sum_{l=1}^{d-1}\mu\alpha_{\bjs}\sqrt{\bK^{-1}_l}
  \|(\bu_\fh - \blambda^d_h)\cdot\bt_{f,l}\|_{\Gamma_\fp}^2 \ge 0.
\end{align*}
It follows that $\bu_\fh = \mathbf{0},\ \bsigma_\ph = \mathbf{0},\ \bu_\ph=\mathbf{0},\ p_\ph = 0$, and $(\bu_\fhi-\blambda^d_h)\cdot\bt_{f,l} = 0$ for $l = 1,\cdots,d-1$. Using this in the homogeneous equations \eqref{e3.10a}, \eqref{e3.10c}, and \eqref{e3.10e} yields
\begin{align}
    &b_f(\bv_\fh,p_\fh)+\langle \lambda^p_h,\bv_\fh\cdot\bn_f\rangle_{\Gamma_\fp} = 0,\quad \forall\ \bv_\fh\in\bV_\fh,\label{e3.15}\\
    &b_e(\btau_\ph;\disp_\ph,\bgamma_\ph)-\langle \blambda^d_h, \btau_\ph\bn_p\rangle_{\Gamma_\fp} = 0,\quad \forall\ \btau_\ph\in\mathbb{X}_\ph,\label{e3.16}\\
    &\langle \lambda^p_h, \bv_\ph\cdot \bn_p\rangle_{\Gamma_\fp} = 0,\quad \forall\ \bv_\ph\in \bV_\ph.\label{e3.17}
\end{align}
Since $\lambda_h^p\in\Lambda_h^p = \bV_\ph\cdot\bn_p|_{\Gamma_\fp}$, we can take $\bv_\ph\in\bV_\ph$ such that $\bv_\ph\cdot\bn_p|_{\Gamma_\fp} = \lambda_h^p$ in \eqref{e3.17}, implying that $\lambda_h^p = 0$. Next, for $\bv_\fh\in\bV^{0,\Gamma_f^D}_\fh$, it holds that
$b_f(\bv_\fh,w_\fh) = -(\nabla\cdot\bv_\fh,w_\fh)_{\Omega_f}$. 
Using the Stokes inf-sup condition \eqref{e3.1} and \eqref{e3.15}, we obtain 
\begin{align*}
    \|p_\fh\|_{\Omega_f}&\leq  C\sup_{\bv_\fh\in \bV^{0,\Gamma_f^D}_\fh\setminus \{\boldsymbol{0}\}} \frac{b_f(\bv_\fh, p_\fh)_{\Omega_f}}{\|\bv_\fh\|_{1,\Omega_f}}\leq C\sup_{\bv_\fh\in \bV^{0,\Gamma_f^D}_\fh\setminus \{\boldsymbol{0}\}} \frac{\langle \lambda^p_h,\bv_\fh\cdot\bn_f\rangle_{\Gamma_\fp}}{\|\bv_\fh\|_{1,\Omega_f}} = 0,
\end{align*}
hence $p_\fh = 0$.
Next, using the inf-sup condition \eqref{e3.4} and \eqref{e3.16} yields $\disp_{ph} = 0$ and $\bgamma_{\ph} = 0$. Finally, taking $\btau_{ph}\bn_p = \blambda_h^d$ in \eqref{e3.16}, we get $\blambda_h^d = 0$.
\end{proof}

\section{Non-overlapping domain decomposition}\label{sec:dd}
We now decompose $\Omega_f$ and $\Omega_p$ into $N_f$ and $N_p$ non-overlapping Lipschitz polyhedral subdomains, respectively, aligned with the partitions $\mathcal{T}_{h_f}^f$ and $\mathcal{T}_{h_p}^p$:
\begin{align*}
    \Omega_f = \bigcup_{i = 1}^{N_f}\Omega_i,\quad \Omega_p = \bigcup_{i = N_f+1}^{N}\Omega_i,\quad N = N_f+N_p.
\end{align*}
For $1\leq i\leq N$, let $\bn_i$ be the outward unit normal vector to subdomain $\Omega_i$. The exterior boundary of $\Omega_i$, possibly with zero measure, is denoted by $\Gamma_i$:
\begin{align*}
    \Gamma_i = \partial \Omega_i\cap\partial\Omega,\quad 1\leq i\leq N.
\end{align*}
The corresponding Dirichlet and Neumann parts of the exterior boundary for $\Gamma_i$ in the fluid region are denoted by 
\begin{align*}
    \Gamma_i = \Gamma_i^D\cup\Gamma_i^N,\quad \Gamma_i^D = \partial \Omega_i \cap \Gamma_f^D,\quad \Gamma_i^N = \partial\Omega_i\cap \Gamma_f^N,\quad 1\leq i\leq N_f.
\end{align*}
To ensure that the Stokes subdomain problems are well posed, we assume that $|\Gamma_i^D|>0$ for $1\leq i\leq N_f$; see Remark \ref{r2} for the general case.

Similarly, for the poroelastic region we have for the Darcy and the elasticity equations
\begin{align*}
    \Gamma_i &= \Gamma_i^{D_p}\cup\Gamma_i^{N_v},\quad \Gamma_i^{D_p} = \partial \Omega_i \cap \Gamma_p^{D_p},\quad \Gamma_i^{N_v} = \partial\Omega_i\cap \Gamma_p^{N_v},\quad N_f+1\leq i\leq N,\\
    \Gamma_i &= \Gamma_i^{D_d}\cup\Gamma_i^{N_s},\quad \Gamma_i^{D_d} = \partial \Omega_i \cap \Gamma_p^{D_d},\quad \Gamma_i^{N_s} = \partial\Omega_i\cap \Gamma_p^{N_s},\quad N_f+1\leq i\leq N.
\end{align*}
Let $\Gamma_{\ij}$ be the interfaces between the subdomains, again possibly with measure zero:
\begin{align*}
    \Gamma_{\ij} = \partial\Omega_i \cap \partial\Omega_j,\quad 1\leq i<j\leq N.
\end{align*}
We introduce the following notation to represent the union of the interfaces between the subdomains of the same type:
\begin{align*}
    \Gamma_{\ff} = \bigcup_{1\leq i< j\leq N_f}\Gamma_\ij,\quad \Gamma_{\pp} = \bigcup_{N_f+1\leq i< j\leq N}\Gamma_\ij;\quad\text{hence } \Gamma_{\fp}=\left(\bigcup_{1\leq i<j\leq N}\Gamma_{i,j}\right)\setminus(\Gamma_{\ff}\cup\Gamma_{\pp}).
\end{align*}
We also denote the type of interface for each subdomain $1\leq i\leq N$ as
\begin{align*}
    \Gamma_\ff^i = \partial\Omega_i\cap\Gamma_\ff,\quad \Gamma_\pp^i = \partial\Omega_i\cap\Gamma_\pp,\quad \Gamma_\fp^i = \partial\Omega_i\cap\Gamma_\fp.
\end{align*}

\subsection{Domain decomposition variational formulation}
For all $1\leq i\leq N_f$, we define 
\begin{align*}
    \bV_\fhi=\bV_\fh|_{\Omega_i},\quad \rW_\fhi=\rW_{fh}|_{\Omega_i}.
\end{align*} 
The pair $\bV_\fhi\times\rW_\fhi$ satisfies the inf-sup condition
\begin{align}\label{e4.1}
    C\sup_{\bv_\fhi\in \bV^{0,\gamma_i}_\fhi\setminus \{\boldsymbol{0}\}} \frac{(w_\fhi, \nabla\cdot \bv_\fhi)_{\Omega_i}}{\|\bv_\fhi\|_{1,\Omega_i}}\geq \|w_\fhi\|_{\Omega_i},\quad \forall\ w_\fhi\in \rW_\fhi,
\end{align}
where $\gamma_i\subset\partial\Omega_i$, $\gamma_i\neq \partial\Omega_i$, and 
\begin{align*}
    \bV^{0,\gamma_i}_\fhi=\{\bv_\fhi\in\bV_\fhi\ :\ \bv_\fhi = \boldsymbol{0}\quad \text{on }\gamma_i\}.
\end{align*}
In \eqref{e4.1} the constant $C$ is taken to be the maximum of the local inf-sup constants.

For all $N_f+1\leq i\leq N$, we define
\begin{align*}
    \mathbb{X}_{ph,i} = \mathbb{X}_{ph}|_{\Omega_i},\ \bV_{dh,i} = \bV_{dh}|_{\Omega_i},\ \mathbb{Q}_{ph,i}=\mathbb{Q}_{ph}|_{\Omega_i},\ \bV_{ph,i}=\bV_{ph}|_{\Omega_i},\ \rW_{ph,i}=\rW_{ph}|_{\Omega_i}.
\end{align*}
The spaces $\bV_{ph,i}\times\rW_{ph,i}$ satisfy the inf-sup condition
\begin{align}\label{e4.2}
     C\sup_{\bv_{ph,i}\in \bV_{ph,i}\setminus \{\boldsymbol{0}\}} \frac{(w_{ph,i}, \nabla\cdot \bv_{ph,i})_{\Omega_i}}{\|\bv_{ph,i}\|_{\bH(\mathrm{div};\Omega_i)}}\geq \|w_{ph,i}\|_{\Omega_p},\quad \forall\ w_{ph,i}\in \rW_{ph,i}.
\end{align}
and the spaces $\mathbf{X}_{ph,i}\times\bV_{dh,i}\times\mathbb{Q}_{ph,i}$ satisfy the inf-sup condition
\begin{align}\label{e4.3}
\begin{split}
    C\sup_{\btau_{ph,i}\in \mathbb{X}^{0,\gamma_i}_{ph,i}\setminus \{0\}} \frac{(\bxi_{ph,i},\nabla\cdot\btau_{ph,i})_{\Omega_i}+(\bchi_{ph,i},\btau_{ph,i})_{\Omega_i}}{\|\btau_{ph,i}\|_{\bH(\mathrm{div};\Omega_i)}}&\geq  \|\bxi_{ph,i}\|_{\Omega_i}+\|\bchi_{ph,i}\|_{\Omega_i},\quad \forall\ \bxi_{ph,i}\in\bV_{dh,i},\ \bchi_\ph\in\mathbb{Q}_{ph,i},
    \end{split}
\end{align}
where $\gamma_i\subset\partial\Omega_i$, $\gamma_i\cup\Gamma_i^{N_s}\neq \partial \Omega_i$, and
\begin{align*}
    \mathbb{X}^{0,\gamma_i}_{ph,i} = \{\btau_{ph,i}\in\mathbb{X}_{ph,i}\ :\ \btau_{ph,i}\bn_i = 0\ \text{on }\gamma_i\}.
\end{align*}
Note that there is no inter-subdomain continuity in our global spaces. That is, $H^1-$conformity for fluid velocity and $H(\mathrm{div})-$conformity for poroelastic stress and Darcy velocity. To emphasize this, we denote these global spaces as
\begin{gather*}
    \widetilde\bV_\fh= \{\bv_\fh\in\bL^2(\Omega_f)\ :\ \bv_\fh|_{\Omega_i}\in\bV_\fhi\},\quad \widetilde\bV_\ph = \{\bv_\ph\in\bL^2(\Omega_p)\ :\ \bv_\ph|_{\Omega_i}\in\bV_{ph,i}\},\\
    \widetilde{\mathbb{X}}_\ph = \{\btau_\ph\in \mathbb{L}^2(\Omega_p)\ :\ \btau_\ph|_{\Omega_i}\in\mathbb{X}_{ph,i}\}.
\end{gather*}
For a bilinear form $c(\cdot,\cdot)$ on $\Omega_\star$, $\star \in \{f,p\}$, or $\Gamma_{fp}$ we denote its local contribution on $\Omega_i$ as $c_i(\cdot,\cdot)$. We define the ``broken'' global bilinear forms
\begin{align*}
  &\widetilde{a}_f(\bu_\fh,\bv_\fh) = \sum_{i = 1}^{N_f}a_{f,i}(\bu_\fhi,\bv_\fhi),\quad \widetilde{b}_f(\bv_\fh,w_\fh) = \sum_{i = 1}^{N_f}b_{f,i}(\bv_\fhi,w_\fhi),\\
  &\widetilde b_{p}(\bv_\ph,p_\ph)=\sum_{i=N_f+1}^N b_{p,i}(\bv_{ph,i},p_{ph,i}),\quad \widetilde b_e(\btau_{ph};\bxi_{ph},\bchi_{ph})=\sum_{i=N_f+1}^N b_{e,i}(\btau_{ph,i};\bxi_{ph,i},\bchi_{ph,i}).
\end{align*}
We will enforce weakly the $H^1$-conformity for the fluid velocity and the $H(\mathrm{div})$-conformity for the poroelastic stress and Darcy velocity via Lagrange multipliers, see \eqref{e4.4g} below, which will allow us to reduce the hybrid global problem to a problem on the interface in Section \ref{s4.2}. We introduce the Lagrange multipliers $(\blambda_{\ij}^f,\blambda_{\ij}^d,\lambda_{\ij}^p)$:
\begin{align*}
    \blambda_{\ij}^f &:= -\bsigma_{\fh,i}\bn_\ij= -\bsigma_{\fh,j}\bn_\ij\ \text{on }\Gamma_{\ij}\subset\Gamma_{\ff},\\
    \lambda_{\ij}^p&:=p_{\rphi}=p_{\rphj},\ \text{on }\Gamma_{\ij}\subset \Gamma_{\pp},\quad \lambda_{\ij}^p:=-(\bsigma_{\fh,i}\bn_i)\cdot\bn_i=p_{\rphj}\ \text{on }\Gamma_{\ij}\subset\Gamma_{\fp},\\
    \blambda_{\ij}^d&:=\disp_{\rphi}=\disp_{\rphj}\quad \text{on }\Gamma_{\ij}\subset \Gamma_{\pp}\cup\Gamma_{\fp},
\end{align*}
where $\bn_\ij$ is the unit normal vector on $\Gamma_\ij$ outward to $\Omega_i,\ i<j$.
The Lagrange multiplier $\blambda^{f}_\ij$ approximates the normal stress and weakly enforces the continuity of the flux on the Stokes-Stokes interface $\Gamma_{\ij}\subset\Gamma_{\ff}$. The Lagrange multipliers $\lambda_{\ij}^p,\ \blambda_{\ij}^d$ approximate, respectively, the pressure and the displacement on the Biot-Biot interface $\Gamma_{\ij}\subset\Gamma_{\pp}$, and weakly enforce the continuity of the normal component of the Darcy velocity and the normal poroelastic stress. We define the Lagrange multiplier space on $\Gamma_{\ij}$ as follows:
\begin{align*}
    & \boldsymbol{\Lambda}_{\hij}^f =\bV_{\fh}|_{\Gamma_{\ij}}\quad\text{on } \Gamma_{\ij}\subset\Gamma_{\ff},\quad \bLambda^f_h=\Big\{\bmu_h^f\in\bL^2(\Gamma_\ff) \ :\ \bmu_h^f|_{\Gamma_\ij}\in\boldsymbol{\Lambda}_{\hij}^f\Big\},\\
    & \Lambda_{\hij}^p =\bV_{\ph}\cdot\bn_{\ij}|_{\Gamma_{\ij}},\quad \bLambda_{\hij}^d=\mathbb{X}_{\ph}\bn_{\ij}|_{\Gamma_{\ij}}\quad\text{on } \Gamma_{\ij}\subset \Gamma_{\pp}\cup\Gamma_{\fp},\\
    & \Lambda^p_h = \Big\{\mu_h^p\in\rL^2(\Gamma_\pp\cup\Gamma_\fp)\ :\ \mu_h^p|_{\Gamma_\ij}\in\Lambda_{\hij}^p\Big\}, \\
    & \bLambda^d_h =\Big\{\bmu_h^d\in\bL^2(\Gamma_\pp\cup\Gamma_\fp)\ :\ \bmu_h^d|_{\Gamma_\ij}\in\bLambda^d_\hij\Big\}.
\end{align*}
We will henceforth denote
\begin{align*}
    \blambda_h = (\blambda^f_h,\blambda^d_h,\lambda_h^p),\quad\bmu_h=(\bmu^f_h,\bmu^d_h,\mu_h^p),\quad\bLambda_h = \bLambda_h^f\times\bLambda_h^d\times\Lambda_h^p.
\end{align*}
Furthermore, we denote the jumps on fluid and poroelastic interfaces for $1\leq i< j\leq N$ as
\begin{align*}
    &[\bv_{\fh}]=\bv_{\fhi}-\bv_{\fhj},\quad \Gamma_{\ij}\subset\Gamma_{\ff},\\
    &[\bv_{\ph}\cdot\bn]=\bv_{\rphi}\cdot\bn_i+\bv_{\rphj}\cdot\bn_j,\quad [\btau_{\ph}\bn]=\boldsymbol{\tau}_{\rphi}\bn_i+\btau_{\rphj}\bn_j,\quad \Gamma_{\ij}\subset\Gamma_{\pp}.
\end{align*}
We also define the following bilinear forms on the interfaces $\Gamma_\ff$ and $\Gamma_\pp$:
\begin{align*}
    &b_\ff(\bv_\fh,\bmu^f_h):= \sum_{\Gamma_\ij\subset\Gamma_\ff}\langle[\bv_\fh],\bmu^f_h\rangle_{\Gamma_\ij},\quad b_\pp(\bv_\ph,\mu^p_h)=\sum_{\Gamma_\ij\subset\Gamma_\pp}\langle[\bv_\ph\cdot\bn],\mu_h^p\rangle_{\Gamma_\ij},\\
    &b_\ee(\btau_\ph,\bmu_h^d)=\sum_{\Gamma_\ij\subset\Gamma_\pp}\langle[\btau_\ph\bn],\bmu_h^d\rangle_{\Gamma_\ij}.
\end{align*}
%

The domain decomposition formulation is as follows: Find $(\bu_\fh,p_\fh,\bsigma_\ph,\disp_\ph,\bgamma_\ph,\bu_\ph,p_\ph,\blambda_h)\in\widetilde\bV_\fh\times\rW_\fh\times\widetilde{\mathbb{X}}_\ph\times\bV_{dh}\times\mathbb{Q}_\ph\times\widetilde{\bV}_\ph\times\rW_\ph\times\bLambda_h$ such that for all $(\bv_\fh,w_\fh,\btau_\ph,\bxi_\ph,\bchi_\ph,\bv_\ph,w_\ph,\bmu_h)\in\widetilde{\bV}_\fh\times\rW_\fh\times\widetilde{\mathbb{X}}_\ph\times\bV_{dh}\times\mathbb{Q}_\ph\times\widetilde{\bV}_\ph\times\rW_\ph\times\bLambda_h$,
\begin{subequations}\label{e4.4}
\begin{flalign}
  &\widetilde a_f(\bu_\fh,\bv_\fh) + a_{\bjs}(\bu_\fh,\blambda_h^d;\bv_\fh,\mathbf{0}) +\widetilde b_f(\bv_\fh,p_\fh)
  +b_\ff(\bv_\fh,\blambda^f_h)
    + b_{\Gamma_1}(\bv_\fh,\mathbf{0},\mathbf{0};\lambda^p_h)
  =f_f(\bv_\fh),&&\label{e4.4a}\\
    &\widetilde b_f(\bu_\fh,w_\fh)=q_f(w_\fh),\label{e4.4b}\\
  &a_e(\bsigma_\ph,p_\ph;\btau_\ph,0)+\widetilde b_e(\btau_\ph;\disp_\ph,\bgamma_\ph)
  - b_\ee(\btau_\ph,\blambda_h^d) - b_{\Gamma_2}(\btau_\ph,\blambda^d_h)
  = g_s(\btau_\ph),\label{e4.4c}\\
    &\widetilde b_e(\bsigma_\ph;\bxi_\ph,\bchi_\ph) = f_p(\bxi_\ph),\label{e4.4d}\\ 
  &a_p(\bu_\ph,\bv_\ph)-\widetilde b_p(\bv_\ph,p_\ph)
  +b_\pp(\bv_\ph,\lambda^p_h) + b_{\Gamma_1}(\mathbf{0},\bv_\ph,\mathbf{0};\lambda^p_h)
  =g_p(\bv_\ph),\label{e4.4e}\\
    &a_p^p(p_\ph,w_\ph) + a_e(\bsigma_\ph,p_\ph;\mathbf{0},w_\ph)+\widetilde b_p(\bu_\ph,w_\ph) = q_p(w_\ph),\label{e4.4f}\\
  &b_\ff(\bu_\fh,\bmu_h^f)+b_\pp(\bu_\ph,\mu_h^p)-b_\ee(\bsigma_\ph,\bmu_h^d) = 0,\label{e4.4g}\\
&b_{\Gamma_1}(\bu_\fh,\bu_\ph,\blambda^d_h;\mu^p_h)=0,\label{e4.4h}&&\\
  &a_{\bjs}(\bu_\fh,\blambda^d_h;\mathbf{0},\bmu_h^d) + b_{\Gamma_1}(\mathbf{0},\mathbf{0},\bmu_h^d;\lambda^p_h)
+ b_{\Gamma_2}(\bsigma_\ph,\bmu^d_h)=0.\label{e4.4i}&&
\end{flalign}
\end{subequations}
Equations \eqref{e4.4a}--\eqref{e4.4f} reduce to subdomain equations, due to the discontinuity of the test functions across the interfaces. Equation
\eqref{e4.4g} enforces weakly the continuity of the fluid velocity $\bu_\fh$ along $\Gamma_\ff$ as well as the continuity of the normal component of the Darcy velocity $\bu_\ph\cdot\bn_p$ and the normal stress $\bsigma_\ph\bn_p$ along the Biot-Biot interface $\Gamma_\pp$. Equations \eqref{e4.4h} and \eqref{e4.4i} enforce weakly the conservation of mass and momentum along the Stokes-Biot interface $\Gamma_\fp$.

\begin{remark}\label{r1}
   Standard arguments for hybrid mixed finite element methods \cite{BrezziFortin} imply that \eqref{e4.4} has a unique solution. Moreover, \eqref{e4.4} is equivalent to \eqref{e3.10} in the sense that the solution to \eqref{e3.10} is the corresponding component of the solution to \eqref{e4.4}.
\end{remark}

\subsection{Reduction to an interface problem}\label{s4.2}
In this section we show that the algebraic system \eqref{e4.4} can be reduced to a positive definite problem on the interface $\Gamma_\ff\cup\Gamma_\pp\cup\Gamma_\fp$. We introduce two sets of complementary subdomain problems to explicitly separate the dependence on the interface from the dependence on the source terms and the external boundary data. Given, $\blambda_h\in\bLambda_h$ we introduce the following problems. For the Stokes subdomains, $1\leq i\leq N_f$, the problems in the first set read: Find $(\bu_\fhi^*(\blambda_h),p^*_\fhi(\blambda_h))\in\bV_\fhi\times\rW_\fhi$ such that for all $(\bv_\fhi,w_\fhi)\in\bV_\fhi\times\rW_\fhi$, 
\begin{subequations}\label{e4.5}
  \begin{flalign}    &a_{f,i}(\bu_\fhi^*(\blambda_h),\bv_\fhi)
    +a_{\bjs,i}(\bu_\fhi^*(\blambda_h),\mathbf{0};\bv_\fhi,\mathbf{0})
    +b_{f,i}(\bv_\fhi,p_\fhi^*(\blambda_h))\nonumber\\
    &\quad =-\langle\blambda^f_h,\chi\bv_\fhi\rangle_{\Gamma_\ff^i}-\langle \lambda^p_h,\bv_\fhi\cdot\bn_f\rangle_{\Gamma_\fp^i}
    +a_{\bjs,i}(\mathbf{0},\blambda^d_h;\bv_\fhi,\mathbf{0})
    ,&&\label{e4.5a}\\
    &b_{f,i}(\bu_\fhi^*(\blambda_h),w_\fhi)=0,\label{e4.5b}
    \end{flalign}
\end{subequations}
where $\chi$ is either 1 or $-1$ on an interface $\Gamma_{ij}$ depending on the chosen orientation of the normal vector $\bn_{i,j}$.

For the Biot subdomains, $N_f+1\leq i\leq N$, these problems read: Find 
$(\bsigma_{ph,i}^*(\blambda_h),\disp_{ph,i}^*(\blambda_h),\bchi_{ph,i}^*(\blambda_h),
\linebreak \bu_{ph,i}^*(\blambda_h), 
p_{ph,i}^*(\blambda_h)) \in \mathbb{X}_{ph,i}\times\bV_{dh,i}\times\mathbb{Q}_{ph,i}\times\bV_{ph,i}\times\rW_{ph,i}$
such that for all $(\btau_{ph,i},\bxi_{ph,i},\bchi_{ph,i},\bv_{ph,i},w_{ph,i})\in \mathbb{X}_{ph,i}\times\bV_{dh,i}\times\mathbb{Q}_{ph,i}\times\bV_{ph,i}\times\rW_{ph,i}$,
\begin{subequations}\label{e4.6}
    \begin{flalign}
        &a_{e,i}(\bsigma_{ph,i}^*(\blambda_h),p_{ph,i}^*(\blambda_h);\btau_{ph,i},0)+b_{e,i}(\disp_{ph,i}^*(\blambda_h),\bgamma_{ph,i}^*(\blambda_h);\btau_{ph,i})\nonumber\\
        &\quad=\langle\btau_{ph,i}\bn_p,\blambda_h^d\rangle_{\Gamma_\pp^i}+\langle\btau_{ph,i}\bn_p,\blambda^d_h\rangle_{\Gamma_\fp^i},&&\label{e4.6a}\\
    &b_{e,i}(\bxi_{ph,i},\bchi_{ph,i};\bsigma_{ph,i}^*(\blambda_h)) = 0,\label{e4.6b}\\
        &a_{p,i}(\bu_{ph,i}^*(\blambda_h),\bv_{ph,i})-b_{p,i}(\bv_{ph,i},p_{ph,i}^*(\blambda_h))=-\langle\lambda^p_h,\bv_{ph,i}\cdot\bn_p\rangle_{\Gamma_\pp^i}-\langle\lambda^p_h,\bv_{ph,i}\cdot\bn_p\rangle_{\Gamma_\fp^i},\label{e4.6c} \\
    &a_{p,i}^p(p_{ph,i}^*(\blambda_h),w_{ph,i}) + a_{e,i}(\bsigma_{ph,i}^*(\blambda_h),p_{ph,i}^*(\blambda_h);\mathbf{0},w_{ph,i})+b_{p,i}(\bu_{ph,i}^*(\blambda_h),w_{ph,i}) =0.\label{e4.6d}
    \end{flalign}
\end{subequations}

Next, we define the complementary set of problems. For the Stokes subdomains, $1\leq i\leq N_f$, these read: Find $(\bar\bu_\fhi,\bar p_\fhi)\in\bV_\fhi\times\rW_\fhi$ such that for all $(\bv_\fhi,w_\fhi)\in\bV_\fhi\times\rW_\fhi$,
\begin{subequations}\label{e4.7}
\begin{flalign}
    &a_{f,i}(\bar\bu_\fhi,\bv_\fhi)+a_{\bjs,i}(\bar\bu_\fh,\mathbf{0};\bv_\fh,\mathbf{0})+b_{f,i}(\bv_\fhi,\bar p_\fhi)=f_f(\bv_\fhi),\label{e4.7a}&&\\
    &b_{f,i}(\bar\bu_\fhi,w_\fhi)=q_f(w_\fhi).\label{e4.7b}&&
    \end{flalign}
\end{subequations}
For the Biot subdomains, $N_f+1\leq i\leq N$, these read: Find 
$(\bar\bsigma_{ph,i},\bar\disp_{ph,i},\bar\bchi_{ph,i},\bar\bu_{ph,i},\bar p_{ph,i})\in \mathbb{X}_{ph,i}\times\bV_{dh,i}\times\mathbb{Q}_{ph,i}\times\bV_{ph,i}\times\rW_{ph,i}$ such that for all $(\btau_{ph,i},\bxi_{ph,i},\bchi_{ph,i},\bv_{ph,i},w_{ph,i})\in \mathbb{X}_{ph,i}\times\bV_{ph,i}\times\mathbb{Q}_{ph,i}\times\bV_{ph,i}\times\rW_{ph,i}$, 
\begin{subequations}\label{e4.8}
    \begin{flalign}
        &a_{e,i}(\bar\bsigma_{ph,i},\bar p_{ph,i};\btau_{ph,i},0)+b_{e,i}(\bar\disp_{ph,i},\bar\bgamma_{ph,i};\btau_{ph,i})=g_s(\btau_{ph,i}),\label{e4.8a}&&\\
    &b_{e,i}(\bxi_{ph,i},\bchi_{ph,i};\bar\bsigma_{ph,i}) = f_p(\bxi_{ph,i}),\label{e4.8b}&&\\ 
        &a_{p,i}(\bar\bu_{ph,i},\bv_{ph,i})-b_{p,i}(\bv_{ph,i},\bar p_{ph,i})=g_p(\bv_{ph,i}),\label{e4.8c}&&\\
    &a_{p,i}^p(\bar p_{ph,i},w_{ph,i}) + a_{e,i}(\bar\bsigma_{ph,i},\bar p_{ph,i};\mathbf{0},w_{ph,i})+b_{p,i}(\bar\bu_{ph,i},w_{ph,i}) = q_p(w_{ph,i}).\label{e4.8d}&&
    \end{flalign}
\end{subequations}
We will show that these problems are well posed in Section \ref{s4.3}. We can now reduce the problems \eqref{e4.4} to an interface problem. We first define the bilinear form $s_\lambda:\bLambda_h\times\bLambda_h\rightarrow \mathbb{R}$ and the functional $g:\bLambda_h\rightarrow\mathbb{R}$ as
\begin{subequations}
\begin{align}\label{e4.9}
  s_\lambda(\blambda_h,\bmu_h) & = -b_\ff(\bu_\fh^*(\blambda_h),\mu_h^f)-b_\pp(\bu_\ph^*(\blambda_h),\mu_h^p)+b_\ee(\bsigma_\ph^*(\blambda_h),\bmu_h^d)
  \nonumber\\
  &\quad
  -b_{\Gamma_1}(\bu_\fh^*(\blambda_h),\bu_\ph^*(\blambda_h),\blambda_h^d;\mu_h^p)
  + a_{\bjs}(\bu_\fh^*(\blambda_h),\blambda^d_h;\mathbf{0},\bmu_h^d) \nonumber \\
  & \quad + b_{\Gamma_1}(\mathbf{0},\mathbf{0},\bmu_h^d;\lambda^p_h)
+ b_{\Gamma_2}(\bsigma_\ph^*(\blambda_h),\bmu^d_h),\\
g(\bmu) &= b_\ff(\bar\bu_\fh,\mu_h^f)+b_\pp(\bar\bu_\ph,\mu_h^p)-b_\ee(\bar\bsigma_\ph,\bmu_h^d) \nonumber \\
&\qquad +b_{\Gamma_1}(\bar\bu_\fh,\bar\bu_\ph,\boldsymbol{0};\mu_h^p)
-b_{\Gamma_2}(\bar\bsigma_\ph,\bmu_h^d).
\end{align}
\end{subequations}
It is easy to see that problem \eqref{e4.4} is equivalent to solving the interface problem: Find $\blambda_h= (\blambda_h^f,\blambda_h^d,\lambda_p^h)\in\bLambda_h$ such that 
\begin{align}\label{e4.10}
    s_\lambda(\blambda_h,\bmu_h) = g(\bmu_h),\quad \forall\ \bmu_h\in\bLambda_h,
\end{align}
and recovering the solution to \eqref{e4.4} as 
\begin{align*}
    \bu_\fh &= \bu_\fh^*(\blambda_h) + \bar\bu_\fh,\quad p_\fh = p_\fh^*(\blambda_h)+\bar p_\fh,\\
    \bsigma_\ph &= \bsigma_\ph^*(\blambda_h) + \bar\bsigma_\ph,\quad \disp_\ph = \disp_\ph^*(\blambda_h)+\bar\disp_\ph,\quad \bgamma_\ph = \bgamma^*(\blambda_h)+\bar\bgamma,\\
    \bu_\ph &= \bu_\ph^*(\blambda_h) + \bar\bu_\ph,\quad p_\ph = p_\ph^*(\blambda_h)+\bar p_\ph.
\end{align*}
Introducing the operator $S_\lambda:\bLambda_h\rightarrow\bLambda_h$ defined by
\begin{align*}
    (S_\lambda\blambda_h,\bmu_h) = s_\lambda(\blambda_h,\bmu_h),\quad \forall\ \blambda_h,\ \bmu_h\in \bLambda_h
\end{align*}
we can rewrite \eqref{e4.10} in the operator form
\begin{align}\label{e4.11}
    S_\lambda\blambda_h = g_h,
\end{align}
where $g_h\in\bLambda_h$ is defined by $\langle g_h,\bmu_h\rangle_{\Gamma_\fp} = g(\bmu_h),\ \forall\ \bmu_h\in\bLambda_h$.

\subsubsection{Algebraic formulation}
We will now give an algebraic interpretation of this system and show that \eqref{e4.11} is the Schur complement of the system \eqref{e4.4}. Define the operators 
\begin{align*}
    &(A_f\bu_{fh},\bv_{fh})=\widetilde a_f(\bu_{fh},\bv_{fh})+a_{\bjs}(\bu_\fh,\mathbf{0}; \bv_\fh,\mathbf{0}),\quad (A_p\bu_{ph},\bv_{ph})=a_p(\bu_{ph},\bv_{ph}),\\
  &(B_f\bu_{fh},w_{fh})=\widetilde b_f(\bu_{fh},w_{fh}),\quad (B_{\sigma\eta}\bsigma_{ph},\bxi_\ph)=\widetilde b_e(\bsigma_\ph;\bxi_{ph},\boldsymbol{0}), \quad(B_{\sigma\gamma}\bsigma_{ph},\bchi_{ph}) =
  \widetilde b_e(\bsigma_{ph};\boldsymbol{0},\bchi_{ph}),\\
    &(B_p\bu_{ph},w_{ph})=-\widetilde b_p(\bu_{ph},w_{ph}),\quad (A_e^p p_\ph, w_\ph)= a_e(\boldsymbol{0}, p_\ph; \boldsymbol{0}, w_\ph),\quad (A^s_e\bsigma_\ph, \btau_\ph) = a_e(\bsigma_\ph, 0; \btau_\ph, 0),\\
    &(A_p^p p_\ph,w_\ph) = a_p^p(p_\ph, w_\ph),\quad A_e^{sp}= a_e(\bsigma_\ph, 0; \boldsymbol{0}, w_\ph)\quad A_e^{ps}= a_e(\boldsymbol{0}, p_\ph; \btau_\ph, 0),
\end{align*}
where $A_f$ is the only operator with a contribution from the interface $\Gamma_\fp$. We also define the following operators on the degrees of freedom on the interface
\begin{align*}
     &(B_\ff\bu_\fh, \bmu_h^f)=b_\ff(\bu_\fh, \bmu_h^f),\quad (B_\pp\bu_\ph, \mu_h^p)=b_\pp(\bu_\ph, \mu_h^p),\quad (B_\ee\bsigma_\ph,\bmu_h^d)=-b_\ee(\bsigma_\ph,\bmu_h^d),\\
  &(A_{\bjs}^{fd}\bu_\fh, \bmu_h^d)= a_{\bjs}(\bu_\fh,\mathbf{0};\mathbf{0},\bmu_h^d),
  \quad (B_{\Gamma_2}\bsigma_\ph, \bmu_h^d)=b_{\Gamma_2}(\bsigma_\ph,\bmu_h^d), \\
  &(A_{\bjs}^d\blambda_h^d,\bmu_h^d)=a_{\bjs}(\mathbf{0},\blambda_h^d;\mathbf{0},\bmu_h^d),
  \quad (B_{\Gamma_1}^d\blambda_h^d, \mu_h^p)=b_{\Gamma_1}(\boldsymbol{0},\boldsymbol{0},\blambda_h^d;\mu_h^p),\\
     & (B_{\Gamma_1}^p\bu_\ph,\mu_h^p)=b_{\Gamma_1}(\boldsymbol{0},\bu_\ph,\boldsymbol{0};\mu_h^p),\quad (B_{\Gamma_1}^f\bu_\fh,\bmu_h^f)=b_{\Gamma_1}(\bu_\fh,\boldsymbol{0},\boldsymbol{0};\mu_h^p).
    \end{align*}
We can now express the equations \eqref{e4.4} as follows. Multiply \eqref{e4.4i} by a negative sign. For
\begin{align*}
    X = \begin{pmatrix}
    \bu_\fh& p_\fh& \bsigma_\ph&\disp_\ph&\bgamma_\ph&\bu_\ph&p_\ph
\end{pmatrix}^T,\quad \blambda_h = \begin{pmatrix}
    \blambda_h^f&\blambda_h^d&\lambda_h^p
\end{pmatrix}^T,
\end{align*}
we get
\begin{align}\label{e4.12} 
    \begin{pmatrix}
        M & L^T\\
        L & D
    \end{pmatrix}\begin{pmatrix}
        X\\
        \blambda
    \end{pmatrix}= \begin{pmatrix}
        F\\
        0
    \end{pmatrix}
\end{align}
where $F$ is a vector incorporating the source terms and boundary data,
\begin{align}
    M &= \begin{pmatrix}
        M_f & 0\\
        0 & M_p
    \end{pmatrix},\quad M_f = \begin{pmatrix}
        A_f & B_f^T\\
        B_f & 0
    \end{pmatrix},\nonumber\\
    M_p &= E+H = \begin{pmatrix}
        A^s_e & 0 & 0 & 0 & A_e^{ps}\\
        0 & 0 & 0 & 0 & 0\\
         0 & 0 & 0 & 0 & 0\\
          0 & 0 & 0 & 0 & 0\\
           A_e^{sp} & 0 & 0 & 0 & A_p^p+A_e^p\\
    \end{pmatrix}+\begin{pmatrix}
        0 & B_{\sigma \eta}^T & B_{\sigma\gamma}^T & 0 & 0\\
        B_{\sigma \eta} & 0 & 0 & 0 & 0 \\
       B_{\sigma\gamma} & 0 & 0 & 0 & 0 \\
       0 & 0 & 0 & A_p & B_p^T \\
       0 & 0 & 0 & -B_p & 0
    \end{pmatrix},\nonumber\\
    L &= \begin{pmatrix}
        B_\ff & 0 & 0 & 0 & 0 & 0 & 0\\
        -A_{\bjs}^{fd} & 0 & B_{ee}-B_{\Gamma_2} & 0 & 0 & 0 & 0\\
        B_{\Gamma_1}^f & 0 &  0 & 0 & 0 & B_\pp + B_{\Gamma_1}^p & 0
    \end{pmatrix},\quad D = \begin{pmatrix}
        0 & 0 & 0\\
        0 & -A_{\bjs}^d & -(B_{\Gamma_1}^d)^T\\
        0 & B_{\Gamma_1}^d & 0
    \end{pmatrix}.\nonumber
\end{align}
We will show in Section \ref{s4.3} that the subdomain problems are well posed, i.e., the matrix $M$ is invertible. The interface problem \eqref{e4.11} corresponds to the Schur complement
of \eqref{e4.12}:
\begin{align}\label{e4.13}
    S_\lambda\blambda_h= (LM^{-1}L^T-D)\blambda_h = LM^{-1}F.
\end{align}
We will establish in Section~\ref{s4.4} that the interface bilinear form $s_\lambda(\cdot,\cdot)$, while nonsymmetric, is positive definite. This allows us to utilize GMRES to solve the problem \eqref{e4.13} where each iteration requires evaluating the action of 
\begin{align*}
    M^{-1} = \begin{pmatrix}
        M^{-1}_1&&\\
        & \ddots & \\
        & & M_N^{-1}
    \end{pmatrix},
\end{align*}
with $M_i$ the system matrix corresponding to a Stokes or Biot subdomain problem on $\Omega_i$.

\subsection{Well-posedness of the subdomain problems}\label{s4.3}
We denote the mesh on a subdomain $\Omega_i$ as $\mathcal{T}_{h,i}$, $1\leq i\leq N$. We assume that the subdomain partitions $\mathcal{T}_{h,i}$ are shape-regular, in the sense that there exists a constant $\sigma > 0$ independent of $N$ such that
\begin{align}\label{e4.14}
    \forall\ 1\leq i\leq N,\quad \forall\ j:\ |\Gamma_\ij|>0,\quad \frac{D_i}{\rho_i}\leq \sigma,\quad \frac{D_i}{\rho_{ij}}\leq \sigma,
\end{align}
where $\rho_i$, respectively $\rho_{ij}$, is the diameter of the largest ball contained in $\Omega_i$, respectively $\Gamma_\ij$, and $D_i = \text{diam}(\Omega_i)$. We assume quasiuniformity for the traces of the subdomain grids on the interfaces. 
We also assume that there exists positive constants $K_{\min},\ K_{\max}, A_{\min},\ A_{\max}$ such that
\begin{align}\label{e4.16}
\begin{split}
   K_{\min}|\xi|^2\leq \xi^T\bK(\boldsymbol{x})\xi\leq K_{\max}|\xi|^2,\quad A_{\min}|\xi|^2&\leq \xi^TA(\boldsymbol{x})\xi\leq A_{\max}|\xi|^2\quad \forall\  \boldsymbol{x}\in \Omega_p,\ \forall\ \xi\in \mathbb{R}^d.
    \end{split}
\end{align}
Korn's inequality \eqref{e3.11} and \eqref{e4.14} imply
\begin{align}\label{e4.17}
    \|\bD(\bv_\fhi)\|_{\Omega_i}^2+\frac{1}{h}\|\bv_\fhi\|_{\Gamma_i^D}^2\geq C_{K}\|\bv_\fhi\|_{1,\Omega_i}^2\quad \forall\ \bv_\fhi\in\bH^1(\Omega_i),
\end{align}
where $C_{K}=C(\sigma,d)>0$.

\begin{lemma}\label{lemma4.1}
    Let $|\Gamma_i^D| > 0$ for $1\leq i\leq N_f$ and $\gamma > 2\mu C_I^2$, then
    \begin{align*}
    a_{f,i}(\bu_\fhi,\bu_\fhi)\geq C\|\bu_\fhi\|^2_{1,\Omega_i},
\end{align*}
where $C=C(\sigma,d,\mu)$.
\end{lemma}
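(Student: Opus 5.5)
The plan is to repeat the argument of Lemma~\ref{lemma3.1} on a single Stokes subdomain $\Omega_i$, with the global Korn inequality \eqref{e3.11} replaced by its subdomain version \eqref{e4.17}. Here $a_{f,i}$ is the restriction of \eqref{e3.9a} to $\Omega_i$. Its Nitsche terms live only on $\Gamma_i^D = \partial\Omega_i\cap\Gamma_f^D$, which has positive measure by assumption. Hence
\begin{align*}
a_{f,i}(\bu_\fhi,\bu_\fhi) = 2\mu\|\bD(\bu_\fhi)\|_{\Omega_i}^2 - 4\mu\langle \bD(\bu_\fhi)\bn_i,\bu_\fhi\rangle_{\Gamma_i^D} + \frac{\gamma}{h}\|\bu_\fhi\|_{\Gamma_i^D}^2 .
\end{align*}

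The first step is to bound the cross term. I will apply the inverse estimate \eqref{e3.12} elementwise on the faces $e\subset\Gamma_i^D$ to the piecewise polynomial $\bD(\bu_\fhi)$. Summing over the elements $E$ adjacent to $\Gamma_i^D$ then gives $h^{1/2}\|\bD(\bu_\fhi)\bn_i\|_{\Gamma_i^D}\le C_I^{1/2}\|\bD(\bu_\fhi)\|_{\Omega_i}$, up to the same constant bookkeeping used in Lemma~\ref{lemma3.1}. Cauchy--Schwarz and Young's inequality with parameter $\delta$ then yield
\begin{align*}
a_{f,i}(\bu_\fhi,\bu_\fhi)\ge 2\mu\Big(1-\frac{4\mu}{\delta}\Big)\|\bD(\bu_\fhi)\|_{\Omega_i}^2+\frac1h\Big(\gamma-\frac{\delta C_I^2}{2}\Big)\|\bu_\fhi\|_{\Gamma_i^D}^2 .
\end{align*}
Since $\gamma>2\mu C_I^2$, I will choose $\delta$ with $4\mu<\delta<2\gamma/C_I^2$, exactly as in the global case. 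This makes both coefficients positive, and the resulting lower bound $c\big(\|\bD(\bu_\fhi)\|_{\Omega_i}^2+h^{-1}\|\bu_\fhi\|_{\Gamma_i^D}^2\big)$ has a constant $c$ depending only on $\mu$, $\gamma$ and $C_I$.

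The final step applies \eqref{e4.17} to obtain $a_{f,i}(\bu_\fhi,\bu_\fhi)\ge c\,C_K\|\bu_\fhi\|_{1,\Omega_i}^2$, with $C_K=C(\sigma,d)$.

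The main point to check is uniformity: the constants must not depend on $i$ or $N$. For $C_K$ this is exactly what the shape-regularity assumption \eqref{e4.14} provides through \eqref{e4.17}. For $C_I$ it follows from the fact that the subdomain meshes are restrictions of the shape-regular, quasiuniform-on-$\Gamma_f^D$ global mesh. Because the inverse estimate is purely local to the elements touching $\Gamma_i^D$, the same $C_I$ serves every subdomain, and the hypothesis $\gamma>2\mu C_I^2$ carries over unchanged.
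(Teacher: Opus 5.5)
Your proposal is correct and matches the paper's proof, which simply says to rerun the argument of Lemma~\ref{lemma3.1} with the subdomain Korn inequality \eqref{e4.17} in place of \eqref{e3.11}; your choice $4\mu<\delta<2\gamma/C_I^2$ is the same as the paper's. You are also right that the constant depends on $\gamma$ and $C_I$ as well, not only on $\sigma,d,\mu$ as the statement says, which agrees with the dependence stated in Lemma~\ref{lemma3.1}.
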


\begin{proof}
Using \eqref{e4.17}, the proof proceeds as in Lemma \ref{lemma3.1}.
\end{proof}
\begin{remark}\label{r2}
The above Lemma proves that ker $ a_{f,i}$ is trivial for $1\leq i\leq N_f$ if $|\Gamma_i^D| > 0$, where
\begin{align*}
    \kai=\{\bv_\fhi\in\bV_\fhi\ |\ a_{f,i}(\bv_\fhi,\bv_\fhi) = 0\}.
\end{align*}
It is possible to have non trivial $\kai$ for a subdomain $\Omega_i$ if $|\Gamma_i^D| = 0$. In this case, the Stokes matrices $M_i$, $1\leq i\leq N_f$ would be singular and the Stokes velocity $\bv_\fhi$ would be determined up to an element of $\kai$. For a complete characterization of the $\kai$, see \cite[Lemma 4.1]{Changqing}. Furthermore, the appropriate representative in the quotient space $\bV_\fhi/\kai$ can be chosen using a one-level FETI method \cite{feti}. This involves solving a coarse space problem, which projects the interface onto a subspace orthogonal to the kernel of the subdomain operators, see \cite[Section 4.3]{Changqing}. We assume for the sake of simplicity that $|\Gamma_i^D| > 0$ so that $\kai = \{\boldsymbol{0}\}$. For the case of subdomain problems with nontrivial kernel, the arguments can be easily extended to $\bV_\fhi/\kai$ as shown in \cite{Changqing}.
\end{remark}

\begin{theorem}\label{theorem4.1}
Under the assumptions of Lemma~\ref{lemma4.1}, the Stokes subdomain problems \eqref{e4.5} and \eqref{e4.7} have a unique solution.
\end{theorem}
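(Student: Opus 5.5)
Both \eqref{e4.5} and \eqref{e4.7} are square, finite-dimensional linear saddle point systems on $\bV_\fhi\times\rW_\fhi$ with the same left-hand side; only the data differ. It therefore suffices to show that the homogeneous problem has only the trivial solution. Equivalently, the matrix $M_i$ built from $a_{f,i}+a_{\bjs,i}(\cdot,\mathbf{0};\cdot,\mathbf{0})$ and $b_{f,i}$ is invertible. A Babu\v{s}ka--Brezzi argument would work just as well, but the uniqueness argument is the simplest and mirrors the proof of the global well-posedness lemma.

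\textbf{Energy step.} Take zero data, set $\bv_\fhi=\bu_\fhi$ in \eqref{e4.5a} and $w_\fhi=p_\fhi$ in \eqref{e4.5b}, and subtract. The $b_{f,i}$ terms cancel, leaving
\begin{align*}
a_{f,i}(\bu_\fhi,\bu_\fhi)+\sum_{l=1}^{d-1}\Big\<\mu\alpha_{\bjs}\sqrt{\bK_l^{-1}}\,\bu_\fhi\cdot\bt_{f,l},\bu_\fhi\cdot\bt_{f,l}\Big\>_{\Gamma_\fp^i}=0 .
\end{align*}
The BJS term is nonnegative. By Lemma~\ref{lemma4.1} (this is where $|\Gamma_i^D|>0$ and $\gamma>2\mu C_I^2$ enter), the first term dominates $C\|\bu_\fhi\|_{1,\Omega_i}^2$. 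Hence $\bu_\fhi=\mathbf{0}$.

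\textbf{Pressure step.} With $\bu_\fhi=\mathbf{0}$, the momentum equation reduces to $b_{f,i}(\bv_\fhi,p_\fhi)=0$ for all $\bv_\fhi\in\bV_\fhi$. Restrict to $\bv_\fhi\in\bV_\fhi^{0,\gamma_i}$ with $\gamma_i=\Gamma_i^D$. On this subspace the Nitsche boundary term in $b_{f,i}$ vanishes, so $b_{f,i}(\bv_\fhi,p_\fhi)=-(\nabla\cdot\bv_\fhi,p_\fhi)_{\Omega_i}$. The local inf-sup condition \eqref{e4.1} then gives $\|p_\fhi\|_{\Omega_i}=0$.

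\textbf{Main obstacle.} The one point needing care is the choice of $\gamma_i$: it must be a proper subset of $\partial\Omega_i$ so that \eqref{e4.1} holds without the zero-mean pressure constraint. If $\Gamma_i^D=\partial\Omega_i$ (the subdomain touches no interface), the pressure would be determined only up to a constant. However, every subdomain in a connected decomposition with $N>1$ has an interface, so $\Gamma_i^D\neq\partial\Omega_i$. If there were no interface, one would need the usual argument, but the case $N=1$ lies outside the domain-decomposition setting. Uniqueness then yields existence for both \eqref{e4.5} and \eqref{e4.7}.
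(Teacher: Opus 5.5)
Your proof is correct and uses the same two ingredients as the paper's proof. These are the coercivity of $a_{f,i}$ from Lemma~\ref{lemma4.1}, supplemented by the nonnegativity of the BJS term, which you note explicitly, and the inf-sup condition \eqref{e4.1} with $\gamma_i=\Gamma_i^D$, on whose subspace $b_{f,i}(\bv_\fhi,w_\fhi)=-(\nabla\cdot\bv_\fhi,w_\fhi)_{\Omega_i}$. The paper cites classical saddle-point (Brezzi) theory, whereas you carry out the equivalent finite-dimensional uniqueness argument directly, and your check that $\Gamma_i^D\neq\partial\Omega_i$ spells out a point the paper leaves implicit.
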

\begin{proof}
Since $|\Gamma_i^D|>0$, the inf-sup condition \eqref{e4.1} holds with $\gamma_i = \Gamma_i^D$. Moreover, for $\bv_\fhi\in\bV_\fhi^{0,\Gamma_i^D}$, it holds that $b_{f,i}(\bv_\fhi,w_\fhi) = -(\nabla\cdot \bv_\fhi,w_\fhi)_{\Omega_i}$. Then, the existence and uniqueness of a solution to \eqref{e4.5} and \eqref{e4.7} follows from the classical saddle point problem theory \cite{BrezziFortin}, using the coercivity of $a_{f,i}$, cf. Lemma \ref{lemma4.1}, and the inf-sup condition \eqref{e4.1} with $\gamma_i = \Gamma_i^D$.
\end{proof}

\begin{theorem}
    The Biot subdomain problems \eqref{e4.6} and \eqref{e4.8} have a unique solution. \label{theorem4.2}
\end{theorem}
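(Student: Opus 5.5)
Since \eqref{e4.6} and \eqref{e4.8} are square finite-dimensional linear systems on $\mathbb{X}_{ph,i}\times\bV_{dh,i}\times\mathbb{Q}_{ph,i}\times\bV_{ph,i}\times\rW_{ph,i}$, and both have the same left-hand side, I will show existence and uniqueness by proving that the homogeneous problem (zero data) has only the trivial solution. This mirrors the uniqueness argument for the global problem \eqref{e3.10}, localized to $\Omega_i$.

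\textbf{Energy step.} Set the data to zero and test with $\btau_{ph,i}=\bsigma_{ph,i}$, $(\bxi_{ph,i},\bchi_{ph,i})=(\disp_{ph,i},\bgamma_{ph,i})$, $\bv_{ph,i}=\bu_{ph,i}$ and $w_{ph,i}=p_{ph,i}$, then add the four equations. The $b_{e,i}$ terms cancel between \eqref{e4.6a} and \eqref{e4.6b}, and the $b_{p,i}$ terms cancel between \eqref{e4.6c} and \eqref{e4.6d}. The two $a_{e,i}$ contributions combine into $a_{e,i}(\bsigma,p;\bsigma,p)=\|A^{1/2}(\bsigma_{ph,i}+\alpha p_{ph,i}\bI)\|^2_{\Omega_i}$. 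This gives
\[
\|A^{1/2}(\bsigma_{ph,i}+\alpha p_{ph,i}\bI)\|^2_{\Omega_i}+\mu\|\bK^{-1/2}\bu_{ph,i}\|^2_{\Omega_i}+s_0\|p_{ph,i}\|^2_{\Omega_i}=0.
\]
Using $s_0>0$ and the bounds \eqref{e4.16} on $A$ and $\bK$, I conclude $p_{ph,i}=0$, $\bu_{ph,i}=\mathbf{0}$, and then $\bsigma_{ph,i}=0$.

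\textbf{Recovering the remaining variables.} With these zeros, \eqref{e4.6a} reduces to $b_{e,i}(\btau_{ph,i};\disp_{ph,i},\bgamma_{ph,i})=0$ for all $\btau_{ph,i}\in\mathbb{X}_{ph,i}$. I then apply the inf-sup condition \eqref{e4.3} with $\gamma_i=\emptyset$; it suffices to restrict the supremum to a subspace such as $\mathbb{X}^{0,\gamma_i}_{ph,i}$. This yields $\disp_{ph,i}=\mathbf{0}$ and $\bgamma_{ph,i}=\mathbf{0}$.

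\textbf{Main point to check.} The key issue is that \eqref{e4.3} is valid for $\gamma_i$ satisfying $\gamma_i\cup\Gamma_i^{N_s}\neq\partial\Omega_i$. This may fail for a floating subdomain with $\Gamma_i^{N_s}=\partial\Omega_i\cap\partial\Omega$ if the choice of $\gamma_i$ is careless. However, $\partial\Omega_i$ always contains interface parts $\Gamma^i_\pp\cup\Gamma^i_\fp$, and on these the test stress has a free normal trace; this is exactly the Dirichlet-type displacement data entering through the multiplier. So taking $\gamma_i=\emptyset$ (or $\gamma_i=\Gamma_i^{D_d}$) is admissible and the mixed elasticity inf-sup holds, giving the rigid-motion-free uniqueness.

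No analogous issue arises for the Darcy part, because the pressure was already eliminated in the energy step thanks to $s_0>0$. Existence follows from uniqueness since the system is square.
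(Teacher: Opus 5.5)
Your proof is correct and follows the paper's argument. You reduce to uniqueness for the square system, test the homogeneous problem with the solution so that the $b_{e,i}$ and $b_{p,i}$ terms cancel and the energy identity forces $\bsigma_{ph,i}=\mathbf{0}$, $\bu_{ph,i}=\mathbf{0}$, $p_{ph,i}=0$, and then invoke the elasticity inf-sup condition \eqref{e4.3} to get $\disp_{ph,i}=\mathbf{0}$ and $\bgamma_{ph,i}=\mathbf{0}$. Your explicit choice $\gamma_i=\emptyset$ is admissible because the interface part of $\partial\Omega_i$, where the normal trace of $\btau_{ph,i}$ is unconstrained, has positive measure; it makes precise a detail the paper leaves implicit.
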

\begin{proof}
To show the well posedness of the Biot subdomain problems \eqref{e4.6}, \eqref{e4.8} it is enough to show that the solution to the linear system is unique. For the homogeneous problem
\begin{subequations}\label{e4.18}
    \begin{flalign}
        &a_{e,i}(\bsigma_{ph,i}, p_{ph,i};\btau_{ph,i},0)+b_{e,i}(\disp_{ph,i},\bgamma_{ph,i};\btau_{ph,i})=0,\label{e4.18a}&&\\
    &b_{e,i}(\bxi_{ph,i},\bchi_{ph,i};\bsigma_{ph,i}) = 0,\label{e4.18b}&&\\ 
        &a_{p,i}(\bu_{ph,i},\bv_{ph,i})-b_{p,i}(\bv_{ph,i}, p_{ph,i})=0,\label{e4.18c}&&\\
    &a_{p,i}^p(p_{ph,i},w_{ph,i}) + a_{e,i}(\bsigma_{ph,i}, p_{ph,i};\mathbf{0},w_{ph,i})+b_{p,i}(\bu_{ph,i},w_{ph,i}) = 0,\label{e4.18d}&&
    \end{flalign}
\end{subequations}
we take $\btau_{ph,i}=\bsigma_{ph,i},\ \bxi_{ph,i}=\disp_{ph,i},\ \bchi_{ph,i}=\bgamma_{ph,i},\ \bv_{ph,i}=\bu_{ph,i}, \ w_{ph,i}=p_{ph,i}$, add \eqref{e4.18a}, \eqref{e4.18c}, \eqref{e4.18d}, and using \eqref{e4.18b}, we obtain
\begin{align*}
     &\|A^{1/2}(\bsigma_{ph,i}+\alpha p_{ph,i})\bI\|_{\Omega_i}^2+\|\bK^{-1/2}\bu_{ph,i}\|_{\Omega_i}^2 +\|s_0^{1/2}p_{ph,i}\|_{\Omega_i}^2= 0.
\end{align*}
From \eqref{e4.16}, it follows that $\bsigma_{ph,i}= \mathbf{0},\ \bu_{ph,i}= \mathbf{0},\ p_{ph,i} = 0$ on $\Omega_i$, and using the elasticity inf-sup condition \eqref{e4.3} we get $\disp_{ph,i}= \mathbf{0},\ \bgamma_{ph,i} = \mathbf{0}$.  
\end{proof}

\subsection{Analysis of the interface operator}\label{s4.4}

\begin{theorem}\label{theorem4.3}
    The bilinear form $s_\lambda(\cdot,\cdot)$ is positive definite on $\bLambda_h\times\bLambda_h$.
\end{theorem}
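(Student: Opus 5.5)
The plan is to compute $s_\lambda(\blambda_h,\blambda_h)$ by testing the subdomain problems \eqref{e4.5} and \eqref{e4.6} with their own solutions. This rewrites every interface term in $s_\lambda$ as a subdomain energy; positivity then follows from the coercivity in Lemma~\ref{lemma4.1}, and definiteness from the inf-sup conditions as in the uniqueness proof for \eqref{e3.10}.

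\textbf{Step 1: the Stokes subdomains.} For $1\le i\le N_f$, take $\bv_\fhi=\bu^*_\fhi(\blambda_h)$ in \eqref{e4.5a}. By \eqref{e4.5b}, the pressure term vanishes. Summing over $i$, the right-hand side becomes
\[
-b_\ff(\bu^*_\fh,\blambda^f_h)-\<\lambda^p_h,\bu^*_\fh\cdot\bn_f\>_{\Gamma_\fp}+a_{\bjs}(\mathbf{0},\blambda^d_h;\bu^*_\fh,\mathbf{0}),
\]
and the sign $\chi$ is chosen so that the first term is exactly this jump form.

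\textbf{Step 2: the Biot subdomains.} For $N_f+1\le i\le N$, take $\btau=\bsigma^*$, $\bv=\bu^*$, $w=p^*$, and use \eqref{e4.6b} with $(\bxi,\bchi)=(\disp^*,\bgamma^*)$. Adding \eqref{e4.6a}, \eqref{e4.6c}, \eqref{e4.6d} and summing over $i$ gives
\[
a_e(\bsigma^*,p^*;\bsigma^*,p^*)+a_p(\bu^*,\bu^*)+a_p^p(p^*,p^*)=b_\ee(\bsigma^*_\ph,\blambda^d_h)+b_{\Gamma_2}(\bsigma^*_\ph,\blambda^d_h)-b_\pp(\bu^*_\ph,\lambda^p_h)-\<\lambda^p_h,\bu^*_\ph\cdot\bn_p\>_{\Gamma_\fp}.
\]
This requires that the orientation convention for $\bn_p$ on $\Gamma_\pp^i$ in \eqref{e4.6} be consistent with the jumps in $b_\ee$ and $b_\pp$.

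\textbf{Step 3: substitution into $s_\lambda$.} Set $\bmu_h=\blambda_h$ in \eqref{e4.9}. In $b_{\Gamma_1}$ the term $\blambda^d_h\cdot\bn_p$ paired with $\lambda^p_h$ cancels against $+b_{\Gamma_1}(\mathbf{0},\mathbf{0},\blambda^d_h;\lambda^p_h)$. The $a_{\bjs}(\bu^*,\blambda^d;\mathbf{0},\blambda^d)$ term combines with the Stokes terms $a_{\bjs,i}(\bu^*,\mathbf{0};\bu^*,\mathbf{0})-a_{\bjs,i}(\mathbf{0},\blambda^d;\bu^*,\mathbf{0})$ into the full quadratic form $a_{\bjs}(\bu^*_\fh,\blambda^d_h;\bu^*_\fh,\blambda^d_h)$, because $a_{\bjs}$ is bilinear in the difference $(\bu-\blambda^d)$. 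The expected identity is
\[
s_\lambda(\blambda_h,\blambda_h)=\widetilde a_f(\bu^*_\fh,\bu^*_\fh)+a_{\bjs}(\bu^*_\fh,\blambda^d_h;\bu^*_\fh,\blambda^d_h)+a_e(\bsigma^*,p^*;\bsigma^*,p^*)+a_p(\bu^*_\ph,\bu^*_\ph)+a_p^p(p^*,p^*)\ge 0.
\]
The main obstacle is the sign bookkeeping: the signs in \eqref{e4.9} (note the $-b_{\Gamma_1}$ and $+b_{\Gamma_2}$) and the choice of $\chi$ must match those in \eqref{e4.5} and \eqref{e4.6}. I would verify this term by term, interface type by interface type.

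\textbf{Step 4: definiteness.} Suppose $s_\lambda(\blambda_h,\blambda_h)=0$. Lemma~\ref{lemma4.1} and \eqref{e4.16} give $\bu^*_\fh=\mathbf{0}$, $\bsigma^*_\ph=\mathbf{0}$, $\bu^*_\ph=\mathbf{0}$, $p^*_\ph=0$, and $(\bu^*_\fh-\blambda^d_h)\cdot\bt_{f,l}=0$, hence $\blambda^d_h\cdot\bt_{f,l}=0$ on $\Gamma_\fp$.
\begin{itemize}
\item[(a)] With these, \eqref{e4.6c} reduces to $\<\lambda^p_h,\bv_{ph,i}\cdot\bn_p\>=0$ on $\Gamma^i_\pp\cup\Gamma^i_\fp$. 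Since $\Lambda^p_h$ is the normal trace of $\bV_{ph,i}$, choosing $\bv_{ph,i}\cdot\bn_p=\lambda^p_h$ on $\Gamma^i_\pp\cup\Gamma^i_\fp$ gives $\lambda^p_h=0$.
\item[(b)] Equation \eqref{e4.6a} reduces to $b_{e,i}(\disp^*,\bgamma^*;\btau)=\<\btau\bn_p,\blambda^d_h\>$ on $\Gamma^i_\pp\cup\Gamma^i_\fp$. Restricting $\btau$ to $\mathbb{X}^{0,\gamma_i}_{ph,i}$ with $\gamma_i=\Gamma^i_\pp\cup\Gamma^i_\fp$, the inf-sup condition \eqref{e4.3} gives $\disp^*=\mathbf{0}$ and $\bgamma^*=\mathbf{0}$. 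Then choosing $\btau\bn_p=\blambda^d_h$ gives $\blambda^d_h=\mathbf{0}$.
\item[(c)] Finally, \eqref{e4.5a} reduces to $b_{f,i}(\bv,p^*_\fhi)=-\<\blambda^f_h,\chi\bv\>_{\Gamma^i_\ff}$. Testing with $\bv\in\bV^{0,\Gamma_i^D\cup\Gamma^i_\ff}_{\fhi}$ and applying \eqref{e4.1} gives $p^*_\fhi=0$. Then taking $\bv|_{\Gamma^i_\ff}=\chi\blambda^f_h$, which is possible because $\bLambda^f_h$ is the trace of $\bV_\fh$ and Nitsche's method imposes no constraint on $\Gamma_f^D$, gives $\blambda^f_h=\mathbf{0}$.
\end{itemize}
Hence $\blambda_h=\mathbf{0}$, and $s_\lambda$ is positive definite.
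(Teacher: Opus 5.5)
Your Steps 1--3 reproduce the paper's energy identity \eqref{e4.23}, and Step 4 follows the paper's order ($\lambda_h^p$, then $\blambda_h^d$, then $\blambda_h^f$). However, Step 4(b)--(c) has a genuine gap. You invoke the local inf-sup conditions on \emph{every} subdomain with $\gamma_i$ equal to all of its interfaces, and this is not admissible for floating subdomains.

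\textbf{Biot subdomains.} Condition \eqref{e4.3} requires $\gamma_i\cup\Gamma_i^{N_s}\neq\partial\Omega_i$. With $\gamma_i=\Gamma^i_\pp\cup\Gamma^i_\fp$ it therefore applies only when $|\Gamma_i^{D_d}|>0$. Take instead an interior Biot subdomain, or one whose exterior boundary lies in $\Gamma_p^{N_s}$. Then $\disp^*_{ph,i}=\mathbf c$, $\bgamma^*_{ph,i}=\mathbf 0$, $\blambda_h^d=\mathbf c$ solves \eqref{e4.24b} on $\Omega_i$, because $(\nabla\cdot\btau_{ph,i},\mathbf c)_{\Omega_i}=\langle\btau_{ph,i}\bn_i,\mathbf c\rangle_{\Gamma^i_\pp\cup\Gamma^i_\fp}$ in that case. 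The extra information $\blambda_h^d\cdot\bt_{f,l}=0$ on $\Gamma_\fp$ removes at most the tangential translations.

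\textbf{Stokes subdomains.} Similarly, \eqref{e4.1} requires $\gamma_i\neq\partial\Omega_i$. For a Stokes subdomain with neither Neumann boundary nor Stokes--Biot interface, $\Gamma_i^D\cup\Gamma^i_\ff=\partial\Omega_i$; an example is any Stokes subdomain not adjacent to $\Gamma_\fp$ when $\Gamma_f^N=\emptyset$. The Nitsche term does not rescue you, since $b_{f,i}(\bv_\fhi,c)=-c\int_{\partial\Omega_i\setminus\Gamma_i^D}\bv_\fhi\cdot\bn_i$. So \eqref{e4.24a} on such an $\Omega_i$ is also satisfied by $p^*_\fhi=c$ with $\chi\blambda_h^f=c\,\bn_i$ on flat interfaces. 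Purely local reasoning cannot conclude in either case.

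\textbf{The missing idea: propagation between subdomains, as in the paper.}
\begin{itemize}
\item For $\blambda_h^d$, start from a Biot subdomain with $|\Gamma_i^{D_d}|>0$. There your argument is valid and gives $\blambda_h^d=\mathbf 0$ on $\Gamma^i_\pp\cup\Gamma^i_\fp$.
\item For a neighbor $\Omega_j$ with $|\Gamma_{i,j}|>0$, apply \eqref{e4.3} with $\gamma_j=\partial\Omega_j\setminus\Gamma_{i,j}$. This is admissible because $\Gamma_{i,j}$ is left free. For such $\btau_{ph,j}$ the right-hand side of \eqref{e4.24b} vanishes, since $\blambda_h^d=\mathbf 0$ on $\Gamma_{i,j}$.
\item This yields $\disp^*_{ph,j}=\mathbf 0$ and $\bgamma^*_{ph,j}=\mathbf 0$, and then $\blambda_h^d=\mathbf 0$ on all of $\Gamma^j_\pp\cup\Gamma^j_\fp$. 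Iterate through the connected Biot region.
\item For $\blambda_h^f$, start from a Stokes subdomain with $|\Gamma^i_\fp|>0$ and take $\gamma_i=\partial\Omega_i\setminus\Gamma_\fp$. The right-hand side of \eqref{e4.24a} vanishes on $\Gamma^i_\fp$ because $\lambda_h^p=0$ and $\blambda_h^d=\mathbf 0$.
\item Then propagate across Stokes--Stokes interfaces with $\gamma_j=\partial\Omega_j\setminus\Gamma_{i,j}$ in the same way.
\end{itemize}

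\textbf{Minor point in Step 3.} With the BJS sign as printed in \eqref{e4.5a}, your three BJS terms sum to the weighted $\|\bu^*_\fh\cdot\bt_{f,l}\|^2+\|\blambda_h^d\cdot\bt_{f,l}\|^2$. They do not give $a_{\bjs}(\bu^*_\fh,\blambda^d_h;\bu^*_\fh,\blambda^d_h)$. The full quadratic form arises when \eqref{e4.5a} carries $-a_{\bjs,i}(\mathbf 0,\blambda_h^d;\bv_\fhi,\mathbf 0)$, which is the sign consistent with \eqref{e4.4a}. Both expressions are nonnegative, so this does not affect Step 4.
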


\begin{proof}
We take $\bv_\fhi = \bu^*_\fhi(\blambda_h),\ w_\fhi = p_\fhi^*(\blambda_h),\ \btau_{ph,i}=\bsigma_{ph,i}^*(\blambda_h),\ \bxi_{ph,i}=\disp_{ph,i}^*(\blambda_h), \bchi_{ph,i}=\gamma_{ph,i}^*(\blambda_h),\linebreak \bv_{ph,i}=\bu_{ph,i}^*(\blambda_h)$, and $w_{ph,i}=p_{ph,i}^*(\blambda_h)$ in problems \eqref{e4.5} and \eqref{e4.6}. Adding \eqref{e4.5a}, \eqref{e4.6a}, \eqref{e4.6c}, \eqref{e4.6d} and using \eqref{e4.5b}, \eqref{e4.6b}, \eqref{e4.9}, and \eqref{e4.10}, we obtain 
\begin{align}\label{e4.23}
  \begin{split}     s_\lambda(\blambda_h,\blambda_h)
    & = \sum_{i=1}^{N_f}\Big(a_{f,i}(\bu_\fhi^*(\blambda_h),\bu_\fhi^*(\blambda_h))
    + a_{\bjs,i}(\bu_\fhi^*(\blambda_h)-\blambda^d_h, \bu_\fhi^*(\blambda_h)-\blambda^d_h)\Big)\\
    & \qquad +\sum_{i=N_f+1}^{N}\Big(a_{e,i}(\bsigma_{ph,i}^*(\blambda_h), p_{ph,i}^*(\blambda_h);\bsigma_{ph,i}^*(\blambda_h), p_{ph,i}^*(\blambda_h))
    + a_{p,i}(\bu_{ph,i}^*(\blambda_h),\bu_{ph,i}^*(\blambda_h))\\
& \qquad\qquad +a_{p,i}^p(p_{ph,i}^*(\blambda_h),p_{ph,i}^*(\blambda_h))\Big)\\
    & = \sum_{i=1}^{N_f} \Big(a_{f,i}(\bu_\fhi^*(\blambda_h), \bu_\fhi^*(\blambda_h))
    +    \sum_{l=1}^{d-1}\mu\alpha_{\bjs}\sqrt{\bK^{-1}_l}
    \|(\bu_\fhi^*(\blambda_h)-\blambda^d_h)\cdot\bt_{f,l}\|_{\Gamma_\fp^i}^2\Big)\\
       &\quad +\sum_{i=N_f+1}^N\Big(\|A^{1/2}(\bsigma_{ph,i}^*(\blambda_h)+\alpha p_{ph,i}^*(\blambda_h)\bI)\|_{\Omega_i}^2+\|\bK^{-1/2}\bu_{ph,i}^*(\blambda_h)\|_{\Omega_i}^2+\|s_0^{1/2}p_{ph,i}^*(\blambda_h)\|_{\Omega_i}^2\Big),\\
        \end{split}
    \end{align}
    which implies that $s_\lambda(\blambda_h,\blambda_h)\geq 0$. We now show that if $s_\lambda(\blambda_h,\blambda_h) = 0$ then $\blambda_h=0$. Assume that $s_\lambda(\blambda_h,\blambda_h) = 0$. From \eqref{e4.23} and using the coercivity of $a_{f,i}$ (see Lemma \ref{lemma4.1}), it follows that 
    \begin{align*}
        &\bu_\fhi^*(\blambda_h) = \mathbf{0} \quad \text{on }\Omega_i,\ 1\leq i\leq N_f,\\
        &\bu_{ph,i}^*(\blambda_h)=\mathbf0,\ \bsigma_{ph,i}^*(\blambda_h)=\mathbf0,\ p_{ph,i}^*(\blambda_h)  = 0\quad \text{on }\Omega_i,\ N_f+1\leq i\leq N,\\
        &(\bu_\fhi^*(\blambda_h)-\blambda^d_h)\cdot\bt_{f,l}=0\quad\text{on }\Gamma_\fp^i,\quad N_f+1\leq i\leq N.
    \end{align*}
Using this in \eqref{e4.5a}, \eqref{e4.6a}, \eqref{e4.6c}, it follows that
\begin{subequations}
\begin{align}\label{e4.24a}
  0 &=b_{f,i}(\bv_\fhi,p^*_\fhi(\blambda_h))+\langle\blambda^f_h,\chi\bv_\fhi\rangle_{\Gamma_\ff^i}
  +\langle \lambda^p_h,\bv_\fhi\cdot\bn_f\rangle_{\Gamma_\fp^i},\\
    0&=-b_{e,i}(\disp_{ph,i}^*(\blambda_h),\bgamma_{ph,i}^*(\blambda_h);\btau_{ph,i})+\langle\btau_{ph,i}\bn_p,\blambda_h^d\rangle_{\Gamma_\pp^i}+\langle\btau_{ph,i}\bn_p,\blambda^d_h\rangle_{\Gamma_\fp^i},\label{e4.24b}\\ 
        0&=\langle\lambda^p_h,\bv_{ph,i}\cdot\bn_p\rangle_{\Gamma_\pp^i}+\langle\lambda^p_h,\bv_{ph,i}\cdot\bn_p\rangle_{\Gamma_\fp^i}.  \label{e4.24c}
    \end{align}
    \end{subequations}
From the definition of $\Lambda_h^p$ it follows that there exists $\bv_{ph,i}\in\bV_{ph,i}$ such that $\bv_{ph,i}\cdot\bn_p = \lambda_h^p$ on $\Gamma_\pp^i\cup\Gamma_\fp^i$. Using this in \eqref{e4.24c} it follows that $\lambda_h^p = 0$.

We next show that $\blambda_h^d = \mathbf0$. Let $\Omega_i$, $N_f+1\leq i\leq N$, be such that $|\Gamma_i^{D_d}|>0$. From the inf-sup condition \eqref{e4.3} with $\gamma_i = \Gamma_\pp^i\cup\Gamma_\fp^i$ and \eqref{e4.24b} we get $\disp^*_{ph,i}(\blambda_h)= \mathbf{0}$ and $\bgamma^*_{ph,i}(\blambda_h)=\mathbf0$. Now, taking $\btau_{ph,i}\bn_p = \blambda_h^d$ in \eqref{e4.24b} yields $\blambda_h^d = \mathbf0$ on $\Gamma_\pp^i\cup\Gamma_\fp^i$.

    Next, consider a subdomain $\Omega_j$, $N_f+1\leq j\leq N$, adjacent to $\Omega_i$, i.e., $|\Gamma_{i,j}|>0$. From the inf-sup condition \eqref{e4.3} with $\gamma_j = \partial\Omega_j\setminus\Gamma_{i,j}$ and \eqref{e4.24b} we obtain $\disp^*_{ph,j}(\blambda_h)= \mathbf{0}$ and $\bgamma^*_{ph,j}(\blambda_h)=\mathbf0$. Now, taking $\btau_{ph,j}\bn_p = \blambda_h^d$ in \eqref{e4.24b} yields $\blambda_h^d = 0$ on $\Gamma_\pp^j\cup\Gamma_\fp^j$. Iterating over all subdomains yields $\blambda_h^d = \mathbf0$.    

    We proceed with showing that $\blambda_h^f = \mathbf0$. 
    Let $\Omega_i$, $1\leq i\leq N_f$ a Stokes subdomain such that $|\Gamma_\fp^i|>0$. Using $\gamma_i = \partial\Omega_i\setminus\Gamma_\fp$ in the inf-sup condition \eqref{e4.1}, $b_{f,i}(\bv_\fhi,w_\fhi) = -(\nabla\cdot\bv_\fhi,w_\fhi)_{\Omega_i}$ for $\bv_\fhi\in\bV_\fhi^{0,\gamma_i}$, and \eqref{e4.24a}, we obtain
    \begin{align*}
        \|p_\fhi^*(\blambda_h)\|_{\Omega_i}&\leq  C\sup_{\bv_\fhi\in \bV^{0,\gamma_i}_\fhi\setminus \{\boldsymbol{0}\}} \frac{b_{f,i}(\bv_\fhi,p_\fhi^*(\blambda_h))}{\|\bv_\fhi\|_{1,\Omega_i}}\\
        &=C\sup_{\bv_\fhi\in \bV^{0,\gamma_i}_\fhi\setminus \{\boldsymbol{0}\}} \frac{\langle \lambda^p_h,\bv_\fhi\cdot\bn_f\rangle_{\Gamma_\fp^i}}{\|\bv_\fhi\|_{1,\Omega_i}} = 0,
    \end{align*}
    where we used that $\bu_\fhi^*(\blambda_h) = 0$ on $\Omega_i$, and that $\lambda_h^p = 0$ on $\Gamma_\fp^i$. Taking $\bv_\fhi\in\bV_\fhi$ such that $\bv_\fhi = \blambda_h^f$ on $\Gamma_\ff^i$ in \eqref{e4.24a}, we conclude that $\blambda_h^f = 0$ on $\Gamma_\ff^i$.

    Next, consider a subdomain $\Omega_j$, $1\leq j\leq N_f$ adjacent to $\Omega_j$, i.e., $|\Gamma_\ij|>0$. Letting $\gamma_j = \partial\Omega_j\setminus\Gamma_\ij$ in the inf-sup condition \eqref{e4.1}, we obtain as above that $p_{fh,j}^*(\blambda_h) = 0$. Taking $\bv_\fhj\in\bV_\fhj$ such that $\bv_\fhj = \blambda_h^f$ on $\Gamma_\ff^j$ in \eqref{e4.24a}, it follows that $\blambda_h^f = 0$ on $\Gamma_\ff^j$. Iterating over all subdomains $\Omega_i$, $1\leq i\leq N_f$ yields $\blambda_h^f = \mathbf0$..
\end{proof}

\begin{remark}\label{r3}
Since $\bLambda_h^f = \bV_h^f|_{\Gamma_{ff}}$, it is possible to directly choose an extension $\bv_\fhi\in\bV_\fhi$ such that $\bv_\fhi = \blambda_h^f$ on $\Gamma_\ff^i$ as shown in the proof above. The choice of $\bLambda_h^f$ is possible, since the Dirichlet boundary condition for the the Stokes system is imposed weakly using Nitsche's method. If the Dirichlet boundary condition was imposed essentially, we would need to ensure that $\bv_\fhi\in\bV_\fhi\subset\bH^1_{0,\Gamma_i^D}(\Omega_i)$, which requires a modified choice for $\bLambda_h^f$. We refer the reader to \cite{StokesDarcyMortar} for the analysis in this case. The work in \cite{StokesDarcyMortar} allows for non-matching grids across all interfaces via the use of mortar finite element spaces. Our analysis can be extended to this case. Imposing the boundary condition weakly, as done in the current paper, would help simplify the constructions in \cite{StokesDarcyMortar} .
\end{remark}

Next, we establish spectral bounds for the interface operator. We write the interface bilinear form $s_\lambda$ as a sum of subdomain contributions,
\begin{align*}
    s_\lambda(\blambda_h,\bmu_h) &= \sum_{i = 1}^{N}s_{\lambda,i}(\blambda_h,\bmu_h),
\end{align*}
    where for $1\leq i\leq N_f$,
    \begin{align}
    \begin{split}\label{e4.25}
      s_{\lambda,i}(\blambda_h,\bmu_h) &= -\langle\chi\bu_\fhi^*(\blambda_h),\bmu_h^f\rangle_{\Gamma_\ff^i}
      -\langle\bu_\fhi^*(\blambda_h)\cdot\bn_f,\mu_h^p\rangle_{\Gamma_\fp^i}\\
   &\quad-\sum_{l = 1}^{d-1}\Big\<\mu\alpha_{\bjs}\sqrt{\bK^{-1}_l}(\bu_\fhi^*(\blambda_h)-\blambda_h^d)\cdot\bt_{f,l},\bmu_h^d\cdot\bt_{f,l}\Big\>_{\Gamma_\fp^i},
   \end{split}
   \end{align}
   and for $N_f+1\leq i\leq N$, 
   \begin{align}
   \begin{split}\label{e4.26}
     s_{\lambda,i}(\blambda_h,\bmu_h) &= \langle \bsigma^*_{ph,i}(\blambda_h)\bn_p,\bmu_h^d\rangle_{\Gamma_\pp^i\cup\Gamma_\fp^i}
     -\langle \bu^*_{ph,i}(\blambda_h)\cdot\bn_p,\mu_h^p\rangle_{\Gamma_\pp^i\cup\Gamma_\fp^i}
     +\langle\bmu_h^d\cdot\bn_p,\lambda_h^p\rangle_{\Gamma_\fp^i}-\langle\blambda_h^d\cdot\bn_p,\mu_h^p\rangle_{\Gamma_\fp^i}.
   \end{split}
\end{align}

We begin with a bound for the Biot subproblems.
   
\begin{lemma}\label{lemma4.2}
There exist positive constants $C_{B,1},\ C_{B,2}$ independent of $h$ and $s_0$ such that for all $(\blambda_h^d,\lambda_h^p)\in\bLambda_h^d\times\Lambda_h^p$
\begin{align}\label{Biot-bound}
  \begin{split}      C_{B,1}\left(\|\blambda_h^d\|_{\Gamma_\fp\cup\Gamma_{pp}}^2+\|\lambda_h^p\|_{\Gamma_\fp\cup\Gamma_{pp}}^2\right)
    \leq \sum_{i = N_f+1}^{N}s_{\lambda,i}(\blambda_h,\blambda_h) \leq \frac{C_{B,2}}{h}\left(\|\blambda_h^d\|_{\Gamma_\fp\cup\Gamma_{pp}}^2+\|\lambda_h^p\|_{\Gamma_\fp\cup\Gamma_{pp}}^2\right),
\end{split}
\end{align}
where $C_{B,1},C_{B,2}=C(\sigma,\nu,d,\alpha,\bK,A)$.
\end{lemma}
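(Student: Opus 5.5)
The first step is an energy identity for the Biot part of $s_\lambda$. In \eqref{e4.6} take the test functions equal to the starred solution: $\btau_{ph,i}=\bsigma^*_{ph,i}(\blambda_h)$, $\bv_{ph,i}=\bu^*_{ph,i}(\blambda_h)$, $w_{ph,i}=p^*_{ph,i}(\blambda_h)$, and use \eqref{e4.6b}. Summing over $N_f+1\le i\le N$ and comparing with \eqref{e4.26} gives
\begin{align*}
\sum_{i=N_f+1}^N s_{\lambda,i}(\blambda_h,\blambda_h)=\sum_{i=N_f+1}^N\Big(\|A^{1/2}(\bsigma^*_{ph,i}+\alpha p^*_{ph,i}\bI)\|^2_{\Omega_i}+\|\bK^{-1/2}\bu^*_{ph,i}\|^2_{\Omega_i}+\|s_0^{1/2}p^*_{ph,i}\|^2_{\Omega_i}\Big).
\end{align*}
Here the Stokes-Biot cross terms $\langle\blambda_h^d\cdot\bn_p,\lambda_h^p\rangle-\langle\blambda_h^d\cdot\bn_p,\lambda_h^p\rangle$ cancel. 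I abbreviate the right-hand side as $\mathcal{E}_p$. It is nonnegative, and the whole task is to compare $\mathcal{E}_p$ with the $\rL^2$ norms of the Lagrange multipliers.

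For the upper bound I use a discrete extension in each subdomain. Choose $\btau^\lambda\in\mathbb{X}_{ph,i}$ with $\btau^\lambda\bn_p=\blambda_h^d$ on $\Gamma_\pp^i\cup\Gamma_\fp^i$ and $\btau^\lambda\bn_p=0$ on the rest of $\partial\Omega_i$. Build it from the canonical normal-trace degrees of freedom on boundary faces, with interior degrees of freedom set to zero, so that $\|\btau^\lambda\|_{\mathbb{H}(\mathrm{div};\Omega_i)}\le Ch^{-1/2}\|\blambda_h^d\|$ by scaling. Construct $\bv^\lambda$ with normal trace $\lambda_h^p$ in the same way. Testing \eqref{e4.6a} with $\btau^\lambda$ and \eqref{e4.6c} with $\bv^\lambda$ expresses $\langle\bsigma^*\bn,\blambda_h^d\rangle$ and $\langle\bu^*\cdot\bn,\lambda_h^p\rangle$ through volume terms. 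These are
\begin{align*}
a_{e,i}(\bsigma^*,p^*;\btau^\lambda,0),\quad b_{e,i}(\btau^\lambda;\disp^*,\bgamma^*),\quad a_{p,i}(\bu^*,\bv^\lambda),\quad (p^*,\nabla\cdot\bv^\lambda).
\end{align*}
The displacement and rotation $\disp^*,\bgamma^*$ are controlled by the inf-sup condition \eqref{e4.3}, using $\btau$ vanishing on the interface and \eqref{e4.6a}, in terms of $\|A^{1/2}(\bsigma^*+\alpha p^*\bI)\|$. The pressure $p^*$ is controlled uniformly in $s_0$ by the Darcy inf-sup \eqref{e4.2} together with \eqref{e4.6c}, giving $\|p^*\|\le C(\|\bK^{-1/2}\bu^*\|+h^{-1/2}\|\lambda_h^p\|)$. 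The trace term of this bound needs care: I will use test functions with zero normal trace where possible. Combining these with Cauchy-Schwarz yields $\mathcal{E}_p\le C\mathcal{E}_p^{1/2}h^{-1/2}(\|\blambda_h^d\|+\|\lambda_h^p\|)+\dots$, and hence the upper bound with $C_{B,2}/h$.

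For the lower bound I reverse the argument. From \eqref{e4.6c} with $\bv=\bv^\lambda$ chosen via a continuous-level extension (a lifting of $\lambda_h^p$ in $\rH^{-1/2}$ to $\bH(\mathrm{div})$, projected by the mixed interpolant), I get
\begin{align*}
\|\lambda_h^p\|^2\ \lesssim\ \|\bK^{-1/2}\bu^*\|\,\|\bv\|+\|p^*\|\,\|\nabla\cdot\bv\|.
\end{align*}
A better route is to pick $\bv$ with $\nabla\cdot\bv$ constant or zero, solving an auxiliary Neumann problem with data $\lambda_h^p$ and a stable $\rH^{1/2}\to\bH^1$ bound. Then $\|\bv\|\le C\|\lambda_h^p\|_{\rH^{-1/2+\epsilon}}\le C\|\lambda_h^p\|$. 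The pressure $p^*$ is in turn bounded by $\mathcal{E}_p$, using $\|p^*\|\lesssim\|A^{1/2}(\bsigma^*+\alpha p^*\bI)\|+\|\bsigma^*\|$ combined with elasticity duality, independently of $s_0$. The same procedure applies to $\blambda_h^d$ via \eqref{e4.6a}, with an $\mathbb{H}(\mathrm{div})$ lifting orthogonal to the rotations.

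The main obstacle is this lower bound. It requires $h$-independent $\rL^2$ liftings and $s_0$-independent control of $p^*$. On floating subdomains, those with no $\Gamma_i^{D_d}$ or $\Gamma_i^{D_p}$, the rigid motions and constants require an argument analogous to the neighbor-to-neighbor propagation in the proof of Theorem~\ref{theorem4.3}, with constants depending on $\sigma$ via \eqref{e4.14}.
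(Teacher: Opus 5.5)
Your energy identity, with the $\Gamma_\fp$ cross terms cancelling, is exactly the paper's first step; the paper then takes both bounds from the proof of (3.47) in \cite{ManuBiot}. After that identity there are two genuine gaps.

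\textbf{Upper bound.} Your derivation runs in the wrong direction. Testing \eqref{e4.6a} with an extension $\btau^\lambda$ of $\blambda_h^d$ does not represent $\langle\bsigma^*\bn_p,\blambda_h^d\rangle$. It gives
\[
\|\blambda_h^d\|^2_{\Gamma_\pp^i\cup\Gamma_\fp^i}=a_{e,i}(\bsigma^*,p^*;\btau^\lambda,0)+b_{e,i}(\btau^\lambda;\disp^*,\bgamma^*).
\]
With $\|\btau^\lambda\|_{\mathbb{H}(\mathrm{div};\Omega_i)}\lesssim h^{-1/2}\|\blambda_h^d\|$ this can only produce $h\|\blambda_h^d\|^2\lesssim\mathcal{E}_p+\dots$, which is a weak lower bound. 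It also brings in $\disp^*,\bgamma^*$. These are controlled by \eqref{e4.3} with $\btau\bn=\mathbf{0}$ on the interface only if $|\Gamma_i^{D_d}|>0$. If $|\Gamma_i^{D_d}|=0$, taking $\blambda_h^d$ to be the trace of a rigid motion gives $\bsigma^*=\mathbf{0}$ but $\disp^*\neq\mathbf{0}$.

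The upper bound needs none of this. Estimate $\langle\bsigma^*\bn_p,\blambda_h^d\rangle-\langle\bu^*\cdot\bn_p,\lambda_h^p\rangle$ directly, using the discrete trace inequality $\|\btau_h\bn\|_{\partial\Omega_i}\le Ch^{-1/2}\|\btau_h\|_{\Omega_i}$ for $\bsigma^*$ and $\bu^*$. Then use $\|\bsigma^*\|_{\Omega_i}\le C(\mathcal{E}_p^{1/2}+\|p^*\|_{\Omega_i})$ together with your (correct) estimate $\|p^*\|\le C(\|\bK^{-1/2}\bu^*\|+h^{-1/2}\|\lambda_h^p\|)$. Young's inequality then gives the $C_{B,2}/h$ bound, uniformly in $s_0$.

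\textbf{Lower bound.} The step ``$p^*$ is bounded by $\mathcal{E}_p$ independently of $s_0$'' cannot be proved subdomain by subdomain. So the term $(p^*,\nabla\cdot\bv)$ left by a lifting with constant nonzero divergence is not controlled. For a counterexample, take $\Omega_j$ with $|\Gamma_j^{D_p}|=|\Gamma_j^{D_d}|=0$, constant coefficients and the $\mathcal{BDM}_1$-based spaces. Set $\lambda_h^p\equiv c$ and $\blambda_h^d=\delta(A\bI)\mathbf{x}$ with $\delta=-s_0c/(\alpha\,\mathrm{tr}(A\bI))$. The local solution is $\bu^*=\mathbf{0}$, $p^*=c$, $\bsigma^*=(\delta-\alpha c)\bI$, $\bgamma^*=\mathbf{0}$. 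Its energy is $O(s_0c^2)$, while $\|p^*\|^2_{\Omega_j}=c^2|\Omega_j|$ and $\|\lambda_h^p\|^2=c^2|\partial\Omega_j|$.

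The missing idea is to choose liftings for which $p^*$, $\disp^*$, $\bgamma^*$ drop out entirely:
\begin{itemize}
\item $\bv\in\bV_{ph,i}$ with $\nabla\cdot\bv=0$;
\item $\btau\in\mathbb{X}_{ph,i}$ with $\nabla\cdot\btau=\mathbf{0}$ and $(\btau,\bchi)=0$ for all $\bchi\in\mathbb{Q}_{ph,i}$. When $|\Gamma_i^{D_d}|>0$ any $h$-stable lifting suffices, since \eqref{e4.3} then controls $\disp^*,\bgamma^*$.
\end{itemize}
Both liftings need norms bounded by the $\rL^2$ norm of the data, independently of $h$. Then only $a_{p,i}(\bu^*,\cdot)$ and $a_{e,i}(\bsigma^*,p^*;\cdot,0)$ remain, and these are bounded by the energy with no $s_0$ dependence.

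Such liftings exist only under compatibility conditions: zero net flux, and zero net force and torque. These must be absorbed on $\Gamma_i^{D_p}$ or $\Gamma_i^{D_d}$, or on an interface $\Gamma_{i,j}$ where $\blambda_h$ is already bounded by the energy of a neighbouring subdomain. This quantitative version of the propagation in Theorem~\ref{theorem4.3} is exactly the part your proposal leaves open.
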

\begin{proof}
    Take $\btau_{ph,i}=\bsigma^*(\blambda_h)_{ph,i},\ \bxi_{ph,i}=\disp^*(\blambda_h)_{ph,i},\ \bchi_{ph,i}=\gamma^*(\blambda_h)_{ph,i}, \bv_{ph,i}=\bu^*(\blambda_h)_{ph,i}$, and $\ w_{ph,i}=p^*(\blambda_h)_{ph,i}$ in \eqref{e4.6}. Adding \eqref{e4.6a}, \eqref{e4.6c}, and \eqref{e4.6d}, and using \eqref{e4.6b} and \eqref{e4.26}, we obtain, for $N_f+1\leq i\leq N$,
    \begin{align}
    \begin{split}
        s_{\lambda,i}(\blambda_h,\blambda_h)&=a_{e,i}(\bsigma_{ph,i}^*(\blambda_h), p_{ph,i}^*(\blambda_h);\bsigma_{ph,i}^*(\blambda_h), p_{ph,i}^*(\blambda_h))+a_{p,i}(\bu_{ph,i}^*(\blambda_h),\bu_{ph,i}^*(\blambda_h))\\
        &\hspace{2cm}+a_{p,i}^p(p_{ph,i}^*(\blambda_h),p_{ph,i}^*(\blambda_h))\\
        &= \|A^{1/2}(\bsigma_{ph,i}^*(\blambda_h)+\alpha p_{ph,i}^*(\blambda_h)\bI)\|_{\Omega_i}^2+\|\bK^{-1/2}\bu_{ph,i}^*(\blambda_h)\|_{\Omega_i}^2+\|s_0^{1/2}p_{ph,i}^*(\blambda_h)\|_{\Omega_i}^2.
        \end{split}
    \end{align}
Using the above, the proof follows as the proof of (3.47) in \cite[Theorem 3.1]{ManuBiot}.
\end{proof}

For the bound for the Stokes subproblems, we need the following auxiliary result.

\begin{lemma}\label{lemma4.3}
Under the assumptions of Lemma \ref{lemma4.1}, the bilinear form $a_{f,i}(\bu_\fhi,\bv_\fhi)$ is an inner product in $\bV_{fh,i}$, inducing the norm 
\begin{align*}
\|\bv_\fhi\|^2_{a_{f,i}} = a_{f,i}(\bv_\fhi,\bv_\fhi)\quad \forall\ \bv_\fhi\in\bV_\fhi.
\end{align*}
Moreover, there exist positive constants $C_1$ and $C_2$ independent of $h$ such that
\begin{align}\label{norm-equiv}
C_1\|\bv_\fhi\|_{1,\Omega_i}^2\leq \|\bv_\fhi\|^2_{a_{f,i}}\leq C_2\left(\|\bv_\fhi\|^2_{\Omega_i} +
\frac{1}{h}\|\bv_\fhi\|^2_{\Gamma_i^D}\right).
\end{align}
\end{lemma}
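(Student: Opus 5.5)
The plan is to verify the inner-product axioms directly from the definition \eqref{e3.9a}, and then obtain the two inequalities in \eqref{norm-equiv} separately: the lower one from Lemma~\ref{lemma4.1}, the upper one from Cauchy--Schwarz, the inverse estimate \eqref{e3.12} and Young's inequality.

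\emph{Inner-product property.} Restricted to $\Omega_i$, the form $a_{f,i}(\cdot,\cdot)$ is bilinear. It is also symmetric, since the two Nitsche consistency terms $-2\mu\langle \bD(\bu_\fhi)\bn_f,\bv_\fhi\rangle_{\Gamma_i^D}$ and $-2\mu\langle \bu_\fhi,\bD(\bv_\fhi)\bn_f\rangle_{\Gamma_i^D}$ are mirror images of each other, and the remaining terms are symmetric. Lemma~\ref{lemma4.1} then gives $a_{f,i}(\bv_\fhi,\bv_\fhi)\ge C\|\bv_\fhi\|_{1,\Omega_i}^2$, which is positive definiteness. So $a_{f,i}$ is an inner product and $\|\cdot\|_{a_{f,i}}$ is a norm. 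The same inequality is exactly the lower bound in \eqref{norm-equiv}, with $C_1=C(\sigma,d,\mu)$ from Lemma~\ref{lemma4.1}, using $\gamma>2\mu C_I^2$.

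\emph{Upper bound.} I would bound each term of $a_{f,i}(\bv_\fhi,\bv_\fhi)$. The volume term gives $2\mu\|\bD(\bv_\fhi)\|_{\Omega_i}^2$ and the penalty term gives $\frac{\gamma}{h}\|\bv_\fhi\|_{\Gamma_i^D}^2$. For the cross term, apply Cauchy--Schwarz on each boundary face $e\subset\partial E$, $e\subset\Gamma_i^D$. Then use the inverse estimate \eqref{e3.12} on the polynomial $\bD(\bv_\fhi)|_E$ and sum over faces. This yields
\begin{align*}
4\mu|\langle \bD(\bv_\fhi)\bn_f,\bv_\fhi\rangle_{\Gamma_i^D}| \le 4\mu C_I^{1/2}\|\bD(\bv_\fhi)\|_{\Omega_i}\, h^{-1/2}\|\bv_\fhi\|_{\Gamma_i^D} \le 2\mu\|\bD(\bv_\fhi)\|_{\Omega_i}^2 + 2\mu C_I h^{-1}\|\bv_\fhi\|^2_{\Gamma_i^D}.
\end{align*}
Adding the three contributions gives
\begin{align*}
\|\bv_\fhi\|_{a_{f,i}}^2\le C\big(\|\bD(\bv_\fhi)\|_{\Omega_i}^2+h^{-1}\|\bv_\fhi\|_{\Gamma_i^D}^2\big),
\end{align*}
with $C$ depending only on $\mu$, $\gamma$ and $C_I$. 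Finally, $\|\bD(\bv_\fhi)\|_{\Omega_i}\le\|\nabla\bv_\fhi\|_{\Omega_i}$.

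\emph{Main obstacle.} The difficulty is matching the volume term on the right of \eqref{norm-equiv}. The natural $h$-independent quantity that appears is $\|\bv_\fhi\|_{1,\Omega_i}^2$, not the bare $\rL^2$ norm. I would therefore read $\|\bv_\fhi\|_{\Omega_i}$ in the upper bound as the $\rH^1$ (semi)norm, i.e.\ the scaled norm $\|\bv_\fhi\|_{1,\Omega_i}^2+h^{-1}\|\bv_\fhi\|_{\Gamma_i^D}^2$ used later for the Stokes extensions. A bound by the literal $\rL^2$ norm would require the global inverse inequality $\|\nabla\bv_\fhi\|_{\Omega_i}\le Ch^{-1}\|\bv_\fhi\|_{\Omega_i}$, which introduces an $h^{-2}$ factor and so cannot be $h$-independent. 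The care needed is only in applying \eqref{e3.12} elementwise, which requires quasiuniformity of the trace mesh on $\Gamma_i^D$.
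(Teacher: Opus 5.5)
Your proof is correct and follows the paper's argument: symmetry together with Lemma~\ref{lemma4.1} gives the inner-product property and the lower bound, and Cauchy--Schwarz, the inverse estimate \eqref{e3.12}, and Young's inequality give the upper bound. Your reading of the upper bound is also right: the paper's own proof ends with $2\mu(1+C_I)\|\bv_\fhi\|_{1,\Omega_i}^2+\frac{1}{h}(\gamma+2\mu)\|\bv_\fhi\|_{\Gamma_i^D}^2$, so $\|\bv_\fhi\|_{\Omega_i}^2$ in the statement is a typo for the $\rH^1$ norm (the form later used in Lemma~\ref{lemma4.4}), and the literal $\rL^2$ version does fail for mesh-scale oscillating functions that vanish on $\Gamma_i^D$.
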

\begin{proof}
Since $a_{f,i}(\bu_\fhi,\bv_\fhi)$ is linear and symmetric, the lower bound in \eqref{norm-equiv}, which has been shown in Lemma \ref{lemma4.1}, implies that it is an inner product in $\bV_{fh,i}$ inducing the norm $\|\bv_\fhi\|_{a_{f,i}}$. For the upper bound in \eqref{norm-equiv}, using the definition of $a_{f,i}$, cf. \eqref{e3.9a}, we obtain
    \begin{align*}
        a_{f,i}(\bu_\fhi,\bu_\fhi) &\leq 2\mu\|\bD(\bu_\fhi)\|^2_{\Omega_i} +4\mu\| \bD(\bu_\fhi)\cdot \bn_f\|_{\Gamma_i^D}\| \bu_\fhi\|_{\Gamma_i^D}\nonumber+ \frac{\gamma}{h}\|\bu_\fhi\|^2_{\Gamma_i^D}\\
    &\leq 2\mu\left(1+C_I\right)\| \bu_\fhi\|^2_{1,\Omega_i} + \frac{1}{h}\left(\gamma+2\mu\right)\|\bu_\fhi\|^2_{\Gamma_i^D},
    \end{align*}
    where we used the trace inequality \eqref{e3.12} and Young's inequality.
\end{proof}
\begin{lemma}\label{lemma4.4}
    There exist positive constants $C_{S,1},\ C_{S,2}$ independent of $h$ and $s_0$ such that for all $\blambda_h\in\bLambda_h$
    \begin{align}
        C_{S,1}h\|\blambda_h^f\|_{\Gamma_\ff}^2&\leq \sum_{i = 1}^{N_f}s_{\lambda,i}(\blambda_h,\blambda_h)+\sum_{i = N_f+1}^{N}s_{\lambda,i}(\blambda_h,\blambda_h),\label{e4.29}\\
        \sum_{i = 1}^{N_f}s_{\lambda,i}(\blambda_h,\blambda_h)&\leq C_{S,2}\left(\|\blambda_h^f\|_{\Gamma_\ff}^2+\|\blambda_h^d\|_{\Gamma_\fp}^2+\|\lambda_h^p\|_{\Gamma_\fp}^2\right).\label{e4.30}
    \end{align}
    where $C_{S,1},C_{S,2}=C(\sigma,\nu,d,\mu,\alpha,\alpha_{\bjs},\bK,A,D_i)$.
\end{lemma}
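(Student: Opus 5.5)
First, I will derive an energy identity for the Stokes contributions. Testing \eqref{e4.5a} with $\bv_\fhi=\bu^*_\fhi(\blambda_h)$ and using \eqref{e4.5b}, the pressure term drops out. Comparing with \eqref{e4.25}, this gives
\begin{align*}
\sum_{i=1}^{N_f}s_{\lambda,i}(\blambda_h,\blambda_h)=\sum_{i=1}^{N_f}\Big(\|\bu^*_\fhi(\blambda_h)\|^2_{a_{f,i}}+\sum_{l=1}^{d-1}\Big\<\mu\alpha_{\bjs}\sqrt{\bK_l^{-1}}(\bu^*_\fhi-\blambda_h^d)\cdot\bt_{f,l},(\bu^*_\fhi-\blambda^d_h)\cdot\bt_{f,l}\Big\>_{\Gamma_\fp^i}\Big)-\sum_{l}\Big\<\mu\alpha_{\bjs}\sqrt{\bK_l^{-1}}(\bu^*_\fhi-\blambda_h^d)\cdot\bt_{f,l},\blambda_h^d\cdot\bt_{f,l}\Big\>,
\end{align*}
up to regrouping of the BJS terms. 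Together with the cross terms $\pm\<\blambda^d_h\cdot\bn_p,\lambda^p_h\>$ in \eqref{e4.26}, which cancel, this reproduces \eqref{e4.23}. In all cases the sum is controlled by $\|\bu^*_\fh\|_{a_f}^2$ plus BJS terms.

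\textbf{Upper bound \eqref{e4.30}.} From \eqref{e4.5a} tested with $\bu^*_\fhi$, the right-hand side is bounded by $(\|\blambda^f_h\|_{\Gamma_\ff^i}+\|\lambda^p_h\|_{\Gamma_\fp^i}+C\|\blambda^d_h\|_{\Gamma_\fp^i})\|\bu^*_\fhi\|_{\partial\Omega_i}$. I then apply the continuous trace inequality $\|\bv\|_{\partial\Omega_i}\le C\|\bv\|_{1,\Omega_i}$ and the lower bound in \eqref{norm-equiv}, so that $\|\bu^*_\fhi\|_{\partial\Omega_i}\le C\|\bu^*_\fhi\|_{a_{f,i}}$. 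With $a_{\bjs,i}(\bu^*,\mathbf0;\bu^*,\mathbf0)\ge0$, this yields $\|\bu^*_\fhi\|_{a_{f,i}}+\|\bu^*_\fhi\cdot\bt\|_{\Gamma_\fp^i}\le C(\ldots)$. Finally, I bound each term of \eqref{e4.25} by Cauchy–Schwarz and the same trace estimate, which gives \eqref{e4.30} with no $h$-dependence.

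\textbf{Lower bound \eqref{e4.29}.} By \eqref{e4.23}, the total sum is at least $\sum_i\|\bu^*_\fhi\|^2_{a_{f,i}}$ plus the Biot terms, and by Lemma~\ref{lemma4.2} it is also at least $C_{B,1}(\|\blambda^d_h\|^2+\|\lambda^p_h\|^2)$. To recover $\blambda^f_h$, I will construct a discrete extension $\bv_\fhi\in\bV_\fhi$ with $\bv_\fhi=\blambda^f_h$ on $\Gamma^i_\ff$, $\bv_\fhi=\mathbf0$ on $\Gamma_i^D$ and on $\Gamma_\fp^i$. I take the discrete harmonic-type extension of the nodal values on $\Gamma_\ff^i$, zeroing all other boundary nodes, so that $\|\bv_\fhi\|_{1,\Omega_i}\le Ch^{-1/2}\|\blambda^f_h\|_{\Gamma_\ff^i}$ (inverse inequality plus the $H^{1/2}$ extension). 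Vanishing on $\Gamma_i^D$ is essential: the Nitsche term $h^{-1}\|\bv\|^2_{\Gamma_i^D}$ in the upper bound of \eqref{norm-equiv} then does not appear, and $\|\bv_\fhi\|_{a_{f,i}}\le C\|\bv_\fhi\|_{1,\Omega_i}$. Testing \eqref{e4.5a} with this $\bv_\fhi$ gives
\[
\|\blambda^f_h\|^2_{\Gamma^i_\ff}=-a_{f,i}(\bu^*_\fhi,\bv_\fhi)-b_{f,i}(\bv_\fhi,p^*_\fhi),
\]
since the BJS and $\Gamma_\fp$ terms vanish.

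The pressure $p^*_\fhi$ must then be controlled. I use the inf-sup condition \eqref{e4.1} with $\gamma_i=\Gamma_i^D$, and \eqref{e4.5a} again, to get $\|p^*_\fhi\|\le C(\|\bu^*_\fhi\|_{a_{f,i}}+\|\blambda^f_h\|_{\Gamma_\ff^i}+\|\lambda^p_h\|+\|\blambda^d_h\|+\|\bu^*_\fhi\|_{\Gamma_\fp^i})$. This is where I expect the main obstacle. The $\blambda^f_h$ term on the right would spoil the estimate, so I will instead choose the inf-sup test functions in $\bV^{0,\gamma_i}_\fhi$ with $\gamma_i=\partial\Omega_i\setminus\Gamma^i_\ff$ removed appropriately, i.e.\ vanishing on $\Gamma_\ff^i$ and $\Gamma_i^D$. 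This is possible as long as $\gamma_i\neq\partial\Omega_i$, which holds when $|\Gamma_\fp^i\cup\Gamma_i^N|>0$ or via a weaker mean-value argument. Combining, I obtain
\[
\|\blambda^f_h\|^2\le Ch^{-1/2}\|\blambda^f_h\|\bigl(\|\bu^*_\fh\|_{a_f}+\|\blambda^d_h\|+\|\lambda^p_h\|\bigr).
\]
Squaring, and using the lower bounds from \eqref{e4.23} and Lemma~\ref{lemma4.2}, gives $h\|\blambda^f_h\|^2_{\Gamma_\ff}\le C\sum_{i=1}^N s_{\lambda,i}(\blambda_h,\blambda_h)$.
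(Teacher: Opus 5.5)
Your upper bound \eqref{e4.30} is fine and essentially matches the paper's argument. The gap is in the lower bound, at the step you flagged yourself.

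Your pressure estimate uses \eqref{e4.1} with $\gamma_i=\Gamma_\ff^i\cup\Gamma_i^D$, which requires $|\Gamma_\fp^i\cup\Gamma_i^N|>0$. Nothing in the lemma excludes a Stokes subdomain with $\partial\Omega_i=\Gamma_\ff^i\cup\Gamma_i^D$, e.g.\ any Stokes subdomain not touching $\Gamma_\fp$ when $\Gamma_f^N=\emptyset$.

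For such an $\Omega_i$ no local ``weaker mean-value argument'' exists:
\begin{itemize}
\item The Nitsche term does not help: $b_{f,i}(\bv,1)=-\int_{\partial\Omega_i\setminus\Gamma_i^D}\bv\cdot\bn_i=0$ for every $\bv$ vanishing on $\Gamma_\ff^i$.
\item Take $\chi\blambda_h^f=c\,\bn_i$ on $\Gamma_\ff^i$, a constant normal stress that is admissible on flat $\Gamma_{i,j}$. Then $(\bu^*_\fhi,p^*_\fhi)=(\mathbf{0},c)$ solves \eqref{e4.5}. Hence $s_{\lambda,i}(\blambda_h,\blambda_h)=0$ while $\|\blambda_h^f\|_{\Gamma_\ff^i}\neq0$.
\end{itemize}
So $\blambda^f_h$ on such a subdomain can only be controlled through its neighbors.

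The paper supplies exactly this missing step by propagation. It first treats a subdomain with $|\Gamma_\fp^i|>0$, using $\gamma_i=\partial\Omega_i\setminus\Gamma_\fp$ (your device). For an adjacent Stokes subdomain $\Omega_j$ it then uses $\gamma_j=\partial\Omega_j\setminus\Gamma_{i,j}$, so only $\blambda^f_h|_{\Gamma_{i,j}}$, already controlled, enters the pressure bound. It then iterates across $\Gamma_\ff$.

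If you add this step, bound that term through the equation on $\Omega_i$. Equivalently, test with velocities continuous across $\Gamma_{i,j}$, for which the two $\blambda^f_h$ contributions cancel. Do not bound it by $\|\blambda^f_h\|_{\Gamma_{i,j}}\le Ch^{-1/2}(\cdots)$: paired with your $O(h^{-1/2}\|\blambda^f_h\|)$ extension, that costs an extra factor $h^{-1/2}$ per step.

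\textbf{Two smaller points.}

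First, your extension cannot equal $\blambda^f_h$ on $\Gamma_\ff^i$ and also vanish on $\Gamma_i^D\cup\Gamma_\fp^i$. With Nitsche, $\bLambda^f_h$ is the full trace space (Remark~\ref{r3}), so $\blambda^f_h$ need not vanish where $\overline{\Gamma_\ff^i}$ meets $\overline{\Gamma_i^D}$ or $\overline{\Gamma_\fp^i}$. This is harmless. Keep those junction nodal values, so $\bv_\fhi$ is nonzero only on one layer of boundary elements of $\Gamma_i^D\cup\Gamma_\fp^i$. Inverse estimates then give $\|\bv_\fhi\|_{\Gamma_i^D\cup\Gamma_\fp^i}\le C\|\blambda^f_h\|_{\Gamma_\ff^i}$. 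The extra Nitsche, $\langle\bv_\fhi\cdot\bn_f,p^*_\fhi\rangle_{\Gamma_i^D}$, BJS and $\lambda^p_h$ terms are therefore of the same order as your main terms. Vanishing on $\Gamma_i^D$ is neither achievable nor essential.

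Second, your displayed Stokes identity carries a spurious extra BJS term. The exact identity is $s_{\lambda,i}(\blambda_h,\blambda_h)=\|\bu^*_\fhi\|^2_{a_{f,i}}+a_{\bjs,i}(\bu^*_\fhi,\blambda^d_h;\bu^*_\fhi,\blambda^d_h)$, which is what you actually use.
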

\begin{proof}
We take $\bv_\fhi = \bu^*_\fhi(\blambda_h),\ w_\fhi = p_\fhi^*(\blambda_h)$ in the problems \eqref{e4.5} and, for $\Omega_i,\ 1\leq i\leq N_f$ we obtain
\begin{align*}
    s_{\lambda,i}(\blambda_h,\blambda_h) = a_{f,i}(\bu_\fhi^*(\blambda_h),\bu_\fhi^*(\blambda_h)) +a_{\bjs,i}(\bu_\fhi^*(\blambda_h),\blambda^d_h; \bu_\fhi^*(\blambda_h),\blambda^d_h),
\end{align*}
which implies 
\begin{align}\label{e4.31}
    \|\bu_\fhi^*(\blambda_h)\|^2_{a_{f,i}}\leq Cs_{\lambda,i}(\blambda_h,\blambda_h).
\end{align}

We begin by establishing the upper bound for $s_{\lambda,i}$. Using \eqref{e4.25}, we obtain
\begin{align*}
  s_{\lambda,i}(\blambda_h,\blambda_h) &\leq \|\bu_\fhi^*(\blambda_h)\|_{\Gamma_\ff^i}\|\blambda_h^f\|_{\Gamma_\ff^i}
  +\|\bu_\fhi^*(\blambda_h)\cdot\bn_f\|_{\Gamma_\fp^i}\|\lambda_h^p\|_{\Gamma_\fp^i}\\
&\qquad
+\mu\alpha_{\bjs}K_{\min}^{-1/2}\sum_{l=1}^{d-1}\left(\|\bu_\fhi^*(\blambda_h)\cdot\bt_{f,l}\|_{\Gamma_\fp^i}\|\blambda_h^d\cdot\bt_{f,l}\|_{\Gamma_\fp^i} + \|\blambda_h^d\cdot\bt_{f,l}\|_{\Gamma_\fp^i}^2 \right)\\
        &\leq C\|\bu_\fhi^*(\blambda_h)\|_{1,\Omega_i}\left(\|\blambda_h^f\|_{\Gamma_\ff^i}+\|\blambda_h^d\|_{\Gamma_\fp^i}+\|\lambda_h^p\|_{\Gamma_\fp^i} \right) + C \|\blambda_h^d\|_{\Gamma_\fp^i}^2.
\end{align*}
In the last inequality we have used the continuous trace inequality on $\Omega_i$; consequently the constant depends on $\text{diam}(\Omega_i) = D_i$. Using the coercivity of $a_f$ (see Lemma \ref{lemma4.1}), \eqref{e4.31}, and the above estimate, we obtain 
\begin{align*}
         s_{\lambda,i}(\blambda_h,\blambda_h)&\leq Cs^{1/2}_{\lambda,i}(\blambda_h,\blambda_h)\left(\|\blambda_h^f\|_{\Gamma_\ff^i}+\|\blambda_h^d\|_{\Gamma_\fp^i}+\|\lambda_h^p\|_{\Gamma_\fp^i}\right) + C \|\blambda_h^d\|_{\Gamma_\fp^i}^2,
\end{align*}
from which the bound \eqref{e4.30} follows, using Young's inequality for the first term on the right hand side. 
    
    Next, we establish the lower bound. Consider the subdomain $\Omega_i\subset\Omega_f$ such that $|\Gamma_\fp^i|>0$. Then, the inf-sup condition \eqref{e4.1} with $\gamma_i = \partial\Omega_i\setminus\Gamma_\fp$ and \eqref{e4.5a} give
    \begin{align}
    \begin{split}
      \|p_\fhi^*(\blambda_h)\|_{\Omega_i} &\leq C\sup_{\bv_\fhi\in\bV_\fhi^{0,\gamma_i}\setminus\{\mathbf{0}\}}\frac{1}{\|\bv_\fhi\|_{1,\Omega_i}}\Big(a_{f,i}(\bu_\fhi^*(\blambda_h),\bv_\fhi)+a_{\bjs,i}(\bu_\fhi^*(\blambda_h),\blambda_h^d;\bv_\fhi,\mathbf{0})
      \nonumber\\
        &\hspace{4cm} +\langle \lambda^p_h,\bv_\fhi\cdot\bn_f\rangle_{\Gamma_\fp^i}\Big)\nonumber
        \end{split}\\
          &\leq C\sup_{\bv_\fhi\in\bV_\fhi^{0,\gamma_i}\setminus\{\mathbf{0}\}}\frac{1}{\|\bv_\fhi\|_{1,\Omega_i}}
\Big(\|\bu_\fhi^*(\blambda_h)\|_{1,\Omega_i}\|\bv_\fhi\|_{1,\Omega_i} \nonumber \\
& \hspace{4cm}   +\big(\|\bu_\fhi^*(\blambda_h)\|_{\Gamma_\fp^i}
          +\|\blambda_h^d\|_{\Gamma_\fp^i}
          +\|\lambda_h^p\|_{\Gamma_\fp^i}\big)\|\bv_\fhi\|_{\Gamma_\fp^i}\Big)\nonumber\\
          &\leq C\left(\|\bu_\fhi^*(\blambda_h)\|_{1,\Omega_i}+\|\blambda_h^d\|_{\Gamma_\fp^i}+\|\lambda_h^p\|_{\Gamma_\fp^i}\right) \nonumber \\
          & \leq C\left(\|\bu_\fhi^*(\blambda_h)\|_{a_{f,i}}+\|\blambda_h^d\|_{\Gamma_\fp^i}+\|\lambda_h^p\|_{\Gamma_\fp^i}\right),\label{e4.32}
    \end{align}
where in the second inequality, we used that
\begin{align*}
    a_{f,i}(\bu_\fhi^*(\blambda_h),\bv_\fhi) = 2\mu(\bD(\bu_\fhi^*(\blambda_h)),\bD(\bv_\fhi))_{\Omega_i} \quad \text{for }\bv_\fhi\in\bV_\fhi^{0,\gamma_i},
\end{align*}
and in the last inequality we used \eqref{norm-equiv}.

We now consider an auxiliary problem on this Stokes subdomain $\Omega_i$.
We extend $\blambda_h^f$ from $\Gamma_\ff^i$ to $\partial\Omega_i$ such that
$ E_i\blambda_h^f\in \bH^{1/2}(\partial\Omega_i),$ $E_i\blambda_h^f=\blambda_h^f$ on $\Gamma_\ff^i$,
\begin{align}\label{e4.33}
    &\|E_i\blambda_h^f\|_{1/2,\partial\Omega_i}\leq C\|\blambda_h^f\|_{1/2,\Gamma_\ff^i},\quad \|E_i\blambda_h^f\|_{\partial\Omega_i}\leq C\|\blambda_h^f\|_{\Gamma_\ff^i},\ \text{and}\quad \int_{\partial\Omega_i}E_i\blambda_h^f = \mathbf{0}.
\end{align}
We refer the reader to \cite{Stein1970} for the existence of such extension. 

Let $P_\Lambda$ be the $L^2$-projection onto $\bV_{h,i}|_{\partial \Omega_i}$. It satisfies \cite{StokesDarcyMortar}
\begin{align}\label{e4.34}
    \|P_\Lambda \bvarphi\|_{\partial\Omega_i}\leq C\|\bvarphi\|_{\partial\Omega_i},\quad \|P_\Lambda\bvarphi\|_{1/2,\partial\Omega_i}\leq C\|\bvarphi\|_{1/2,\partial\Omega_i},\ \text{and }\int_{\partial\Omega_i}P_\Lambda E_i\blambda_h^f =\int_{\partial\Omega_i}E_i\blambda_h^f = \mathbf{0}.
\end{align}
Let $\bpsi_i \in \bH^1(\Omega_i)$ solve the auxiliary problem
\begin{align*}
    \nabla\cdot\bpsi_i &= 0\quad \text{in }\Omega_i,\\
    \bpsi_i &= P_\Lambda E_i\blambda_h^f\quad \text{on }\partial\Omega_i,
\end{align*}
which is well posed, due to \eqref{e4.34}. It holds that \cite{Galdi}
\begin{align}\label{e4.35}
    \|\bpsi_i\|_{1,\Omega_i}\leq C\|P_{\Lambda}E_i\blambda_h^f\|_{1/2,\partial\Omega_i}\leq C\|E_i\blambda_h^f\|_{1/2,\partial\Omega_i}\leq C\|\blambda_h^f\|_{1/2,\Gamma_\ff^i}.
\end{align}
Let $\bpsi_{h,i}\in\bV_{fh,i}$ be the Stokes finite element projection of $\bpsi_i$. It  satisfies
\begin{align}\label{e4.36}
    \bpsi_{h,i}|_{\Gamma_\ff^i} = \bpsi_i|_{\Gamma_\ff^i} = \blambda_h^f|_{\Gamma_\ff^i}\quad \text{and}\quad\|\bpsi_{h,i}\|_{1,\Omega_i}\leq C\|\bpsi_i\|_{1,\Omega_i}.
\end{align}
Now take $\bv_\fhi = -\chi\bpsi_{h,i}$ in \eqref{e4.5a}. Using \eqref{e4.32}, Lemma \ref{lemma4.3}, and the inverse inequality (see \cite{ciarlet}), it follows that
\begin{align*}
  \|\blambda_h^f\|_{\Gamma_\ff^i}^2 &= \langle\blambda_h^f,\bpsi_{h,i}\rangle_{\Gamma_\ff^i} \\
  & = a_{f,i}(\bu_\fhi^*(\blambda_h),\bpsi_{h,i})+a_{\bjs,i}(\bu_\fhi^*(\blambda_h)-\blambda_h^d,\bpsi_{h,i})
  +b_{f,i}(\bpsi_{h,i},p_\fhi^*(\blambda_h))
  +\langle \lambda^p_h,\bpsi_{h,i}\cdot\bn_f\rangle_{\Gamma_\fp^i}
  \\
    &\leq C\Big(\|\bu_\fhi^*(\blambda_h)\|_{a_{f,i}}\|\bpsi_{h,i}\|_{a_{f,i}}+\big(\|\bu_\fhi^*(\blambda_h)\|_{a_{f,i}}+\|\blambda_h^d\|_{\Gamma_\fp^i}+\|\lambda_h^p\|_{\Gamma_\fp^i}\big)\|\bpsi_{h,i}\|_{\Gamma_\fp^i}\\
    &\qquad+\|p_\fhi^*(\blambda_h)\|_{\Gamma_i^D}\|\bpsi_{h,i}\|_{\Gamma_i^D}\Big)\\
  &\leq C\Big(\|\bu_\fhi^*(\blambda_h)\|_{a_{f,i}}\big(\|\bpsi_{h,i}\|_{1,\Omega_i}+h^{-1/2}\|\bpsi_{h,i}\|_{\Gamma_i^D}\big)\\
  &\qquad
  +\big(\|\bu_\fhi^*(\blambda_h)\|_{\Gamma_\fp^i}+\|\blambda_h^d\|_{\Gamma_\fp^i}
    +\|\lambda_h^p\|_{\Gamma_\fp^i}\big)\|\bpsi_{h,i}\|_{\Gamma_\fp^i}
    +\|p_\fhi^*(\blambda_h^f)\|_{\Omega_i}h^{-1/2}\|\bpsi_{h,i}\|_{\Gamma_i^D}\Big)\\
    &\leq C\Big(\|\bu_\fhi^*(\blambda_h)\|_{a_{f,i}}\big(\|\bpsi_{h,i}\|_{1,\Omega_i}+h^{-1/2}\|\bpsi_{h,i}\|_{\Gamma_i^D}\big)\\
    &\qquad
    +\big(\|\bu_\fhi^*(\blambda_h)\|_{\Gamma_\fp^i}+\|\blambda_h^d\|_{\Gamma_\fp^i}
    +\|\lambda_h^p\|_{\Gamma_\fp^i}\big)\|\bpsi_{h,i}\|_{\Gamma_\fp^i}
    \\
    &\qquad
    +\big(\|\bu_\fhi^*(\blambda_h)\|_{a_{f,i}} +\|\blambda_h^d\|_{\Gamma_\fp^i}+\|\lambda_h^p\|_{\Gamma_\fp^i}\big)h^{-1/2}\|\bpsi_{h,i}\|_{\Gamma_i^D}\Big)\\
    &\leq C\big(\|\bu_\fhi^*(\blambda_h)\|_{\Gamma_\fp^i}+\|\blambda_h^d\|_{\Gamma_\fp^i}+\|\lambda_h^p\|_{\Gamma_\fp^i}\big)\big(\|\bpsi_{h,i}\|_{1,\Omega_i}+h^{-1/2}\|\bpsi_{h,i}\|_{\Gamma_i^D}\big)\\
    &\leq C\big(\|\bu_\fhi^*(\blambda_h)\|_{\Gamma_\fp^i}+\|\blambda_h^d\|_{\Gamma_\fp^i}+\|\lambda_h^p\|_{\Gamma_\fp^i}\big)\big(\|\bpsi_i\|_{1,\Omega_i}+h^{-1/2}\|P_\Lambda E_i\blambda_h^f\|_{\Gamma_i^D}\big),
\end{align*}
where we used the inverse inequality $\|p_\fhi^*(\blambda_h^f)\|_{\Gamma_i^D}\leq C h^{-1/2}\|p_\fhi^*(\blambda_h^f)\|_{\Omega_i}$, the continuous trace inequality \\$\|\bpsi_{h,i}\|_{\Gamma_\fp^i}\leq C\|\bpsi_{h,i}\|_{1,\Omega_i}$, and the estimate \eqref{e4.36}. Now, using \eqref{e4.35}, \eqref{e4.34}, \eqref{e4.33}, \eqref{e4.31}, and \eqref{Biot-bound} in the above estimate, we obtain
\begin{align}
    \|\blambda_h^f\|_{\Gamma_\ff^i}^2 &\leq C\big(\|\bu_\fhi^*(\blambda_h)\|_{\Gamma_\fp^i}+\|\blambda_h^d\|_{\Gamma_\fp^i}+\|\lambda_h^p\|_{\Gamma_\fp^i}\big)\big(\|\blambda_{h}^f\|_{1/2,\Gamma_\ff^i}+h^{-1/2}\|\blambda_h^f\|_{\Gamma_\ff^i}\big) \nonumber \\
    &\leq C h^{-1/2}\Big(s_{\lambda,i}^{1/2}(\blambda_h,\blambda_h)+\sum_{i = N_f+1}^N s_{\lambda,i}^{1/2}(\blambda_h,\blambda_h)\Big)\|\blambda_h^f\|_{\Gamma_\ff^i}. \label{interm-bound}
\end{align}
We next consider a subdomain $\Omega_j$ that has an interface with $\Omega_i$. We can use the inf-sup condition \eqref{e4.1} with $\gamma_i = \partial\Omega_i\setminus\Gamma_{i,j}$ to obtain a bound like \eqref{e4.32}. Continuing in a similar way as above, we obtain an extension of the bound \eqref{interm-bound} that includes $\|\blambda_h^f\|_{\Gamma_\ff^j}$. Proceeding iteratively, we obtain the upper bound \eqref{e4.30}.
\end{proof}

We are now ready to establish the main result. Let $\Gamma$ denote the union of all interfaces, $\Gamma = \Gamma_\fp\cup\Gamma_\ff\cup\Gamma_\pp$.

\begin{theorem}\label{theorem4.4}
    If there are no Stokes-Stokes interfaces, then
    \begin{align}
        C_{B,1}\|\blambda_h\|^2_{\Gamma} \leq s_\lambda(\blambda_h,\blambda_h)\leq \max\left\{C_{S,2},\frac{C_{B,2}}{h}\right\}\|\blambda_h\|^2_{\Gamma},\quad \forall\ \blambda_h\in\bLambda_h.
    \end{align}
    In the presence of Stokes-Stokes interfaces, then
    \begin{align}
        \min\left\{C_{S,1}h,\ C_{B,1}\right\}\|\blambda_h\|^2_{\Gamma} \leq s_\lambda(\blambda_h,\blambda_h)\leq \max\left\{C_{S,2},\frac{C_{B,2}}{h}\right\}\|\blambda_h\|^2_{\Gamma},\quad \forall\ \blambda_h\in\bLambda_h.
    \end{align}
\end{theorem}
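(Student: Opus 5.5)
The plan is to assemble the two-sided bounds from Lemmas~\ref{lemma4.2} and \ref{lemma4.4}. Throughout, we use that $\|\blambda_h\|_\Gamma^2 = \|\blambda_h^f\|_{\Gamma_\ff}^2 + \|\blambda_h^d\|_{\Gamma_\fp\cup\Gamma_\pp}^2 + \|\lambda_h^p\|_{\Gamma_\fp\cup\Gamma_\pp}^2$, since $\blambda_h^f$ lives only on $\Gamma_\ff$ and the pair $(\blambda_h^d,\lambda_h^p)$ lives only on $\Gamma_\fp\cup\Gamma_\pp$. We also use the splitting $s_\lambda = \sum_{i\le N_f} s_{\lambda,i} + \sum_{i>N_f} s_{\lambda,i}$. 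Both sums are nonnegative: the Stokes terms equal $\|\bu^*_\fhi\|_{a_{f,i}}^2$ plus BJS terms, as shown in the proof of Lemma~\ref{lemma4.4}, and the Biot terms are sums of squares by Lemma~\ref{lemma4.2}.

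\textbf{Upper bound.} Apply \eqref{e4.30} to the Stokes part and the upper bound in \eqref{Biot-bound} to the Biot part. Adding the two gives
\begin{align*}
s_\lambda(\blambda_h,\blambda_h) \le C_{S,2}\big(\|\blambda_h^f\|_{\Gamma_\ff}^2+\|\blambda_h^d\|_{\Gamma_\fp}^2+\|\lambda_h^p\|_{\Gamma_\fp}^2\big) + \frac{C_{B,2}}{h}\big(\|\blambda_h^d\|_{\Gamma_\fp\cup\Gamma_\pp}^2+\|\lambda_h^p\|_{\Gamma_\fp\cup\Gamma_\pp}^2\big).
\end{align*}
The $\Gamma_\fp$ terms appear in both groups, so a direct bound yields $(C_{S,2}+C_{B,2}/h)\|\blambda_h\|_\Gamma^2$. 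This is at most $2\max\{C_{S,2},C_{B,2}/h\}\|\blambda_h\|_\Gamma^2$, and the factor $2$ is absorbed by renaming a constant. Without Stokes-Stokes interfaces, the $\blambda_h^f$ term is simply absent and the same argument applies.

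\textbf{Lower bound without Stokes-Stokes interfaces.} In this case $\bLambda_h^f$ is trivial. Since the Stokes sum is nonnegative, $s_\lambda(\blambda_h,\blambda_h)\ge \sum_{i>N_f}s_{\lambda,i}(\blambda_h,\blambda_h)\ge C_{B,1}\|\blambda_h\|_\Gamma^2$ by Lemma~\ref{lemma4.2}.

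\textbf{Lower bound with Stokes-Stokes interfaces.} Combine \eqref{e4.29} with the same Biot lower bound. We have $s_\lambda \ge C_{S,1}h\|\blambda_h^f\|_{\Gamma_\ff}^2$ from \eqref{e4.29}, and $s_\lambda \ge C_{B,1}(\|\blambda_h^d\|^2+\|\lambda_h^p\|^2)$ from nonnegativity of the Stokes terms together with Lemma~\ref{lemma4.2}. Averaging the two inequalities gives
\[
s_\lambda \ge \tfrac12\min\{C_{S,1}h, C_{B,1}\}\|\blambda_h\|_\Gamma^2,
\]
and the $1/2$ is again absorbed into the constants.

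The only delicate point is this constant bookkeeping: overlapping $\Gamma_\fp$ contributions in the upper bound, and the need to average in the lower bound. Both only change constants by a factor of at most $2$. Everything substantive has already been done in Lemmas~\ref{lemma4.2} and \ref{lemma4.4}, so no step here is a genuine obstacle beyond checking the nonnegativity of each subdomain contribution.
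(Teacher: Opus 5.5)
Your proposal is correct and follows the paper's own argument: the theorem is obtained by combining the Biot bounds of Lemma~\ref{lemma4.2} with the Stokes bounds of Lemma~\ref{lemma4.4}, using that each subdomain contribution to $s_\lambda(\blambda_h,\blambda_h)$ is nonnegative. Your remark that the overlapping $\Gamma_\fp$ terms in the upper bound and the averaging step in the lower bound each cost a factor of $2$ is accurate, and rescaling the constants to absorb it is legitimate; the paper's one-line proof leaves this bookkeeping implicit.
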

\begin{proof}
    The assertion of the theorem follows from Lemma \ref{lemma4.2} and Lemma \ref{lemma4.4}.
\end{proof}
Theorem \ref{theorem4.3} shows that the interface operator $S_\lambda$ is positive definite, and Theorem \ref{theorem4.4} gives a bound on its field-of-values. We can therefore use GMRES to solve this linear problem. The following corollary follows from the field-of-values analysis in \cite{field_of_values}.

\begin{corollary}\label{c4.1}
Let $\br_k$ be the $k-$th GMRES residual for solving \eqref{e4.13} and let $|\cdot|$ denote the Euclidean vector norm. Then, it holds that
    \begin{align}
      |\br_k|\leq \left(\sqrt{1-Ch^2}\right)^k|\br_0|, \quad
      \text{if } \ |\Gamma_\ff|=0,\label{e4.43}\\
        |\br_k|\leq \left(\sqrt{1-Ch^4}\right)^k|\br_0|,\quad \text{if } \ |\Gamma_\ff|\neq 0.\label{e4.44}
    \end{align}
\end{corollary}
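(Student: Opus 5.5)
The corollary follows from the Eisenstat--Elman--Schultz field-of-values bound for GMRES applied to a nonsymmetric operator with positive definite symmetric part (see \cite{field_of_values}). That result states that if $S$ is a real matrix with $c := \min_{x\neq 0} \frac{x^T S x}{x^T x} > 0$, then the GMRES residuals satisfy
\begin{align*}
|\br_k| \leq \left(1 - \frac{c^2}{\|S\|^2}\right)^{k/2} |\br_0|,
\end{align*}
where $\|S\|$ is the matrix norm induced by the Euclidean norm. So the plan is to express $s_\lambda$ in the coefficient basis of $\bLambda_h$ and then bound two quantities: the lower field-of-values constant $c$, and the operator norm $\|S_\lambda\|$.

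\emph{Lower constant.} Theorem~\ref{theorem4.3} gives positivity, and Theorem~\ref{theorem4.4} gives the quantitative lower bound $s_\lambda(\blambda_h,\blambda_h) \geq c_0 \|\blambda_h\|_\Gamma^2$. Here $c_0 = C_{B,1}$ if $|\Gamma_\ff| = 0$, and $c_0 = \min\{C_{S,1}h, C_{B,1}\}$, which is $\sim h$, otherwise. I will use quasiuniformity of the traces of the subdomain grids on the interfaces to get equivalence of $L^2(\Gamma)$ and Euclidean norms up to a scaling $h^{d-1}$, i.e.\ $\|\blambda_h\|_\Gamma^2 \simeq h^{d-1}|\blambda|^2$. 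This common factor cancels in the ratio $c^2/\|S\|^2$. Equivalently, I can work throughout in the $L^2(\Gamma)$ inner product, in which $S_\lambda$ is defined, and then note that GMRES in the Euclidean norm of the scaled representation gives the same bound up to a constant factor, which can be absorbed.

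\emph{Upper bound on $\|S_\lambda\|$.} This is the main obstacle, because Theorem~\ref{theorem4.4} only bounds the quadratic form $s_\lambda(\blambda_h,\blambda_h)$, while for a nonsymmetric operator we need the bilinear bound $|s_\lambda(\blambda_h,\bmu_h)| \leq C h^{-1}\|\blambda_h\|_\Gamma\|\bmu_h\|_\Gamma$. I would obtain it directly from the subdomain representations \eqref{e4.25}--\eqref{e4.26} using Cauchy--Schwarz:
\begin{itemize}
\item For a Biot subdomain, the terms $\langle\bsigma^*\bn,\bmu^d\rangle$ and $\langle\bu^*\cdot\bn,\mu^p\rangle$ are controlled by a discrete trace inverse inequality, $\|\bsigma^*\bn\|_{\partial\Omega_i} \leq Ch^{-1/2}\|\bsigma^*\|_{\Omega_i}$. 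The volume norms $\|\bsigma^*\|_{\Omega_i}$ and $\|\bu^*\|_{\Omega_i}$ are in turn bounded by $Ch^{-1/2}\|\blambda_h\|_\Gamma$; this follows from the energy identity together with the upper bound in Lemma~\ref{lemma4.2} (and the proof of \cite[Theorem 3.1]{ManuBiot}). The remaining terms, such as $\langle\bmu^d\cdot\bn,\lambda^p\rangle$, are trivially bounded.
\item For a Stokes subdomain, the trace inequality combined with \eqref{e4.31} and \eqref{e4.30} gives $\|\bu^*_\fhi\|_{\partial\Omega_i} \leq C\|\blambda_h\|_\Gamma$.
\end{itemize}
Hence $\|S_\lambda\| \leq C h^{-1}$ in both cases.

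\emph{Combining.} The ratio is $c^2/\|S\|^2 \geq C h^2$ when $|\Gamma_\ff| = 0$, and $c^2/\|S\|^2 \geq C h^2 \cdot h^2 = C h^4$ otherwise. This yields \eqref{e4.43} and \eqref{e4.44}.
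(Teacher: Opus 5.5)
Your proposal is correct. It applies the same field-of-values estimate with the same lower constant from Theorem~\ref{theorem4.4}, but it handles the upper constant differently from the paper.

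The paper's proof inserts both inequalities of Theorem~\ref{theorem4.4} directly into the estimate of \cite{field_of_values}. In effect it treats $\max\{C_{S,2},C_{B,2}/h\}$, which is an upper bound on $s_\lambda(\blambda_h,\blambda_h)/\|\blambda_h\|_\Gamma^2$, as if it were a bound on $\|S_\lambda\|$. As you point out, for real vectors $x^TSx$ sees only the symmetric part of $S$, and $S_\lambda$ is genuinely nonsymmetric here. This comes from the terms $\langle\bmu_h^d\cdot\bn_p,\lambda_h^p\rangle_{\Gamma_\fp^i}-\langle\blambda_h^d\cdot\bn_p,\mu_h^p\rangle_{\Gamma_\fp^i}$ and the BJS terms in \eqref{e4.25}. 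It also comes from the Biot part $\langle\bsigma^*_{ph,i}(\blambda_h)\bn_p,\bmu_h^d\rangle-\langle\bu^*_{ph,i}(\blambda_h)\cdot\bn_p,\mu_h^p\rangle$, which is nonsymmetric because of the $\alpha$-coupling between stress and pressure in \eqref{e4.6}.

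Your continuity estimate $|s_\lambda(\blambda_h,\bmu_h)|\le Ch^{-1}\|\blambda_h\|_\Gamma\|\bmu_h\|_\Gamma$ supplies exactly this missing ingredient. You derive it from \eqref{e4.25}--\eqref{e4.26}, discrete trace inequalities, \eqref{e4.31}, \eqref{e4.30} and Lemma~\ref{lemma4.2}, and it gives the same powers $h^2$ and $h^4$. The paper's route is a one-line substitution, but it rests on an unstated identification of the quadratic-form bound with the operator norm.

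You also handle the mass-matrix scaling $h^{d-1}$ explicitly, noting that it cancels in $c^2/\|S\|^2$; the paper leaves this implicit. Keep that coefficient-matrix formulation. The alternative of transferring an $L^2(\Gamma)$ bound would leave a constant prefactor in front of the rate, which cannot be absorbed into it.

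One step should be written out rather than deferred to \cite{ManuBiot}. The energy identity together with Lemma~\ref{lemma4.2} controls $\|A^{1/2}(\bsigma^*_{ph,i}+\alpha p^*_{ph,i}\bI)\|_{\Omega_i}$, $\|\bK^{-1/2}\bu^*_{ph,i}\|_{\Omega_i}$ and $s_0^{1/2}\|p^*_{ph,i}\|_{\Omega_i}$. It does not control $\|\bsigma^*_{ph,i}\|_{\Omega_i}$ by itself. To obtain $\|\bsigma^*_{ph,i}\|_{\Omega_i}\le Ch^{-1/2}\|\blambda_h\|_\Gamma$ without a factor $s_0^{-1/2}$, first bound $\|p^*_{ph,i}\|_{\Omega_i}\le C(\|\bu^*_{ph,i}\|_{\Omega_i}+h^{-1/2}\|\lambda_h^p\|_{\Gamma_\pp^i\cup\Gamma_\fp^i})$. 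This follows from the Darcy inf-sup condition \eqref{e4.2} applied to \eqref{e4.6c}, together with a discrete trace inequality for $\bv_{ph,i}\cdot\bn_p$.
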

\begin{proof}
Using Theorem \ref{theorem4.4}, the statement follows by taking
\begin{align*}
&  C = \frac{C_{B,1}}{\max\left\{C_{S,2}h,C_{B,2}\right\}} \quad \text{if } \ |\Gamma_\ff| = 0,\\
  & C = \frac{\min\left\{C_{S,1},\ \frac{C_{B,1}}{h}\right\}}{\max\left\{C_{S,2}h,C_{B,2}\right\}} \quad
  \text{if } \ |\Gamma_\ff| \neq 0.
    \end{align*}
\end{proof}

\subsection{Discretization error estimate}

\begin{theorem}\label{theorem4.5}
    Let $(\bu_\fh,p_\fh,\bsigma_\ph,\disp_\ph,\bgamma_\ph,\bu_\ph,p_\ph,\blambda_h)\in\widetilde\bV_\fh\times\rW_\fh\times\widetilde{\mathbb{X}}_\ph\times\bV_{dh}\times\mathbb{Q}_\ph\times\widetilde{\bV}_\ph\times\rW_\ph\times\bLambda_h$ be the solution to \eqref{e4.4} and let $(\bu_f,p_f,\bsigma_p,\disp_p,\bgamma_p,\bu_p,p_p,\blambda)\in\bV_f\times\rW_f\times\mathbb{X}_p\times\bV_d\times\mathbb{Q}_p\times{\bV}_p\times\rW_p\times\bLambda_h$ of sufficient regularity, be the solution of the continuous problem. Under the assumptions of Lemma~\ref{lemma3.1}, there exists a positive constant $C$ independent of $h_f,\ h_p,\ s_0$, and $A_{min}$ such that
\begin{align*}
        &\|\bu_f-\bu_\fh\|_{1,\Omega_f}+\|(\bu_f-\blambda^d)-(\bu_\fh-\blambda_h^d)\|_{a_{\bjs}}+\|p_f-p_\fh\|_{\Omega_f}+\|A^{1/2}(\bsigma_p-\bsigma_\ph)\|_{\Omega_p}\\
  &\qquad +\|\nabla\cdot(\bsigma_p-\bsigma_\ph)\|_{\Omega_p}+\|A^{1/2}((\bsigma_p+\alpha p_p\mathbf{I})-(\bsigma_{ph}+\alpha p_\ph\mathbf{I}))\|_{\Omega_p}+\|\disp_p-\disp_\ph\|_{\Omega_p}
  \\
  &\qquad
  +\|\bgamma_p-\bgamma_\ph\|_{\Omega_p}
  +\|\bu_p-\bu_\ph\|_{\bH(\mathrm{div};\Omega_p)}+\|p_p-p_\ph\|_{\Omega_p}
  \\
        & \quad \leq C\Big(h_f^{s_{\bu_f}}\|\bu_f\|_{s_{\bu_f}+1,\Omega_f}+h_f^{s_{p_f}+1}\|p_f\|_{s_{p_f}+1,\Omega_p}+h_p^{s_{\bsigma_p}+1}\big(\|\bsigma_p\|_{s_{\bsigma_p}+1,\Omega_p}+\|\nabla\cdot\bsigma_p\|_{s_{\bsigma_p}+1,\Omega_p}\big)\\
        &\qquad+h_p^{s_{\disp_p}+1}\|\disp_p\|_{s_{\disp_p}+1,\Omega_p}+h_p^{s_{\bgamma_p}+1}\|\bgamma_p\|_{s_{\bgamma_p}+1,\Omega_p}+h_p^{s_{\bu_p}+1}\big(\|\bu_p\|_{s_{\bu_f}+1,\Omega_p}+\|\nabla\cdot\bu_p\|_{s_{\bu_f}+1,\Omega_p}\big)\\
  &\qquad+h_p^{s_{p_p}+1}\|p_p\|_{s_{p_p}+1,\Omega_p}
  \Big),
    \end{align*}
    where $s_{\bu_f},\ s_{p_f},\ s_{\bsigma_p},\ s_{\disp_p},\ s_{\bgamma_p},\ s_{\bu_p},\ s_{p_p}$ are the polynomial degrees corresponding to the finite element spaces $\bV_\fh,\ \rW_\fh,\ \mathbb{X}_\ph,\ \bV_\dh,\ \mathbb{Q}_\ph,\ \bV_\ph,\ \rW_\ph$, respectively, and $\|\bu_f\|_{a_{\bjs}}:=a_{\bjs}(\bu_f,\bu_f).$
\end{theorem}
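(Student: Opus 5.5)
The plan is to first invoke Remark~\ref{r1}: the solution of the domain decomposition formulation \eqref{e4.4} coincides, component by component, with the solution of the monolithic scheme \eqref{e3.10}. The hybrid Lagrange multipliers on $\Gamma_\ff\cup\Gamma_\pp$ simply enforce the conformity that is built into $\bV_\fh$, $\mathbb{X}_\ph$, $\bV_\ph$. Hence it suffices to prove the estimate for the monolithic Stokes--Biot mixed finite element method \eqref{e3.10}. That is a standard a priori analysis, with the Nitsche boundary terms as the only nonstandard ingredient. I will follow the approach of \cite{fpsi-mixed-elast}, adapting the Stokes part to Nitsche's method.

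The first step is to write the error equations. Consistency holds for the continuous solution: integration by parts shows that it satisfies \eqref{e3.10} with $\blambda^d=\disp_p|_{\Gamma_\fp}$ and $\lambda^p=p_p|_{\Gamma_\fp}$. This includes the Nitsche terms, which are consistent because $\bu_f=\bg_f^D$ on $\Gamma_f^D$. Subtracting gives Galerkin orthogonality.

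Next I split each error into an approximation part and a discrete part, $e=(\cdot-I_h\cdot)+(I_h\cdot-\cdot_h)$, using suitable interpolants:
\begin{itemize}
\item for $\bu_f,p_f$, a Stokes interpolant preserving the discrete divergence against $\rW_\fh$;
\item for $\bsigma_p$ and $\bu_p$, the canonical mixed interpolants $\Pi$ with commuting property $\nabla\cdot\Pi=Q_h\nabla\cdot$;
\item for $\disp_p,\bgamma_p,p_p$, $L^2$-projections;
\item for the interface traces, $L^2$-projections onto $\bLambda^d_h,\Lambda^p_h$.
\end{itemize}
With these choices, many cross terms $b(\cdot,\cdot)$ involving approximation errors vanish, in particular $b_p$, $b_e$ and $b_{\Gamma_2}$ against the projected multipliers.

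The second step is an energy estimate. I test the error equations with the discrete errors, exactly as in the uniqueness proof of the monolithic scheme. The skew coupling terms cancel, and Lemma~\ref{lemma3.1} yields a bound for
\begin{align*}
\|\bu_\fh\text{-error}\|_{1,\Omega_f}^2+a_{\bjs}+\|A^{1/2}(\cdot)\|^2+\|\bK^{-1/2}\cdot\|^2+\|s_0^{1/2}\cdot\|^2
\end{align*}
by products of interpolation errors with discrete errors. The Nitsche terms $\langle \bD(\bu_f-I_h\bu_f)\bn_f,\cdot\rangle_{\Gamma_f^D}$ and $h^{-1}\langle\cdot,\cdot\rangle_{\Gamma_f^D}$ are handled with the discrete trace/inverse inequality \eqref{e3.12} and the scaled trace inequality $\|\phi\|_{\partial E}^2\le C(h^{-1}\|\phi\|_E^2+h|\phi|_{1,E}^2)$. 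These give the $h_f^{s_{\bu_f}}$ rate in the Nitsche norm of Lemma~\ref{lemma4.3}. The non-matching interface terms on $\Gamma_\fp$ are bounded by trace inequalities together with projection error estimates.

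The third step recovers the remaining variables through inf-sup conditions:
\begin{itemize}
\item $p_f$ from \eqref{e3.1};
\item $\disp_p$ and $\bgamma_p$ from \eqref{e3.4};
\item $p_p$ from \eqref{e3.3}, which avoids dependence on $s_0$;
\item $\blambda^d$ and $\lambda^p$ by choosing $\btau_\ph,\bv_\ph$ with prescribed normal traces;
\item the divergence errors $\nabla\cdot(\bsigma_p-\bsigma_\ph)$ and $\nabla\cdot(\bu_p-\bu_\ph)$ from \eqref{e3.8d} and \eqref{e3.8g}, using the commuting property, since $\nabla\cdot\mathbb{X}_\ph\subset\bV_\dh$ and $\nabla\cdot\bV_\ph\subset\rW_\ph$.
\end{itemize}
For $\nabla\cdot\bu_p$, equation \eqref{e3.8g} also involves the already controlled $A$-term and $s_0 p_p$. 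To keep the bound uniform in $s_0$ and $A_{\min}$, I will bound $\|A^{1/2}(\bsigma+\alpha p\bI)\|$ rather than $\bsigma$ alone. The bound on $\|\bsigma\|$ itself then comes from the inf-sup-based control of $\alpha p_p$.

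The main obstacle I expect is the robustness with respect to $s_0$ and $A_{\min}$. The coupled energy estimate only controls $A^{1/2}(\bsigma_p+\alpha p_p\bI)$ and $s_0^{1/2}p_p$, so the pressure must be obtained from the Darcy inf-sup. This creates a circular dependence with the stress and velocity terms. I would try to break it by first bounding $p_p$ via \eqref{e3.3} in terms of $\|\bK^{-1/2}\bu_p\|$ and interface terms, then absorbing with Young's inequality, as in \cite{fpsi-mixed-elast}.
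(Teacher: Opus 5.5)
Your proposal follows essentially the same route as the paper, whose proof is only a sketch: adapt the monolithic analysis of \cite{fpsi-mixed-elast} (your reduction via Remark~\ref{r1} is the implicit first step) and handle the Nitsche terms through the coercivity of Lemma~\ref{lemma3.1} and the restricted Stokes inf-sup condition \eqref{e3.1}, which is what you plan. One caution when you carry it out: if you bound $p_\ph$ with the unrestricted Darcy inf-sup \eqref{e3.3} and $\lambda^p_h$ with a generic normal-trace lifting, each bound feeds the other with $O(1)$ constants and Young's inequality cannot close that loop, so either take the Darcy inf-sup over velocities with zero normal trace on $\Gamma_\fp$ (available since $|\Gamma_p^{D_p}|>0$, just as \eqref{e3.1} and \eqref{e3.4} are restricted) or use a divergence-free lifting for $\lambda^p_h$.
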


\begin{proof}
The proof is based on an adaptation of the analysis developed \cite{fpsi-mixed-elast} for the quasistatic problem. The main difference is that here we use Nitsche's method for the Dirichlet boundary conditions in the Stokes subproblem. This can be handled by the use of the modified Stokes inf-sup condition \eqref{e3.1} and the coercivity of the bilinear form $a_f$ established in Lemma~\ref{lemma3.1}. We omit further details.
\end{proof}




\section{Numerical experiments}\label{sec:numer}

In this section we present the results from a series of numerical tests illustrating the performance of the proposed method. The numerical scheme is implemented on rectangular grids using the finite element package deal.II \cite{dealii2019design,dealii9.6}.

We use the finite element triple $\mathbb{X}_{ph}\times\bV_{dh}\times\mathbb{Q}_{ph}=(\mathcal{BDM}_1)^2\times (\mathcal{Q}_0)^2\times\mathcal{Q}_0$ for elasticity \cite{fe_elast_1,fe_elast_2}, the finite element pair $\bV_\ph\times\rW_\ph = \mathcal{BDM}_1\times\mathcal{Q}_0$ for Darcy, and the Taylor-Hood finite element $\bV_\fhi\times\rW_\fhi = (\mathcal{Q}_2)^2\times\mathcal{Q}_1$ for Stokes \cite{BrezziFortin}. According to \eqref{e3.7}, for the Lagrange multiplier spaces this choice yields piecewise discontinuous linears for $\Lambda_h^p,\ \bLambda_h^d$ and continuous quadratics for $\bLambda_h^f$. See Table \ref{tab:polynomial-degrees} for the associated polynomial degrees. 

We note that this choice of $\mathbb{X}_{ph}\times\bV_{dh}\times\mathbb{Q}_{ph}$ fits into the framework of the multipoint stress-flux mixed finite element method \cite{msmfe-quads,msfmfe-Biot,fpsi-msfmfe} where the stress, Darcy velocity, and rotation variables can be locally eliminated, resulting in a very efficient positive definite cell-centered scheme for the displacement and Darcy pressure in each Biot subdomain. For solving the interface problem we use non-restarted GMRES with a tolerance of $10^{-8}$ on the relative residual $\frac{|\mathbf{r}_k|}{|\mathbf{r}_0|}$ as the stopping criteria. We consider the isotropic case for elasticity, $\boldsymbol{\sigma}_e = \lambda_p(\nabla\cdot\boldsymbol{\eta}_p)\boldsymbol{\mathrm{I}}+2\mu_p\boldsymbol{\mathrm{D}}(\boldsymbol{\eta}_p)$, where $0 < \lambda_\text{min}\leq\lambda_p(x) \leq \lambda_\text{max}$ and $0 < \mu_\text{min} \leq \mu_p(x) \leq \mu_\text{max}$ are the Lam\'{e} parameters. In this case,
\begin{align*}
    A(\boldsymbol{\tau})
= \frac{1}{2\mu_p} \left(\boldsymbol{\tau} -
\frac{\lambda_p}{2\mu_p + d\lambda_p}
\,\operatorname{tr}(\boldsymbol\tau)\, \boldsymbol{\mathrm{I}}
\right),
\qquad
A^{-1}(\boldsymbol\tau)
=
2\mu_p\, \boldsymbol\tau
+
\lambda_p \operatorname{tr}(\boldsymbol{\tau})\, \boldsymbol{\mathrm{I}},
\end{align*}
with $A_\text{min}=1/(2\mu_\text{max}+d\lambda_\text{max})$ and $A_\text{max} = 1/(2\mu_\text{min})$. 

We investigate the rates of convergence, number of GMRES iterations, and robustness with respect to $s_0$ for several subdomain configurations, including a case with non-matching grids along the Stokes–Biot interface.
\renewcommand{\tabcolsep}{4.3pt}
\begin{table}[ht]
\centering
\caption{Degree of polynomials associated with the finite element spaces used in the numerical experiments.}
\label{tab:polynomial-degrees}
\begin{tabular}{cccccccccc}
\hline
$\bV_\fh:s_{\bu_f}$ & $\rW_\fh:s_{p_f}$ & $\mathbb{X}_p:s_{\bsigma_p}$ & $\bV_d:s_{\disp_p}$ & $\mathbb{Q}_p:s_{\bgamma_p}$ & $\bV_p:s_{\bu_p}$ & $\rW_p:s_{p_p}$ & $\bLambda_h^f:s_{\blambda^f}$ & $\Lambda_h^p:s_{\lambda^p}$ & $\bLambda_h^d:s_{\blambda^d}$\\
\hline
$2$ & $1$ & $1$ & $0$ & $0$ & $1$ & $0$ & $2$ & $1$ & $1$ \\
\hline
\end{tabular}
\end{table}

In three of the examples we consider a test case with domain $\Omega = (0, 2)^2$ and a known analytical solution and Dirichlet boundary conditions on $\partial \Omega$. We associate the left half $(0, 1) \times (0, 2)$ with the Stokes flow, while the right half $(1, 2)\times(0, 2)$ represents the flow in the poroelastic structure governed by the Biot system. In one of the examples the domain is $\Omega = (0, 2)\times(0,1)$ and it is split in a similar way. The physical parameters are $\boldsymbol{\mathrm{K}} = \boldsymbol{\mathrm{I}}, \mu = 1, \alpha = 1, \alpha_{\bjs} = 1,
s_0 = 1,\ 10^{-3}, \lambda_p = 1$, and $\mu_p = 1$. The solution in the Stokes region is
\begin{align*}
    \bu_f = \begin{pmatrix}
        -3x +\cos(\pi y)\\
        y+\sin(2\pi x)
    \end{pmatrix},\quad p_f = \sin(\pi x)\cos\left(2\pi y\right).
\end{align*} 
The Biot solution $(\boldsymbol{u}_p, p_p, \boldsymbol{\eta}_p)$ is chosen accordingly to satisfy the interface conditions at $x = 1$:
\begin{align*}
    &\bu_p = \pi\begin{pmatrix}
        -\cos(\pi x)\cos(2\pi y)\\
        2\sin(\pi x)\sin(2\pi y)
    \end{pmatrix},\quad
    p_p = \sin(\pi x)\cos(2\pi y) + 6,\\
    \disp_p = &\begin{pmatrix}
        -\frac{x}{3}-\frac{8}{3}+\cos(\pi y)+\pi\cos(\pi x)\cos(2\pi y)-\frac{2\pi}{3}\cos(\frac{\pi x}{2})\cos(\pi y)\\
        y+\sin(2\pi x) + 2\pi\sin(\pi x)\sin(2\pi y)+\pi(2\cos(2\pi x)-\sin(\pi y))
    \end{pmatrix}.
\end{align*}

From Corollary \ref{c4.1} we obtain the following bound for the iteration count for $h$ small enough:
\begin{align*}
    \text{iteration count} = \mathcal{O}(h^{-2})\ \text{if }|\Gamma_\ff| = 0 \quad \text{and} \quad \mathcal{O}(h^{-4})\ \text{if }|\Gamma_\ff| \neq 0.
\end{align*}
This bound is usually too pessimistic in practice. In particular, if the interface operator is normal the number of iterations required for GMRES to converge is $\mathcal{O}\left(\sqrt\frac{\lambda_{\max}}{\lambda_{\min}}\right)$ where $\lambda_{\max}$ and $\lambda_{\min}$ are the largest and smallest eigenvalues of $S_\lambda$ \cite{gmres_1,gmres_2}. The bounds in Theorem \ref{theorem4.4} in this case give
\begin{align}\label{e5.1}
  \text{iteration count} =   \mathcal{O}(h^{-0.5})\ \text{if }|\Gamma_\ff| = 0 \quad \text{and} \quad \mathcal{O}(h^{-1})\ \text{if }|\Gamma_\ff| \neq 0.
\end{align}

We present examples for four different domain decomposition and grid configurations, see  Figure~\ref{fig:four-examples}. The results from each example are reported in a table format. A graphical representation of the results can be found at the end of the section in Figure~\ref{fig:results}.

\begin{figure}
  \centering
  \includegraphics[width=.22\textwidth]{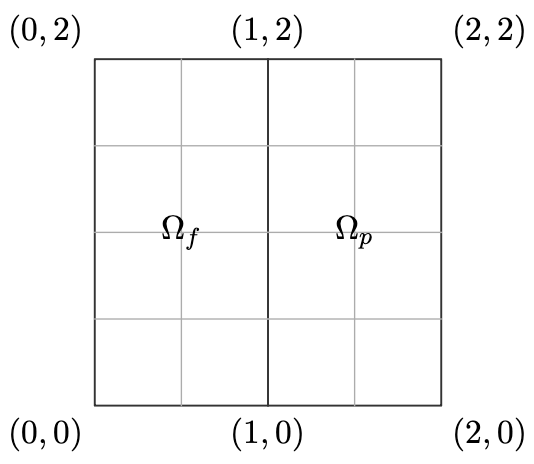}
  \includegraphics[width=.22\textwidth]{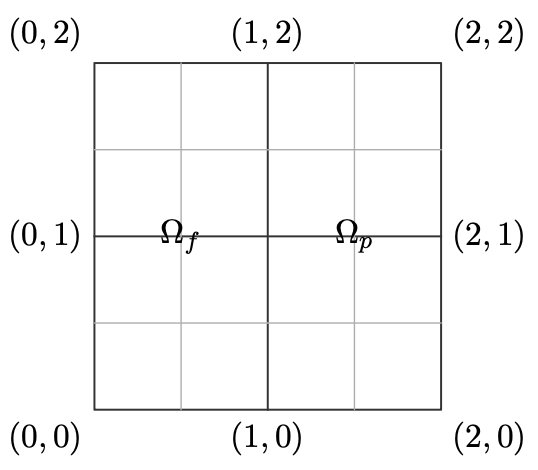}
  \includegraphics[width=.32\textwidth]{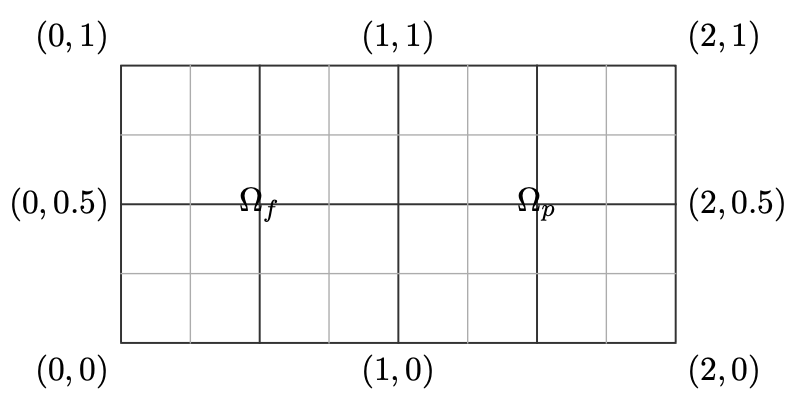}
  \includegraphics[width=.22\textwidth]{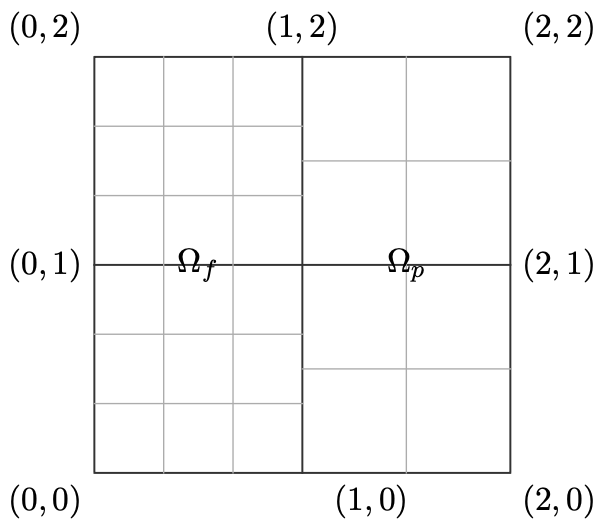}
  \caption{Domain decomposition and initial grids for Examples 1 to 4.}
  \label{fig:four-examples}
\end{figure}

The computed solution with the setup in Example 2 using $2\times 2$ domain decomposition and $h = 1/64$ is displayed in Figure~\ref{fig:solution}. In some of the pots, different color map ranges are used in the Stokes and Biot regions in order to better display the variation in the solution.

\begin{figure}
  \centering
  \includegraphics[width=.33\textwidth]{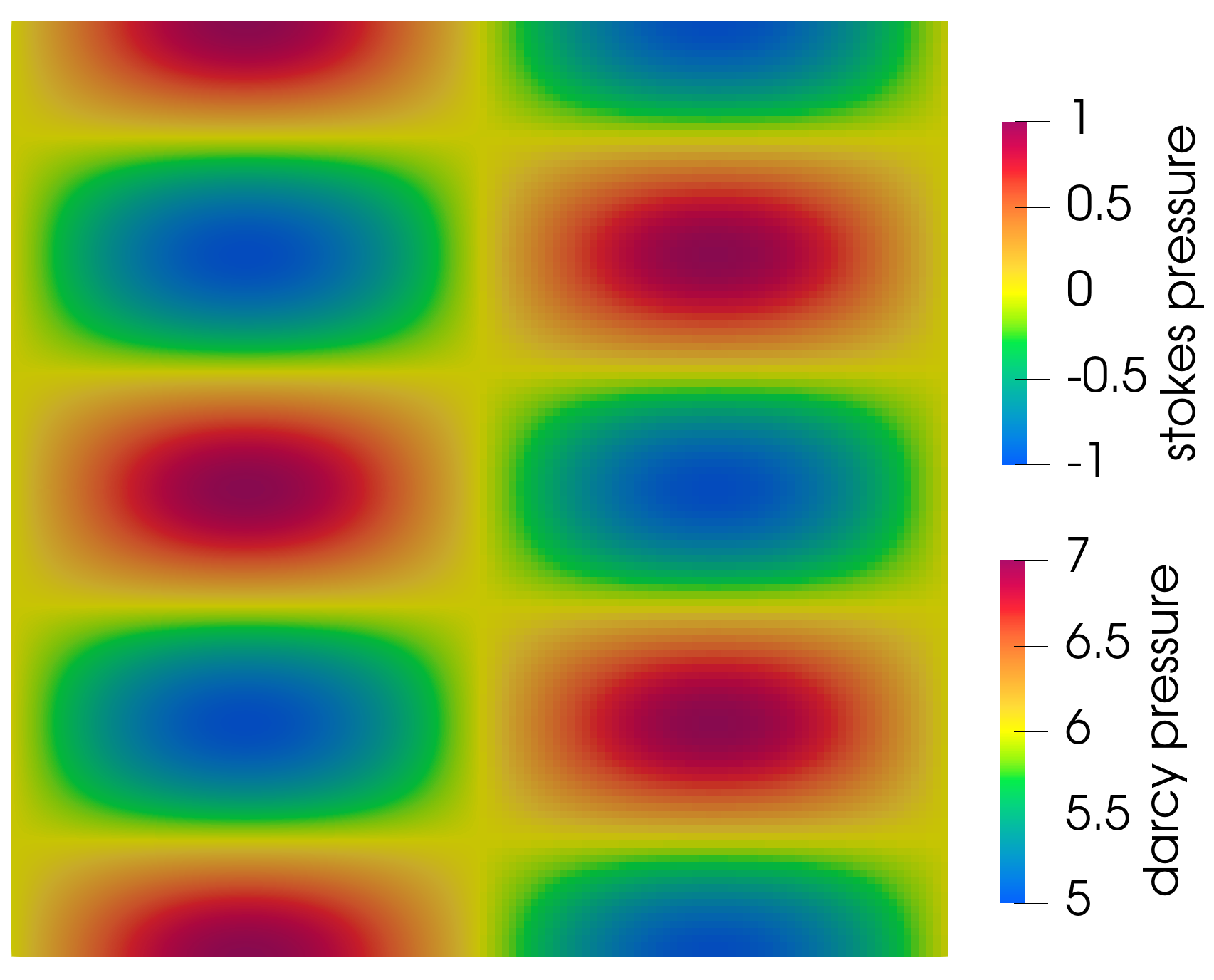}
  \includegraphics[width=.33\textwidth]{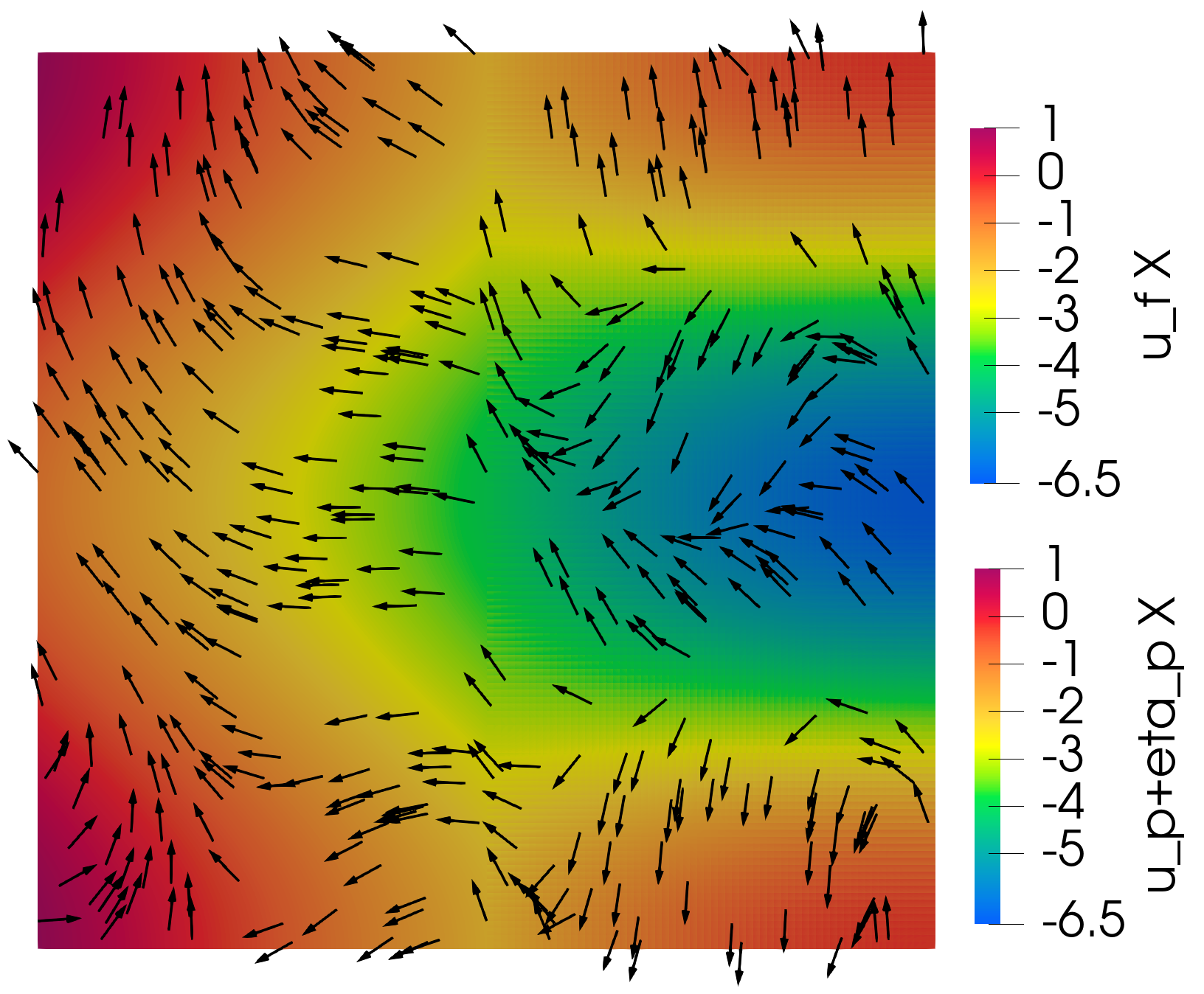}
  \includegraphics[width=.33\textwidth]{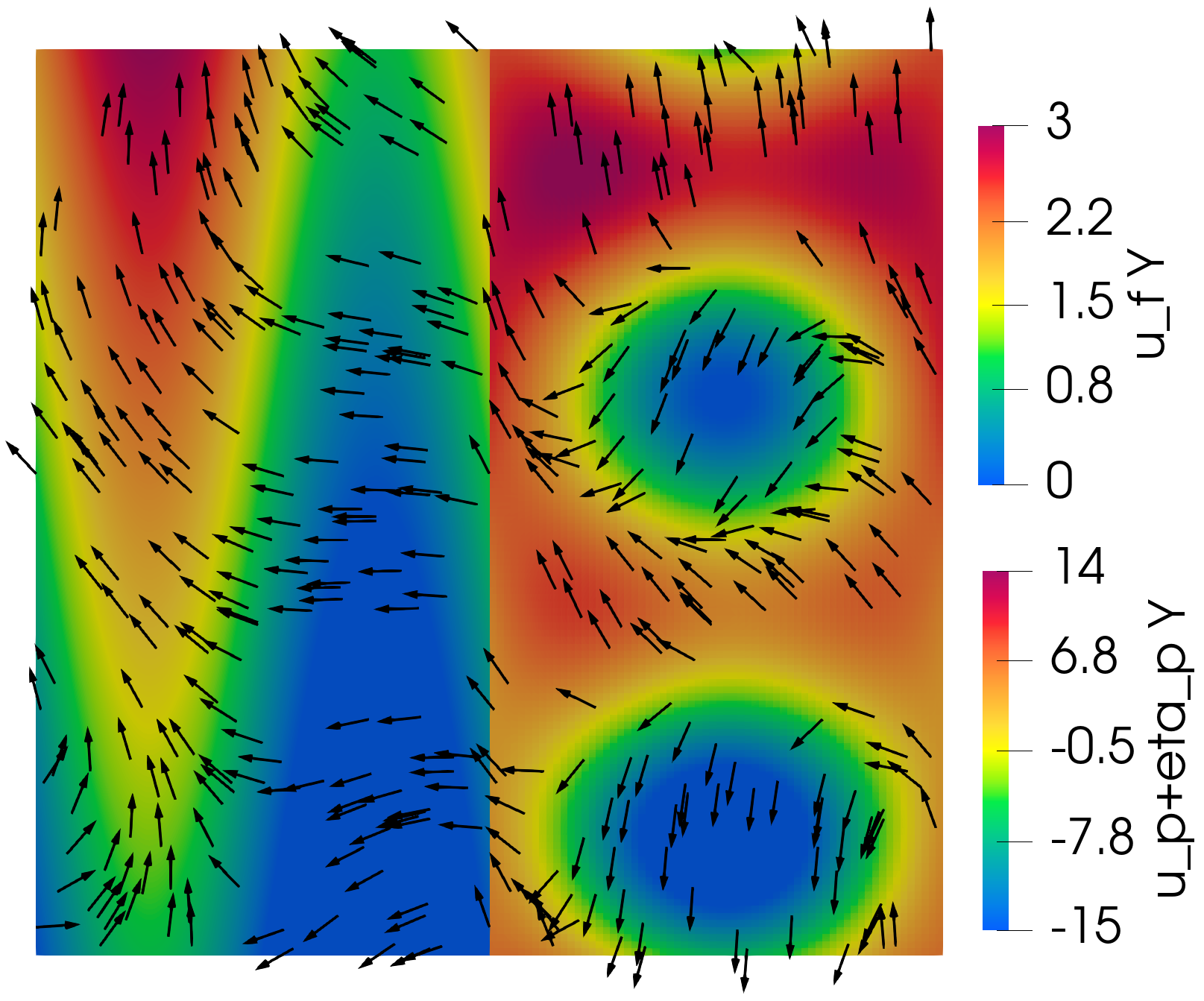}

  \includegraphics[width=.33\textwidth]{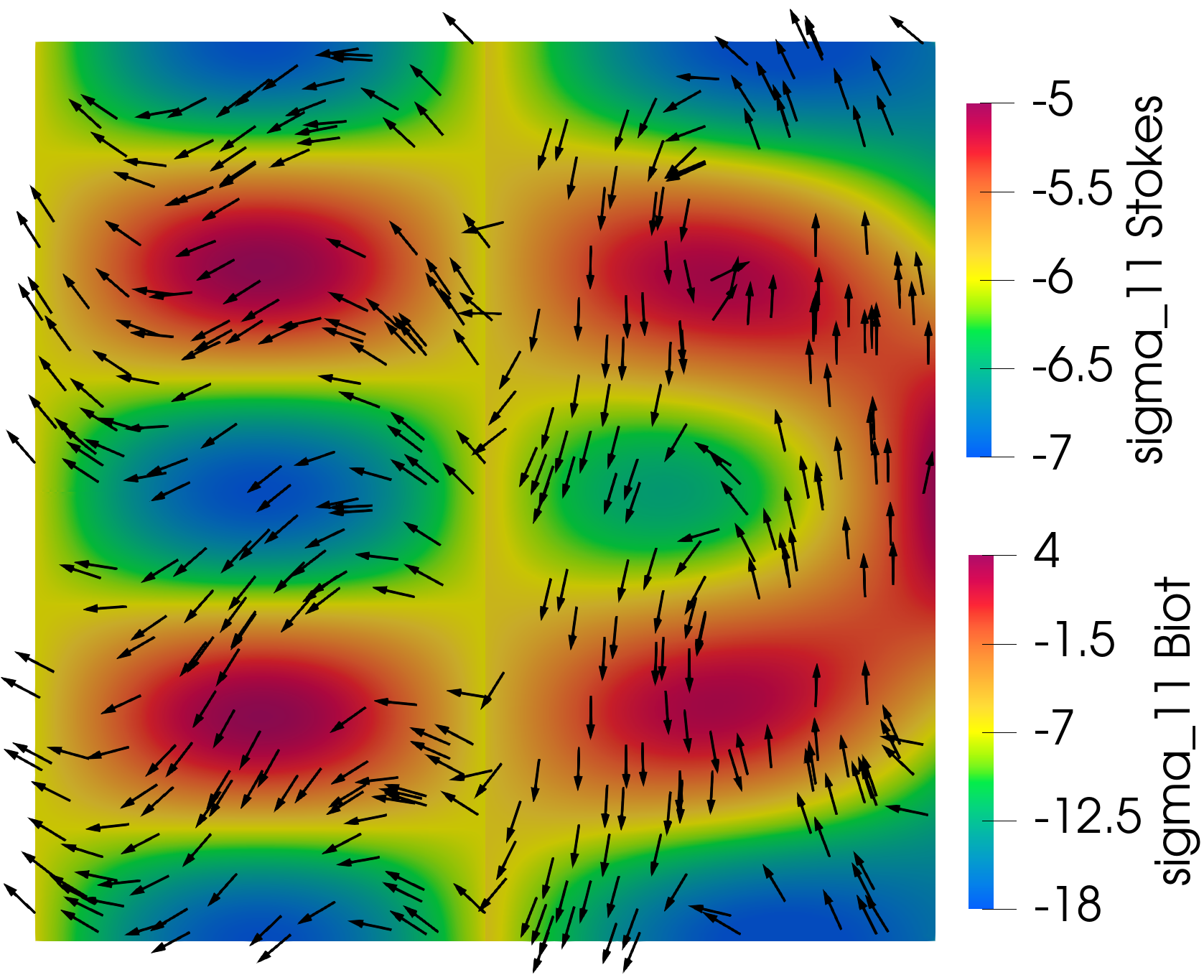}
  \includegraphics[width=.33\textwidth]{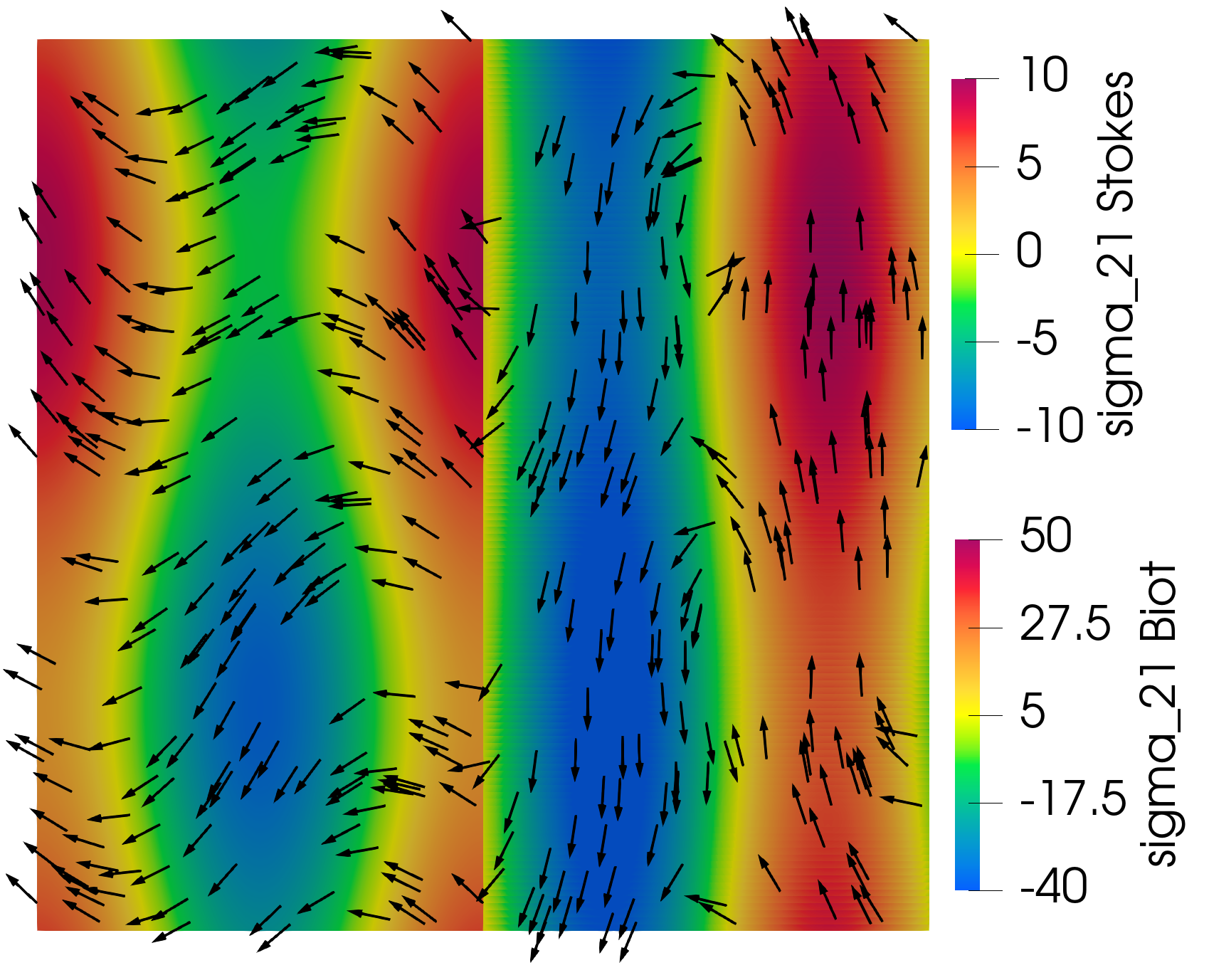}
  \caption{Computed solution in Example 2 with $2\times 2$ domain decomposition and $h = 1/64$. Top row: pressure: $p_f$, $p_p$; $x$-component of velocity: $\bu_{f,x}$, $\bu_{p,x} + \partial_t \disp_{p,x}$; $y$-component of velocity: $\bu_{f,y}$, $\bu_{p,y} + \partial_t\disp_{p,y}$. Bottom row: $xx$-component of stress: $\bsigma_{f,xx}$, $\bsigma_{p,xx}$; $yx$-component of stress: $\bsigma_{f,yx}$, $\bsigma_{p,yx}$. The arrows represent the velocity or stress vectors and are not scaled.}
  \label{fig:solution}
\end{figure}

\subsection{Example 1}

We use two subdomains as shown in the first panel in Figure~\ref{fig:four-examples}. We run the test on a sequence of mesh refinements, starting with an initial grid of $2\times 4$ in each subdomain. In Tables \ref{table2} and \ref{table3} we report the number of GMRES iterations, numerical errors, and convergence rates for $s_0 = 1,\ 10^{-3}$, respectively. The method exhibits growth in the number of iterations at the rate of $O(h^{-0.5})$ as predicted by \eqref{e5.1} and we observe at least $O(h)$ convergence for all norms as predicted by Theorem \ref{theorem4.5}. Furthermore, we note that there is no change in the number of GMRES iterations and rates of convergence for different values of $s_0$.

\begin{table}
\caption{Convergence for 2x1 domain decomposition with each initial subdomain refinement 2x4 and $s_0 = 1$.}
\label{table2}
\centering
\begin{tabular*}{\textwidth}{@{\extracolsep{\fill}}cccccccccccc} 
\toprule
$h$ & dofs & 
\multicolumn{2}{c}{\# GMRES} & 
\multicolumn{2}{c}{$||\nabla(u_f-u_{fh})||_{\Omega_f}$} & 
\multicolumn{2}{c}{$||p_f-p_{fh}||_{\Omega_f}$} &
\multicolumn{2}{c}{$||\eta_p-\eta_{ph}||_{\Omega_p}$} & 
\multicolumn{2}{c}{$||\gamma-\gamma_{h}||_{\Omega_p}$}\\ \hline
1/2 & 24 & 19 & rate & 2.72e-01 & rate & 3.36e+00 & rate & 8.06e-01 & rate & 7.93e-01 & rate\\ 
1/4 & 48 & 29 & -0.61 & 1.10e-01 & 1.31 & 7.57e-01 & 2.15 & 4.24e-01 & 0.93 & 5.11e-01 & 0.63\\ 
1/8 & 96 & 43 & -0.57 & 2.92e-02 & 1.92 & 2.04e-01 & 1.89 & 2.10e-01 & 1.01 & 2.60e-01 & 0.98\\
1/16 & 192 & 61 & -0.50 & 7.46e-03 & 1.97 & 5.33e-02 & 1.93 & 1.04e-01 & 1.01 & 1.30e-01 & 1.00\\ 
1/32 & 384 & 86 & -0.50 & 1.88e-03 & 1.99 & 1.36e-02 & 1.97 & 5.19e-02 & 1.00 & 6.47e-02 & 1.00\\ 
1/64 & 768 & 121 & -0.49 & 4.73e-04 & 1.99 & 3.43e-03 & 1.99 & 2.59e-02 & 1.00 & 3.23e-02 & 1.00 \\
\hline
\end{tabular*}
\vspace{0.2em}
\begin{tabular*}{\textwidth}{@{\extracolsep{\fill}}ccccccccccc} \hline
$h$ & 
\multicolumn{2}{c}{$||p_p-p_{ph}||_{\Omega_p}$} & 
\multicolumn{2}{c}{$||u_p-u_{ph}||_{\Omega_p}$} & 
\multicolumn{2}{c}{$||\nabla\cdot(u_p-u_{ph})||_{\Omega_p}$} & 
\multicolumn{2}{c}{$||\sigma_p-\sigma_{ph}||_{\Omega_p}$} & 
\multicolumn{2}{c}{$||\nabla\cdot(\sigma_p-\sigma_{ph})||_{\Omega_p}$}\\ \hline
1/2 & 1.04e-01 & rate & 1.01e+00 & rate & 1.01e+00 & rate & 6.12e-01 & rate & 6.59e-01 & rate\\ 
1/4 & 4.47e-02 & 1.21 & 2.35e-01 & 2.11 & 5.51e-01 & 0.87 & 3.21e-01 & 0.93 & 3.45e-01 & 0.93\\ 
1/8 & 2.31e-02 & 0.95 & 6.04e-02 & 1.96 & 2.80e-01 & 0.98 & 1.37e-01 & 1.23 & 1.79e-01 & 0.95\\ 
1/16 & 1.16e-02 & 0.99 & 1.54e-02 & 1.97 & 1.40e-01 & 1.00 & 6.48e-02 & 1.08 & 9.00e-02 & 0.99\\ 
1/32 & 5.82e-03 & 1.00 & 3.88e-03 & 1.99 & 7.01e-02 & 1.00 & 3.20e-02 & 1.02 & 4.51e-02 & 1.00\\ 
1/64 & 2.91e-03 & 1.00 & 9.75e-04 & 1.99 & 3.50e-02 & 1.00 & 1.59e-02 & 1.00 & 2.26e-02 & 1.00\\ 
\bottomrule
\end{tabular*}

\end{table}

\begin{table}
\caption{Convergence for 2x1 domain decomposition with each initial subdomain refinement 2x4 and $s_0 = 10^{-3}$.}
\label{table3}
\centering
\begin{tabular*}{\textwidth}{@{\extracolsep{\fill}}cccccccccccc} 
\toprule
h & dofs & 
\multicolumn{2}{c}{\# GMRES} & 
\multicolumn{2}{c}{$||\nabla(u_f-u_{fh})||_{\Omega_f}$} & 
\multicolumn{2}{c}{$||p_f-p_{fh}||_{\Omega_f}$} &
\multicolumn{2}{c}{$||\eta_p-\eta_{ph}||_{\Omega_p}$} & 
\multicolumn{2}{c}{$||\gamma-\gamma_{h}||_{\Omega_p}$}\\ \hline
1/2 & 24 & 19 & rate & 2.73e-01 & rate & 3.37e+00 & rate & 8.06e-01 & - & 7.93e-01 & rate\\ 
1/4 & 48 & 30 & -0.66 & 1.10e-01 & 1.31 & 7.55e-01 & 2.16 & 4.24e-01 & 0.93 & 5.11e-01 & 0.63\\
1/8 & 96 & 42 & -0.49 & 2.92e-02 & 1.92 & 2.04e-01 & 1.89 & 2.10e-01 & 1.01 & 2.60e-01 & 0.98\\ 
1/16 & 192 & 61 & -0.54 & 7.46e-03 & 1.97 & 5.33e-02 & 1.93 & 1.04e-01 & 1.01 & 1.30e-01 & 1.00\\ 
1/32 & 384 & 86 & -0.50 & 1.88e-03 & 1.99 & 1.36e-02 & 1.97 & 5.19e-02 & 1.00 & 6.47e-02 & 1.00\\ 
1/64 & 768 & 120 & -0.48 & 4.73e-04 & 1.99 & 3.44e-03 & 1.99 & 2.59e-02 & 1.00 & 3.23e-02 & 1.00\\ 
\hline
\end{tabular*}
\vspace{0.2em}
\centering
\begin{tabular*}{\textwidth}{@{\extracolsep{\fill}}ccccccccccc} \hline
h & 
\multicolumn{2}{c}{$||p_p-p_{ph}||_{\Omega_p}$} & 
\multicolumn{2}{c}{$||u_p-u_{ph}||_{\Omega_p}$} & 
\multicolumn{2}{c}{$||\nabla\cdot(u_p-u_{ph})||_{\Omega_p}$} & 
\multicolumn{2}{c}{$||\sigma_p-\sigma_{ph}||_{\Omega_p}$} & 
\multicolumn{2}{c}{$||\nabla\cdot(\sigma_p-\sigma_{ph})||_{\Omega_p}$}\\ \hline
1/2 & 1.07e-01 & rate & 1.02e+00 & rate & 1.01e+00 & rate & 6.13e-01 & rate & 6.59e-01 & rate\\ 
1/4 & 4.48e-02 & 1.25 & 2.36e-01 & 2.11 & 5.51e-01 & 0.87 & 3.21e-01 & 0.93 & 3.45e-01 & 0.93\\ 
1/8 & 2.31e-02 & 0.96 & 6.09e-02 & 1.96 & 2.80e-01 & 0.98 & 1.37e-01 & 1.23 & 1.79e-01 & 0.95\\ 
1/16 & 1.16e-02 & 0.99 & 1.55e-02 & 1.97 & 1.40e-01 & 1.00 & 6.48e-02 & 1.08 & 9.00e-02 & 0.99\\ 
1/32 & 5.82e-03 & 1.00 & 3.91e-03 & 1.99 & 7.01e-02 & 1.00 & 3.20e-02 & 1.02 & 4.51e-02 & 1.00\\ 
1/64 & 2.91e-03 & 1.00 & 9.83e-04 & 1.99 & 3.50e-02 & 1.00 & 1.59e-02 & 1.00 & 2.26e-02 & 1.00\\ 
\bottomrule
\end{tabular*}
\end{table}

\subsection{Example 2}

In this and the subsequent examples, the interfaces $\Gamma_{ff}$ and $\Gamma_{pp}$ have non-zero measures. Here we use 4 subdomains as shown in the second panel in Figure~\ref{fig:four-examples}. For each subdomain, we start with an initial refinement of $2\times 2$. In Tables \ref{table4} and \ref{table5} we report the errors for $s_0 = 1,\ 10^{-3}$, respectively. The method exhibits growth in the number of iterations less than the rate of $O(h^{-1})$ as predicted by \eqref{e5.1} and we observe at least $O(h)$ convergence for all norms as predicted by Theorem \ref{theorem4.5}. Again, there is no change in the number of GMRES iterations and rates of convergence for different values of $s_0$.

\begin{table}
\caption{Convergence for 2x2 domain decomposition with each initial subdomain refinement 2x2 and $s_0 = 1$.}
\label{table4}
\begin{tabular*}{\textwidth}{@{\extracolsep{\fill}}cccccccccccc} 
\toprule
$h$ & dofs & 
\multicolumn{2}{c}{\# GMRES} & 
\multicolumn{2}{c}{$||\nabla(u_f-u_{fh})||_{\Omega_f}$} & 
\multicolumn{2}{c}{$||p_f-p_{fh}||_{\Omega_f}$} &
\multicolumn{2}{c}{$||\eta_p-\eta_{ph}||_{\Omega_p}$} & 
\multicolumn{2}{c}{$||\gamma-\gamma_{h}||_{\Omega_p}$}\\ \hline
1/2 & 46 & 46 & rate & 2.72e-01 & rate & 3.33e+00 & rate & 8.07e-01 & rate & 7.90e-01 & rate\\ 
1/4 & 90 & 89 & -0.95 & 1.10e-01 & 1.30 & 7.56e-01 & 2.14 & 4.24e-01 & 0.93 & 5.11e-01 & 0.63\\ 
1/8 & 178 & 165 & -0.89 & 2.91e-02 & 1.92 & 2.04e-01 & 1.89 & 2.11e-01 & 1.01 & 2.60e-01 & 0.98\\ 
1/16 & 354 & 297 & -0.85 & 7.45e-03 & 1.97 & 5.33e-02 & 1.93 & 1.04e-01 & 1.01 & 1.30e-01 & 1.00\\ 
1/32 & 706 & 484 & -0.70 & 1.88e-03 & 1.99 & 1.36e-02 & 1.97 & 5.20e-02 & 1.00 & 6.47e-02 & 1.00\\ 
1/64 & 1410 & 778 & -0.68 & 4.73e-04 & 1.99 & 3.44e-03 & 1.98 & 2.60e-02 & 1.00 & 3.23e-02 & 1.00\\ \hline
\end{tabular*}
\vspace{0.2em}
\begin{tabular*}{\textwidth}{@{\extracolsep{\fill}}ccccccccccc}\hline 
$h$ & 
\multicolumn{2}{c}{$||p_p-p_{ph}||_{\Omega_p}$} & 
\multicolumn{2}{c}{$||u_p-u_{ph}||_{\Omega_p}$} & 
\multicolumn{2}{c}{$||\nabla\cdot(u_p-u_{ph})||_{\Omega_p}$} & 
\multicolumn{2}{c}{$||\sigma_p-\sigma_{ph}||_{\Omega_p}$} & 
\multicolumn{2}{c}{$||\nabla\cdot(\sigma_p-\sigma_{ph})||_{\Omega_p}$}\\ \hline
1/2 & 1.03e-01 & rate & 1.01e+00 & rate & 1.01e+00 & rate & 6.12e-01 & rate & 6.59e-01 & rate\\ 
1/4 & 4.47e-02 & 1.20 & 2.35e-01 & 2.10 & 5.51e-01 & 0.87 & 3.21e-01 & 0.93 & 3.45e-01 & 0.93\\ 
1/8 & 2.31e-02 & 0.95 & 6.04e-02 & 1.96 & 2.80e-01 & 0.98 & 1.37e-01 & 1.23 & 1.79e-01 & 0.95\\ 
1/16 & 1.16e-02 & 0.99 & 1.54e-02 & 1.97 & 1.40e-01 & 1.00 & 6.48e-02 & 1.08 & 9.00e-02 & 0.99\\ 
1/32 & 5.82e-03 & 1.00 & 3.88e-03 & 1.99 & 7.01e-02 & 1.00 & 3.20e-02 & 1.02 & 4.51e-02 & 1.00\\ 
1/64 & 2.91e-03 & 1.00 & 9.74e-04 & 1.99 & 3.50e-02 & 1.00 & 1.59e-02 & 1.00 & 2.26e-02 & 1.00\\ 
\bottomrule
\end{tabular*}
\end{table}

\begin{table}
\caption{Convergence for 2x2 domain decomposition with each initial subdomain refinement 2x2 and $s_0 = 10^{-3}$.}
\label{table5}
\centering
\begin{tabular*}{\textwidth}{@{\extracolsep{\fill}}cccccccccccc} 
\toprule
$h$ & dofs & 
\multicolumn{2}{c}{\# GMRES} & 
\multicolumn{2}{c}{$||\nabla(u_f-u_{fh})||_{\Omega_f}$} & 
\multicolumn{2}{c}{$||p_f-p_{fh}||_{\Omega_f}$} &
\multicolumn{2}{c}{$||\eta_p-\eta_{ph}||_{\Omega_p}$} & 
\multicolumn{2}{c}{$||\gamma-\gamma_{h}||_{\Omega_p}$}\\ \hline
1/2 & 46 & 46 & rate & 2.72e-01 & rate & 3.32e+00 & rate & 8.07e-01 & rate & 7.90e-01 & rate\\ 
1/4 & 90 & 89 & -0.95 & 1.10e-01 & 1.31 & 7.54e-01 & 2.14 & 4.24e-01 & 0.93 & 5.11e-01 & 0.63\\ 
1/8 & 178 & 165 & -0.89 & 2.91e-02 & 1.92 & 2.03e-01 & 1.89 & 2.11e-01 & 1.01 & 2.60e-01 & 0.98\\ 
1/16 & 354 & 297 & -0.85 & 7.45e-03 & 1.97 & 5.33e-02 & 1.93 & 1.04e-01 & 1.01 & 1.30e-01 & 1.00\\ 
1/32 & 706 & 485 & -0.71 & 1.88e-03 & 1.99 & 1.36e-02 & 1.97 & 5.20e-02 & 1.00 & 6.47e-02 & 1.00\\ 
1/64 & 1410 & 785 & -0.69 & 4.73e-04 & 1.99 & 3.44e-03 & 1.99 & 2.60e-02 & 1.00 & 3.23e-02 & 1.00\\ 
\hline
\end{tabular*}
\vspace{0.2em}
\begin{tabular*}{\textwidth}{@{\extracolsep{\fill}}ccccccccccc} \hline
$h$ & 
\multicolumn{2}{c}{$||p_p-p_{ph}||_{\Omega_p}$} & 
\multicolumn{2}{c}{$||u_p-u_{ph}||_{\Omega_p}$} & 
\multicolumn{2}{c}{$||\nabla\cdot(u_p-u_{ph})||_{\Omega_p}$} & 
\multicolumn{2}{c}{$||\sigma_p-\sigma_{ph}||_{\Omega_p}$} & 
\multicolumn{2}{c}{$||\nabla\cdot(\sigma_p-\sigma_{ph})||_{\Omega_p}$}\\ \hline
1/2 & 1.05e-01 & rate & 1.02e+00 & rate & 1.01e+00 & rate & 6.12e-01 & rate & 6.59e-01 & rate\\ 
1/4 & 4.48e-02 & 1.24 & 2.36e-01 & 2.10 & 5.51e-01 & 0.87 & 3.21e-01 & 0.93 & 3.45e-01 & 0.93\\ 
1/8 & 2.31e-02 & 0.96 & 6.08e-02 & 1.96 & 2.80e-01 & 0.98 & 1.37e-01 & 1.23 & 1.79e-01 & 0.95\\ 
1/16 & 1.16e-02 & 0.99 & 1.55e-02 & 1.97 & 1.40e-01 & 1.00 & 6.48e-02 & 1.08 & 9.00e-02 & 0.99\\ 
1/32 & 5.82e-03 & 1.00 & 3.91e-03 & 1.99 & 7.01e-02 & 1.00 & 3.20e-02 & 1.02 & 4.51e-02 & 1.00\\ 
1/64 & 2.91e-03 & 1.00 & 9.82e-04 & 1.99 & 3.50e-02 & 1.00 & 1.59e-02 & 1.00 & 2.26e-02 & 1.00\\ 
\bottomrule
\end{tabular*}
\end{table}

\subsection{Example 3}

In this example the domain is $\Omega = (0,2)\times (0,1)$. We use 8 subdomains as shown in the third panel in Figure~\ref{fig:four-examples}.. For each subdomain, we start with an initial refinement of $2\times 2$. In Tables \ref{table6} and \ref{table7} we report the errors for $s_0 = 1,\ 10^{-3}$, respectively. The method exhibits growth in the number of iterations less than the predicted rate of $O(h^{-1})$ (see \eqref{e5.1}) and we observe at least $O(h)$ convergence for all norms as predicted by Theorem \ref{theorem4.5}. Furthermore, we note that there is no change in the rates of convergence for different values of $s_0$.

\begin{table}
\caption{Convergence for 4x2 domain decomposition with each initial subdomain refinement 2x2 and $s_0 = 1$.}
\label{table6}
\centering
\begin{tabular*}{\textwidth}{@{\extracolsep{\fill}}cccccccccccc} 
\toprule
$h$ & dofs & 
\multicolumn{2}{c}{\# GMRES} & 
\multicolumn{2}{c}{$||\nabla(u_f-u_{fh})||_{\Omega_f}$} & 
\multicolumn{2}{c}{$||p_f-p_{fh}||_{\Omega_f}$} &
\multicolumn{2}{c}{$||\eta_p-\eta_{ph}||_{\Omega_p}$} & 
\multicolumn{2}{c}{$||\gamma-\gamma_{h}||_{\Omega_p}$}\\ \hline
1/4 & 112 & 96 & rate & 1.03e-01 & rate & 5.86e-01 & - & 4.24e-01 & rate & 5.26e-01 & rate\\ 
1/8 & 216 & 172 & -0.84 & 2.71e-02 & 1.93 & 1.36e-01 & 2.11 & 2.16e-01 & 0.97 & 2.73e-01 & 0.95\\ 
1/16 & 424 & 319 & -0.89 & 6.91e-03 & 1.97 & 3.39e-02 & 2.00 & 1.08e-01 & 1.00 & 1.37e-01 & 1.00\\ 
1/32 & 840 & 567 & -0.83 & 1.74e-03 & 1.99 & 8.57e-03 & 1.98 & 5.38e-02 & 1.00 & 6.83e-02 & 1.00\\ 
1/64 & 1672 & 880 & -0.63 & 4.39e-04 & 1.99 & 2.23e-03 & 1.94 & 2.69e-02 & 1.00 & 3.41e-02 & 1.00\\ 
\hline
\end{tabular*}
\vspace{0.2em}
\begin{tabular*}{\textwidth}{@{\extracolsep{\fill}}ccccccccccc} \hline
$h$ & 
\multicolumn{2}{c}{$||p_p-p_{ph}||_{\Omega_p}$} & 
\multicolumn{2}{c}{$||u_p-u_{ph}||_{\Omega_p}$} & 
\multicolumn{2}{c}{$||\nabla\cdot(u_p-u_{ph})||_{\Omega_p}$} & 
\multicolumn{2}{c}{$||\sigma_p-\sigma_{ph}||_{\Omega_p}$} & 
\multicolumn{2}{c}{$||\nabla\cdot(\sigma_p-\sigma_{ph})||_{\Omega_p}$}\\ \hline
1/4 & 4.47e-02 & rate & 2.29e-01 & rate & 5.53e-01 & rate & 3.26e-01 & rate & 3.33e-01 & rate\\ 
1/8 & 2.31e-02 & 0.95 & 5.93e-02 & 1.95 & 2.80e-01 & 0.98 & 1.39e-01 & 1.23 & 1.74e-01 & 0.94\\ 
1/16 & 1.16e-02 & 0.99 & 1.51e-02 & 1.97 & 1.40e-01 & 1.00 & 6.57e-02 & 1.08 & 8.78e-02 & 0.98\\ 
1/32 & 5.82e-03 & 1.00 & 3.81e-03 & 1.99 & 7.01e-02 & 1.00 & 3.24e-02 & 1.02 & 4.40e-02 & 1.00\\ 
1/64 & 2.91e-03 & 1.00 & 9.56e-04 & 1.99 & 3.50e-02 & 1.00 & 1.61e-02 & 1.00 & 2.20e-02 & 1.00\\ 
\bottomrule
\end{tabular*}
\end{table}

\begin{table}
\caption{Convergence for 4x2 domain decomposition with each initial subdomain refinement 2x2 and $s_0 = 10^{-3}$.}
\label{table7}
\centering
    \begin{tabular*}{\textwidth}{@{\extracolsep{\fill}}cccccccccccc} 
\toprule
$h$ & dofs & 
\multicolumn{2}{c}{\# GMRES} & 
\multicolumn{2}{c}{$||\nabla(u_f-u_{fh})||_{\Omega_f}$} & 
\multicolumn{2}{c}{$||p_f-p_{fh}||_{\Omega_f}$} &
\multicolumn{2}{c}{$||\eta_p-\eta_{ph}||_{\Omega_p}$} & 
\multicolumn{2}{c}{$||\gamma-\gamma_{h}||_{\Omega_p}$}\\ \hline
1/4 & 112 & 96 & rate & 1.03e-01 & rate & 5.86e-01 & rate & 4.24e-01 & rate & 5.26e-01 & rate\\ 
1/8 & 216 & 172 & -0.84 & 2.71e-02 & 1.93 & 1.36e-01 & 2.10 & 2.16e-01 & 0.97 & 2.73e-01 & 0.95\\ 
1/16 & 424 & 319 & -0.89 & 6.90e-03 & 1.97 & 3.40e-02 & 2.00 & 1.08e-01 & 1.00 & 1.37e-01 & 1.00\\ 
1/32 & 840 & 565 & -0.82 & 1.74e-03 & 1.99 & 8.56e-03 & 1.99 & 5.38e-02 & 1.00 & 6.83e-02 & 1.00\\ 
1/64 & 1672 & 868 & -0.62 & 4.41e-04 & 1.98 & 2.27e-03 & 1.91 & 2.69e-02 & 1.00 & 3.41e-02 & 1.00\\ 
\hline
\end{tabular*}

\begin{tabular*}{\textwidth}{@{\extracolsep{\fill}}ccccccccccc} \hline
$h$ & 
\multicolumn{2}{c}{$||p_p-p_{ph}||_{\Omega_p}$} & 
\multicolumn{2}{c}{$||u_p-u_{ph}||_{\Omega_p}$} & 
\multicolumn{2}{c}{$||\nabla\cdot(u_p-u_{ph})||_{\Omega_p}$} & 
\multicolumn{2}{c}{$||\sigma_p-\sigma_{ph}||_{\Omega_p}$} & 
\multicolumn{2}{c}{$||\nabla\cdot(\sigma_p-\sigma_{ph})||_{\Omega_p}$}\\ \hline
1/4 & 4.47e-02 & rate & 2.30e-01 & rate & 5.53e-01 & rate & 3.26e-01 & rate & 3.33e-01 & rate\\ 
1/8 & 2.31e-02 & 0.96 & 5.96e-02 & 1.95 & 2.80e-01 & 0.98 & 1.39e-01 & 1.23 & 1.74e-01 & 0.94\\ 
1/16 & 1.16e-02 & 0.99 & 1.52e-02 & 1.97 & 1.40e-01 & 1.00 & 6.57e-02 & 1.08 & 8.78e-02 & 0.98\\ 
1/32 & 5.82e-03 & 1.00 & 3.84e-03 & 1.99 & 7.01e-02 & 1.00 & 3.24e-02 & 1.02 & 4.40e-02 & 1.00\\ 
1/64 & 2.91e-03 & 1.00 & 9.63e-04 & 1.99 & 3.50e-02 & 1.00 & 1.61e-02 & 1.00 & 2.20e-02 & 1.00\\ 
\bottomrule
\end{tabular*}

\end{table}

\subsection{Example 4}

In this example we use 4 subdomains with a nonmatching mesh along the Stokes-Biot interface as shown in the fourth panel in Figure~\ref{fig:four-examples}.. For the Stokes subdomains, we start with an initial grid of $3\times 3$ and for Biot with $2\times 2$. In Table \ref{table8} we report the errors. The method exhibits growth in the number of iterations less than the rate of $O(h^{-1})$ as predicted by \eqref{e5.1} and we observe at least $O(h)$ convergence for all norms as predicted by Theorem \ref{theorem4.5}.

\begin{table}
\caption{Convergence for 2x2 domain decomposition with each initial subdomain refinement 3x3 for Stokes and 2x2 for Biot subdomains, and $s_0 = 1$.}
\label{table8}
\centering
\begin{tabular*}{\textwidth}{@{\extracolsep{\fill}}cccccccccccc} 
\toprule
$h$ & dofs & 
\multicolumn{2}{c}{\# GMRES} & 
\multicolumn{2}{c}{$||\nabla(u_f-u_{fh})||_{\Omega_f}$} & 
\multicolumn{2}{c}{$||p_f-p_{fh}||_{\Omega_f}$} &
\multicolumn{2}{c}{$||\eta_p-\eta_{ph}||_{\Omega_p}$} & 
\multicolumn{2}{c}{$||\gamma-\gamma_{h}||_{\Omega_p}$}\\ \hline
1/2 & 50 & 50 & rate & 2.85e-01 & rate & 3.44e+00 & rate & 8.07e-01 & rate & 7.90e-01 & rate\\ 
1/4 & 98 & 92 & -0.88 & 6.85e-02 & 2.06 & 7.17e-01 & 2.26 & 4.24e-01 & 0.93 & 5.11e-01 & 0.63\\ 
1/8 & 194 & 168 & -0.87 & 1.89e-02 & 1.86 & 1.98e-01 & 1.86 & 2.11e-01 & 1.01 & 2.60e-01 & 0.98\\ 
1/16 & 386 & 308 & -0.87 & 4.94e-03 & 1.93 & 5.22e-02 & 1.92 & 1.04e-01 & 1.01 & 1.30e-01 & 1.0\\ 
1/32 & 770 & 499 & -0.70 & 1.26e-03 & 1.97 & 1.33e-02 & 1.97 & 5.20e-02 & 1.00 & 6.47e-02 & 1.00\\ 
1/64 & 1538 & 818 & -0.71 & 3.33e-04 & 1.92 & 3.41e-03 & 1.97 & 2.60e-02 & 1.00 & 3.23e-02 & 1.00\\ 
\hline
\end{tabular*}
\vspace{0.2em}
\begin{tabular*}{\textwidth}{@{\extracolsep{\fill}}ccccccccccc} \hline
$h$ & 
\multicolumn{2}{c}{$||p_p-p_{ph}||_{\Omega_p}$} & 
\multicolumn{2}{c}{$||u_p-u_{ph}||_{\Omega_p}$} & 
\multicolumn{2}{c}{$||\nabla\cdot(u_p-u_{ph})||_{\Omega_p}$} & 
\multicolumn{2}{c}{$||\sigma_p-\sigma_{ph}||_{\Omega_p}$} & 
\multicolumn{2}{c}{$||\nabla\cdot(\sigma_p-\sigma_{ph})||_{\Omega_p}$}\\ \hline
1/2 & 1.02e-01 & rate & 1.01e+00 & rate & 1.01e+00 & rate & 6.12e-01 & rate & 6.59e-01 & rate\\ 
1/4 & 4.47e-02 & 1.20 & 2.35e-01 & 2.10 & 5.51e-01 & 0.87 & 3.21e-01 & 0.93 & 3.45e-01 & 0.93\\ 
1/8 & 2.31e-02 & 0.95 & 6.04e-02 & 1.96 & 2.80e-01 & 0.98 & 1.37e-01 & 1.23 & 1.79e-01 & 0.95\\ 
1/16 & 1.16e-02 & 0.99 & 1.54e-02 & 1.97 & 1.40e-01 & 1.00 & 6.48e-02 & 1.08 & 9.00e-02 & 0.99\\ 
1/32 & 5.82e-03 & 1.00 & 3.88e-03 & 1.99 & 7.01e-02 & 1.00 & 3.20e-02 & 1.02 & 4.51e-02 & 1.00\\ 
1/64 & 2.91e-03 & 1.00 & 9.74e-04 & 1.99 & 3.50e-02 & 1.00 & 1.59e-02 & 1.00 & 2.26e-02 & 1.00\\
\bottomrule
\end{tabular*}

\end{table}

In summary, the numerical experiments show that the method exhibits number of GMRES iterations consistent with the theoretical results in the case of a normal interface operator. The discretization convergence rates for all variables in their corresponding norms are $O(h)$, as predicted by the theory for this choice of finite element spaces.


    


\begin{figure}
  \centerline{\includegraphics[width = \textwidth]{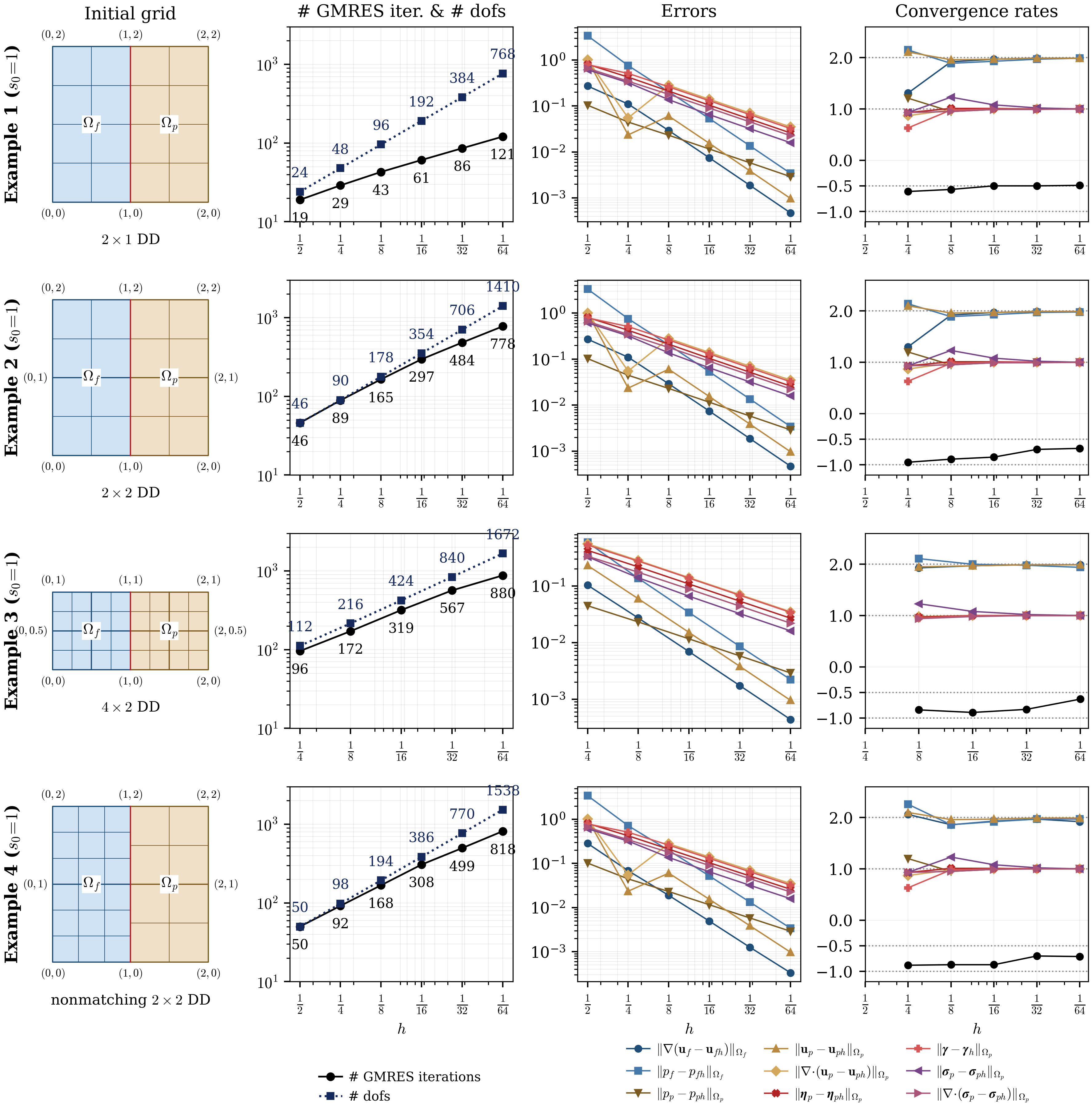}}
  \caption{Number of GMRES iterations, numerical errors, and convergence rates for Examples 1 to 4.}
  \label{fig:results}
\end{figure}

\section{Conclusions}\label{sec:concl}

We developed a non-overlapping domain decomposition method for the coupled Stokes-Biot model of fluid-poroelastic structure interaction. The method provides a scalable algorithm for the solution of the problem on massively parallel computers with distributed memory. By computing the Schur complement, the coupled system is reduced to an interface problem for the Lagrange multipliers that model the normal fluid stress on Stokes-Stokes interfaces, and the displacement and pressure on Stokes-Biot and Biot-Biot interfaces. The interface problem is solved with a Krylov space iterative method -- GMRES. Each iteration requires solving simpler single-physics Stokes or Biot subdomain problems. Furthermore, the method allows for non-matching subdomain grids at the Stokes-Biot interfaces. We proved that subdomain problems are well posed and that the interface operator is positive definite, and established spectral bounds to analyze the convergence of the interface GMRES. A series of numerical experiments illustrate the efficiency, accuracy, and robustness of the method.

Several extensions of the presented work are possible. These include an extension to a multiscale mortar framework \cite{arbogast2000mixed,APWY} allowing for coarse scale approximation of the Lagrange multipliers with improved computational efficiency, as well as non-matching Stokes-Stokes and Biot-Biot interfaces, as e.g. in \cite{ManuMortarBiot,StokesDarcyMortar,Kim-elast-BDDC}, the use of offline multiscale stress-flux basis \cite{ganis2009implementation,EldarElast,ManuMortarBiot}, the development of optimal interface preconditioners of balancing type \cite{pencheva2003balancing,flux-mortar-mpfa}, and the application of machine learning techniques for the solution of the subdomain problems \cite{ML-DD}. These will be investigated in a future work.

\bibliographystyle{abbrv}
\bibliography{references}
\end{document}